\begin{document}

\title{A Practical and Optimal First-Order Method for Large-Scale Convex Quadratic Programming}
\author{Haihao Lu\thanks{MIT, Sloan School of Management (haihao@mit.edu).} \and Jinwen Yang\thanks{University of Chicago, Department of Statistics (jinweny@uchicago.edu).}}

\date{}

\maketitle
 
\begin{abstract}
    Convex quadratic programming (QP) is an important class of optimization problem with wide applications in practice. The classic QP solvers are based on either simplex or barrier method, both of which suffer from the scalability issue because their computational bottleneck is solving linear equations. In this paper, we design and analyze a first-order method for QP, called restarted accelerated primal-dual hybrid gradient (rAPDHG), whose computational bottleneck is matrix-vector multiplication. We show that rAPDHG has a linear convergence rate to an optimal solution when solving QP, and the obtained linear rate is optimal among a wide class of primal-dual methods. Furthermore, we connect the linear rate with a sharpness constant of the KKT system of QP, which is a standard quantity to measure the hardness of a continuous optimization problem. Numerical experiments demonstrate that both restarts and acceleration can significantly improve the performance of the algorithm. Lastly, we present PDQP.jl, {an open-source solver based on rAPDHG that can be run on both GPU and CPU. With a numerical comparison with SCS and OSQP on standard QP benchmark sets and large-scale synthetic QP instances, we demonstrate the effectiveness of rAPDHG for solving QP.}
\end{abstract}

\section{Introduction}
Convex quadratic programming (QP) is a fundamental optimization problem class with wide applications in practice. For example, in finance, the Markowitz model serves as the cornerstone of contemporary portfolio optimization and is frequently solved as a QP in practical applications~\cite{markowitz1950theories,markowitz1967portfolio}. In statistical learning, Lasso~\cite{tibshirani1996regression}, trend filtering~\cite{kim2009ell_1}, support vector machine~\cite{cortes1995support}, among other models, are often formulated as a QP. In control, QP is an important element in applying model predictive control~\cite{bartlett2002quadratic}, just to name a few.

Classic QP solvers often use methods originally designed for solving linear programming (LP). For example, a variant of simplex method can be used on QP by taking advantage of the polytope nature of QP's feasible region~\cite{dantzig1998linear,ferreau2014qpoases}. In addition, an alternative method for QP solving involves interior-point methods (IPMs), which utilize barrier functions to penalize the linear constraints~\cite{vanderbei1999loqo,friedlander2012primal,andersen2003implementing}. Both the simplex and interior-point methods serve as foundational approaches in traditional QP solvers such as Gurobi~\cite{optimization2023gurobi} and MOSEK~\cite{aps2023mosek}, and they are quite reliable in providing solutions with high accuracy. More recently, there is a line of research on open-source solvers based on Alternating Direction Method of Multipliers (ADMM) for QP, such as SCS~\cite{o2016conic,o2021operator} and OSQP~\cite{stellato2020osqp}, which can also efficiently solve QP.

However, scaling up QP further using simplex methods or interior-point methods is highly challenging. The computational bottleneck of both methods is matrix factorization for solving linear systems, which has two significant drawbacks when solving large instances:  (1) Factorization is a sequential algorithm in nature, and it is highly challenging, if not impossible, to take advantage of parallel computing massively; (2) While the data in the objective and in the constraint may be very sparse, the factorization is usually much denser, and storing the factorization demands a considerable amount of memory. As a result, all of the commercial QP solvers are implemented on a single machine and are CPU-based, and cannot take advantage of modern computing architectures, such as GPUs and distributed computing. Furthermore, commercial solvers may often raise an ``out-of-memory'' error even though the problem instance can fit into memory when solving large instances. These concerns are also shared with ADMM-based solvers, which require the storage of one matrix factorization that can be repetitively reused. {ADMM-based solvers can also be matrix-free by using the conjugate gradient method for solving the linear equation to avoid matrix factorization, which is called the indirect ADMM method. Indirect ADMM methods are sometimes used to solve larger instances when the factorization is too expensive to be computed and/or stored, and they can be implemented on GPUs. Enhancements such as warm-start and preconditioning can be applied to the inner CG solver. However, they usually have inferior numerical performance to its factorization-based counterpart (see, for example, a comparison between SCS direct solve and indirect solve on LP~\cite{applegate2021faster}, and the experiments of QP in Section \ref{sec:PDQP}).}

To overcome such issues, there is a recent trend of research on using matrix-free first-order methods (FOMs) for scaling up LP~\cite{applegate2021practical,lin2021admm,deng2022new,basu2020eclipse}. In contrast to simplex and interior-point methods, the computational bottleneck of matrix-free FOMs is matrix-vector multiplication. Thanks to the recent development of deep learning, matrix-vector multiplication can scale very well on modern computing architectures such as GPUs and distributed computing. Furthermore, in terms of memory, FOMs only require storing the problem instance without storing any potentially denser factorizations, and thus, they can solve larger instances even just on a shared memory CPU machine. An example of this is the open-sourced LP solver PDLP~\cite{applegate2021practical,applegate2023faster,applegate2021infeasibility}. A distributed implementation of PDLP on 2000 machines solved an LP instance with 92 billion non-zeros in the constraint matrix, roughly 1000 times larger than commercial solvers' capability~\cite{blog}. A GPU implementation of PDLP (cuPDLP) has comparable performance as commercial solvers on medium-scale benchmark instances and has superior performance on larger instances~\cite{lu2023cupdlp,lu2023cupdlpc}.

{This paper aims to extend the success of PDLP to solving QP.
Fueled by the limitations encountered by simplex and IPMs and the recent theoretical development of FOMs, we pursue FOMs with the following four features that can be used as a base algorithm for a practical first-order solver:}
\begin{itemize}
    \item \textbf{Matrix-free}: The algorithm does not solve any linear equation, and the computational bottleneck is, at most, matrix-vector multiplication. 
    \item \textbf{Linear convergence}: The algorithm should converge to an optimal solution with at least a linear rate. This is because the users of solvers usually expect a high-accuracy solution, and a sublinear convergence rate is not enough to identify a high-accuracy solution.
    \item \textbf{Optimality}: We expect that the algorithm not only has linear convergence to an optimal solution, but the convergence rate also matches with an information-theoretical lower bound.
    \item \textbf{Strong numerical performance}: The algorithm should have reliable numerical performance, which can showcase evidence that a refined C or C++ implementation (with potential additional heuristics) may have comparable performance with state-of-the-art solvers on standard benchmark sets and superior performance on larger instances.
\end{itemize}

An example is the restarted primal-dual hybrid gradient (PDHG) algorithm for LP, which is the base algorithm in PDLP. The algorithm is matrix-free, achieves the optimal linear convergence rate for LP~\cite{applegate2023faster}, and has a strong numerical performance~\cite{applegate2021practical}. While restarted PDHG can be directly applied to QP, it is easy to see that it does not give the optimal linear convergence rate --  restarted gradient descent (a special case of restarted PDHG for unconstrained problems) is not an optimal algorithm for quadratic minimization (a special case of QP without constraints). Thus, restarted PDHG is not a desired FOM for QP. 

Of course, there can be multiple efficient FOMs for a class of optimization problems. We here aim to identify one of them. More formally, we consider the convex QP problem with the form
\begin{align}\label{eq:qp}
\begin{split}
    & \ \min_{x\in \mathbb R^n}\; \frac 12 x^TQx+c^Tx \\
    & \ \mathrm{\;s.t.\;}\; Ax\leq b \ ,
\end{split}
\end{align}
where objective vector $c\in \mathbb R^n$, right-hand-side $b\in \mathbb R^m$, constraint matrix $A\in \mathbb R^{m\times n}$ and objective matrix $Q\in\mathbb R^{n\times n}$ satisfying $Q\succeq 0$ due to convexity\footnote{Most of our theoretical and practical results in this paper can be extended to other forms of QP, and we utilize this form for the simplicity of theoretical development.}. Notice that it is genuinely hard to project onto the constraint set of \eqref{eq:qp}, we instead study the primal-dual formulation of the problem:
\begin{equation}\label{eq:minimax}
    \min_x\max_{y\geq 0}\; \mathcal L(x,y):= \frac 12 x^TQx+c^Tx+y^TAx-b^Ty \ .
\end{equation}
By duality theory in convex optimization, a saddle point to \eqref{eq:minimax} can recover an optimal primal-dual pair to the QP problem \eqref{eq:qp}; thus, we need to find a saddle point to \eqref{eq:minimax}.

How do we design an optimal FOM for QP? Intuitively, the matrix $Q$ is usually not full rank, and there are two orthogonal subspaces in the primal space: a linear subspace $\text{ker}(Q)$, and a quadratic subspace $\text{range}(Q)$. Along the linear subspace, the problem is essentially an LP, and restarted PDHG achieves the optimal rate. Along the quadratic subspace, the problem is quadratic; an optimal algorithm should utilize momentum/acceleration, for example, accelerated PDHG~\cite{chen2014optimal}. In this paper, we combine restarting and acceleration together, and propose restarted accelerated PDHG (rAPDHG, see Algorithm \ref{alg:rapd} for details) for solving QP, and show that it achieves an optimal linear convergence rate. There are a few technical challenges in the theoretical analysis:
\begin{itemize}
    \item Notice that QP is not strongly convex; thus, the accelerated PDHG algorithm~\cite{chen2014optimal} can only provide a sub-linear convergence rate. We show that restarting can help turn such sub-linear convergence rate into a linear convergence rate by utilizing the structure of QP.
    \item The linear convergence analysis of restarted PDHG for LP relies on the normalized duality gap being sharp for LP~\cite{applegate2023faster}. Unfortunately, the normalized duality gap, as defined in \cite{applegate2023faster}, is no longer sharp for QP due to the quadratic nature of the primal-dual gap. We here utilize a quadratic growth condition of a newly defined smoothed duality gap from \cite{fercoq2021quadratic} to overcome this issue.
    \item A careful analysis is proposed to ensure that the optimal convergence rates on the linear subspace and the quadratic subspace do not interfere with each other.
\end{itemize}

To demonstrate the numerical effectiveness of rAPDHG, we compare it with vanilla PDHG, restarted PDHG (without acceleration), and accelerated PDHG (without restarting), which showcase that both acceleration and restarting can significantly improve the performance, particularly for obtaining high-accuracy solutions. Furthermore, we develop a solver PDQP.jl based on rAPDHG and practical heuristics with both CPU and GPU implementations, and we demonstrate its superior performance for solving large-scale QP with an extensive numerical study.

The contributions of the paper can be summarized as follows:
\begin{itemize}
    \item We propose rAPDHG, a restarted accelerated primal-dual algorithm for solving convex QP.
    \item We show that the proposed algorithm enjoys linear convergence for solving convex QP, and we connect the convergence rate with the Hoffman constant of its KKT system, a quantity that is often used in characterizing the hardness of optimization problems.
    \item We present worst-case instances showing that the obtained linear convergence rate matches the information-theoretical lower bound for a wide class of first-order primal-dual algorithms. Thus, it is an optimal algorithm.
    \item We develop PDQP.jl, a QP solver in Julia based on rAPDHG and practical heuristics. We present an extensive numerical study of the proposed algorithm on a standard QP benchmark set, showcasing its strong performance compared with its FOM counterparts.
\end{itemize}

\subsection{Related literature}
{\bf Quadratic programming.} Quadratic programming, as a natural extension of linear programming, is a fundamental tool in engineering and operation research~\cite{bartlett2002quadratic,van2018large,rockafellar1987linear,markowitz1950theories,markowitz1967portfolio}. Two classic algorithms to solve QP problems are simplex methods~\cite{dantzig1998linear,ferreau2014qpoases} and interior-point methods~\cite{vanderbei1999loqo,friedlander2012primal,andersen2003implementing}. Based on these methods, commercial solvers such as Gurobi and MOSEK can provide reliable solutions with high accuracy. The success of simplex and IPMs depends on efficiently solving the associated linear systems, which makes them challenging to further scale up on modern computational architecture such as GPUs and distributed machines.

{\bf FOM-based solvers.} Recently, the research surges on first-order methods for solving large-scale optimization problems due to their low computational cost and ability of parallelization. Several solvers on LP have been developed and most of them can be extended to solve QP.

\begin{itemize}
    \item PDHG-based solvers: \href{https://github.com/google/or-tools/tree/stable/ortools/pdlp}{PDLP}~\cite{applegate2021practical}. PDLP is a general-purpose large-scale LP solver. The base algorithm of PDLP is a restarted PDHG algorithm~\cite{applegate2023faster}, and it leverages many practical algorithmic enhancements, such as presolving, preconditioning, adaptive restart, adaptive step size, on top of the base algorithm. One can show in theory that the base algorithm (restarted PDHG) is an optimal primal-dual algorithm for LP~\cite{applegate2023faster}. It currently has three implementations: a prototype implemented in Julia (\href{https://github.com/google-research/FirstOrderLp.jl}{FirstOrderLp.jl}), a production-level C++ implementation (open-sourced through \href{https://developers.google.com/optimization}{Google OR-Tools}), and an internal distributed version at Google.  Recently it is demonstrated that cuPDLP (\href{https://github.com/jinwen-yang/cuPDLP.jl}{cuPDLP.jl}/\href{https://github.com/COPT-Public/cuPDLP-C}{cuPDLP-C}), the GPU implementations of PDLP, has comparable performance as commercial solvers on medium-scale benchmark instances and has superior performance on larger instances~\cite{lu2023cupdlp,lu2023cupdlpc}. PDLP is completely matrix-free and is readily extended to tackle QP and other cone programs. Indeed, the C++ PDLP implementation also supports the solving of diagonal QP.
    \item Matrix-free IPM solvers: \href{https://github.com/leavesgrp/ABIP}{ABIP}~\cite{lin2021admm, deng2022new}. The core algorithm of ABIP is solving the homogeneous self-dual embedded cone programs via an interior-point method, and as a special case, it can solve LP and QP. ABIP utilizes multiple ADMM iterations instead of one Newton step to approximately minimize the log-barrier penalty function. A recent enhanced version of ABIP (named ABIP+~\cite{deng2022new}) includes many new enhancements, such as preconditioning, restart, hybrid parameter tuning, on top of ABIP. It was shown that ABIP+ (developed in C) has a comparable numerical performance on the LP benchmark sets to the Julia implementation of PDLP.
    \item ADMM-based solvers: \href{https://github.com/cvxgrp/scs}{SCS}~\cite{o2016conic,o2021operator} and \href{https://github.com/osqp/osqp}{OSQP}~\cite{stellato2020osqp}. SCS is designed to solve large-scale convex cone programs. It tackles the homogeneous self-dual embedding of general conic programming using ADMM. QP is solved as a special case of cone programming in the original version of SCS, and a recent update \cite{o2021operator} solves QP as linear complementarity problem by Douglas-Rachford splitting. OSQP is another popular general-purpose QP solver. In contrast to SCS, OSQP directly solves the original QP problem by ADMM without resorting to any embedding. The computational bottleneck of ADMM-based methods is solving a linear system with similar forms every iteration. There are two approaches for this: (i) Store the factorization of a matrix in memory and re-use it every iteration with minor change; (ii) Since the linear system is usually reasonably conditioned, unlike those in IPMs, one can use conjugate gradient method to solve the linear system in every iteration. SCS and OSQP support both solving approaches. The first approach still requires doing a factorization and thus suffers from the same scaling difficulties as IPMs. The second approach usually requires at least 20 conjugate gradient steps (i.e., matrix-vector multiplications) for every iteration and is not competitive compared with the PDHG-based algorithms as shown in LP~\cite{applegate2021practical}.
    \item {ALM-based solvers: \href{https://github.com/kul-optec/QPALM}{QPALM}~\cite{hermans2022qpalm}. QPALM is a solver for (possibly nonconvex) QP based on proximal augmented Lagrangian method. Though QPALM requires solving a sequence of linear systems, a fast linear algebra routines for the factorization is customized for QPALM to achieve strong numerical performances. Furthermore, R-linear convergence is also obtained theoretically.}
\end{itemize}

{\bf Restart scheme.} Restart is a technique to enhance the theoretical and practical performance without modification of base algorithms~\cite{pokutta2020restarting}. There have been extensive works on this scheme in smooth convex minimization~\cite{o2015adaptive, roulet2020sharpness}, nonsmooth convex minimization~\cite{freund2018new, yang2018rsg}, stochastic convex minimization~\cite{johnson2013accelerating, lin2015universal, tang2018rest}, and minimax optimization~\cite{applegate2023faster, lu2021s, zhao2022accelerated}. In the context of LP, \cite{applegate2023faster} proposes a fixed-frequency and an adaptive restart scheme for a large class of base primal-dual algorithms including PDHG and ADMM, and shows the restarted variant of base algorithms exhibits optimal linear convergence rate on LP. A restarted version of stochastic primal-dual methods is also discussed in \cite{lu2021s} for solving LP.

{\bf Momentum technique.} Momentum, or inertia, dates back to \cite{polyak1964some} and it is well-known to provide faster convergence rate. In particular, the original Polyak's method~\cite{polyak1964some} was shown to enjoy fast linear convergence rate on convex quadratic problems. Later, \cite{nesterov1983method} proposed a variant of momentum methods (which is now known as Nesterov's accelerated methods) and showed the optimality of this method for solving smooth convex problems~\cite{nesterov2003introductory,nemirovskii1985optimal,nemirovskij1983problem}. Since then, momentum-based algorithms have been extensively studied in optimization and machine learning~\cite{nesterov1983method,beck2009fast, kingma2014adam,sutskever2013importance,liu2020improved}, and many attempts explain the intuition behind momentum-based methods~\cite{su2014differential, bubeck2015geometric,allen2017linear}. Furthermore, \cite{chen2014optimal,goldstein2014fast,ouyang2015accelerated,xu2017accelerated} demonstrate the acceleration effect of momentum in the primal-dual settings. In particular, \cite{chen2014optimal} proposes an accelerated variant of PDHG while \cite{goldstein2014fast,ouyang2015accelerated} propose ADMM with momentum.

\subsection{Notations}
{Denote $z=(x,y)$ the primal-dual pair and let $\mathcal Z^*$ be the optimal solution set of \eqref{eq:minimax}.} Denote the ball centered at $z$ with radius $R$ as $B_R(z)=\{\hat z\;|\; \|\hat z-z\|\leq R\}$. Denote $\iota_{\mathcal C}$ the indicator function of set $\mathcal C$. For any $v\in \mathbb R^n$, let $[v]_+=[(v_1,...,v_n)]_+=([v_1]_+,...,[v_n]_+)$, where $[v_i]_+=\max\{v_i,0\}$, be the coordinate-wise positive part of a vector.

\section{Restarted Accelerated PDHG for Convex Quadratic Programming}\label{sec:RAPDHG}

In this section, we propose rAPDHG (Algorithm \ref{alg:rapd}) for solving convex quadratic programming \eqref{eq:minimax}, and present the linear convergence of the algorithm.

\begin{algorithm}[ht!]
        \SetAlgoLined
        {\bf Input:} Initial point $(x^{0,0},y^{0,0})$, step-size $\{(\beta_k,\theta_k,\eta_k,\tau_k)\}$, restart frequency $K$\;
            \Repeat{\textit{$(x^{n,0}, y^{n,0})$ converges}}{
            {\bf initialize the inner loop.} $(\bar x^{n,0},\bar y^{n,0}) \gets (x^{n,0}, y^{n,0})$\;
            \For{$k=0,...,K-1$}{
                $x_{md}^{n,k}=(1-\beta_k^{-1})\bar x^{n,k}+\beta_k^{-1}x^{n,k}$\;
                \vspace{0.075cm}
                $y^{n,k+1}=\mathrm{proj}_{\mathbb R^m_+}\left\{y^{n,k}+\tau_k(A(\theta_{k}(x^{n,k}-x^{n,k-1})+x^{n,k})-b)\right\}$\;
                \vspace{0.075cm}
                $x^{n,k+1}=x^{n,k}-\eta_k\pran{Qx_{md}^{n,k}+c+A^Ty^{n,k+1}}$\;
                \vspace{0.075cm}
                $\bar x^{n,k+1}=(1-\beta_k^{-1})\bar x^{n,k}+\beta_k^{-1}x^{n,k+1}$\;
                \vspace{0.075cm}
                $\bar y^{n,k+1}=(1-\beta_k^{-1})\bar y^{n,k}+\beta_k^{-1}y^{n,k+1}$\;
            }
        {\bf restart the outer loop.} $(x^{n+1,0}, y^{n+1,0})\gets (\bar x^{n,K},\bar y^{n,K})$, $n\gets n+1$\;
            }
        {\bf Output:} $(x^{n,0}, y^{n,0})$.
        \caption{Restarted accelerated PDHG (rAPDHG) on \eqref{eq:minimax}}
        \label{alg:rapd}
\end{algorithm}

Algorithm \ref{alg:rapd} presents our two-loop restarted accelerated PDHG. We start from an initial solution $(x^{0,0}, y^{0,0})$, a set of step-size parameters, and a restart frequency $K$. We denote $(x^{n,k},y^{n,k})$ the iterate at the $k$-th inner loop of the $n$-th outer loop and denote the running average of the $n$-th outer loop as $(\bar x^{n,k},\bar y^{n,k})$. The inner loop essentially runs the accelerated PDHG algorithm developed in \cite{chen2014optimal}, where at the high level, the primal variable $x^{n,k}$ is updated by an accelerated gradient descent step, and the dual variable $y^{n,k}$ is updated by a standard PDHG dual step. After running $K$ iterations, we terminate the inner loop and restart the next outer loop from the the running average of the current inner loop $(\bar x^{n,K},\bar y^{n,K})$.

In the rest of this section, we present computational guarantees of Algorithm \ref{alg:rapd}. We start by introducing two progress metrics that are essential in the follow-up analysis:
\begin{mydef}[Duality gap]
{Denote $z=(x,y)$ the primal-dual pair.} For any $z,\hat z\in \mathcal Z$, denote
    \begin{equation*}
        Q(z,\hat{z}) := \mathcal L(x,\hat y)-\mathcal L(\hat x,y) \ .
    \end{equation*}
We call $\max_{\hat z\in\mathcal Z} Q(z,\hat z)$ the duality gap at $z$.
\end{mydef}

The duality gap is a natural progress metric for studying primal-dual algorithms and has been used in many related literature. However, the duality gap is not a good progress metric for QP, because it often is infinity due to the potentially unbounded constraint set. To overcome this issue, \cite{applegate2023faster} proposes a normalized duality gap for studying LP, which normalizes the duality gap by the distance between $z$ and $\hat{z}$. The normalized duality gap is a natural progress metric for studying primal-dual algorithms with unbounded feasible regions.  Unfortunately, the normalized duality gap is not sharp for QP due to the existence of the quadratic term, and thus the analysis in \cite{applegate2023faster} cannot be used for analyzing QP. Dual to the normalized duality gap, \cite{fercoq2021quadratic} proposes a smoothed duality gap that penalizes the distance $z$ and $\hat{z}$, which is defined below:

\begin{mydef}[Smoothed duality gap \cite{fercoq2021quadratic}]
For any $\xi\geq 0$ and $z,\dot z\in \mathcal Z$, we define the smoothed duality gap at $z$ centered at $\dot z$ as
    \begin{equation*}
        G_{\xi}(z;\dot{z}) := \max_{\hat z\in \mathcal Z} \left\{Q(z,\hat{z})-\frac{\xi}{2}\| \hat z-\dot{z}\|^2\right\}=\max_{\hat z\in \mathcal Z}\left\{\mathcal L(x,\hat y)-\mathcal L(\hat x,y)-\frac{\xi}{2}\| \hat z-\dot{z}\|^2\right\} \ .
    \end{equation*}
\end{mydef}

Similar to the normalized duality gap, the smoothed duality gap always retains a finite value. Furthermore, \cite{fercoq2021quadratic} introduces the quadratic growth of smoothed duality gap, which plays a major role in their analysis of the linear convergence of PDHG~\cite{fercoq2021quadratic}. We here utilize this concept to study Algorithm \ref{alg:rapd}:

\begin{mydef}[\cite{fercoq2021quadratic}]
Suppose $\xi>0$. We say that the smoothed duality gap satisfies quadratic growth on set $\mathbb S$ if it holds for all $z^*\in \mathcal Z^*$ and any $z\in \mathbb S$ that 
    \begin{equation*}
        G_{\xi}(z;z^*) \geq \alpha_{\xi} \mathrm{dist}^2(z,\mathcal Z^*) \ ,
    \end{equation*}
    where $\alpha_\xi>0$.
\end{mydef}

It turns out that the smoothed duality gap of QP \eqref{eq:qp} satisfies quadratic growth on bounded sets, and we present a formal characterization of the parameter $\alpha_{\xi}$ for QP in Section \ref{sec:growth-qp}. 

The next theorem presents our major theoretical result of Algorithm \ref{alg:rapd} for QP. 

\begin{thm}[Linear Convergence]\label{thm:main}
Consider the sequence $\{z^{n,0}\}_{n=0}^\infty$ generated by rAPDHG (Algorithm \ref{alg:rapd}) with fixed restart frequency for solving \eqref{eq:minimax}. Suppose for any $\xi>0$, \eqref{eq:minimax} satisfies quadratic growth with parameter $\alpha_\xi$ on a ball $B_R(z^{0,0})$ with center $z^{0,0}$ and radius $R=\frac{3}{1-1/e}\mathrm{dist}(z^{0,0},\mathcal Z^*)$. For each inner iteration $0\leq k\leq K-1$, let 
\begin{equation*}
        \beta_k = \frac{k+2}{2},\; \theta_k=\frac{k}{k+1},\; \eta_k = \frac{k+1}{2\pran{\|Q\|+K\|A\|}},\; \tau_k = \frac{k+1}{2K\|A\|} \ .
\end{equation*}
Suppose the restart frequency $K$ satisfies
\begin{equation*}
    K\geq \max\left\{ \sqrt{\frac{32e^2\|Q\|}{\alpha_\xi}}, \frac{32e^2\|A\|}{\alpha_{\xi}}, \sqrt{\frac{64\|Q\|}{\xi}}, \frac{64\|A\|}{\xi},  \frac{\|Q\|}{\|A\|}  \right\} \ .
\end{equation*}
Then it holds for any outer iteration $n$ that
\begin{enumerate}
    \item[(i)] $\{z^{n,0}\}$ is bounded and stays in $B_R(z^{0,0})$:
    \begin{equation*}
        \|z^{n,0}-z^{0,0}\|\leq \frac{3}{1-1/e}\mathrm{dist}(z^{0,0},\mathcal Z^*) \ ,
    \end{equation*}
    \item[(ii)] $\{z^{n,0}\}$ converges linearly to an optimal solution:
    \begin{equation*}
        \mathrm{dist}(z^{n,0},\mathcal Z^*) \leq e^{-n}\mathrm{dist}(z^{0,0},\mathcal Z^*) \ . 
    \end{equation*}
\end{enumerate}  
\end{thm}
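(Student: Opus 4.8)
The plan is to establish (i) and (ii) simultaneously by induction on the outer counter $n$, the engine being a \emph{one-outer-loop contraction} $\mathrm{dist}(z^{n+1,0},\mathcal Z^*)\le e^{-1}\,\mathrm{dist}(z^{n,0},\mathcal Z^*)$. Fix $\xi>0$, let $\alpha_\xi$ be the quadratic-growth constant on $B_R(z^{0,0})$ guaranteed by hypothesis, and abbreviate $\delta_n:=\mathrm{dist}(z^{n,0},\mathcal Z^*)$ and $z^*_n:=\mathrm{proj}_{\mathcal Z^*}(z^{n,0})$. Throughout, the inner loop is exactly the accelerated PDHG of \cite{chen2014optimal}, whose per-iteration energy inequality I would record first.

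\textbf{Inner-loop guarantee.} Summing that energy inequality over $k=0,\dots,K-1$ against the weights induced by $\beta_k=\frac{k+2}{2}$ telescopes, after substituting the stated $\theta_k,\eta_k,\tau_k$ and using $K\ge\|Q\|/\|A\|$ (so that $\|Q\|+K\|A\|\le 2K\|A\|$), to a bound of the form
\[
 Q(\bar z^{n,K},\hat z)\ \le\ \frac{c_1\|Q\|}{K^2}\,\|x^{n,0}-\hat x\|^2+\frac{c_2\|A\|}{K}\,\|z^{n,0}-\hat z\|^2\qquad\text{for all }\hat z\in\mathcal Z,
\]
with absolute constants $c_1,c_2$. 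The curvature term inherits the $O(1/K^2)$ accelerated rate of the $\mathrm{range}(Q)$ (quadratic) directions, while the bilinear term only gets the $O(1/K)$ primal--dual rate of the $\mathrm{ker}(Q)$ (linear) directions; making the two coexist without mutual blow-up is precisely what the asymmetric, $k$-growing step sizes buy. Evaluated at $\hat z=z^*_n$ and combined with $Q(\,\cdot\,,z^*_n)\ge 0$, the same energy inequality also bounds the momentum iterates \emph{unconditionally}, $\|z^{n,k}-z^{n,0}\|\le c_3\,\delta_n$ for every $k$ (no quadratic growth invoked); since $\bar z^{n,K}$ is a convex combination of $z^{n,1},\dots,z^{n,K}$, this also gives $\|\bar z^{n,K}-z^{n,0}\|\le c_3\,\delta_n$.

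\textbf{Contraction.} Inserting $\|x^{n,0}-\hat x\|^2\le 2\delta_n^2+2\|\hat z-z^*_n\|^2$ (and the analogue for $\|z^{n,0}-\hat z\|^2$) into the inner-loop bound yields $Q(\bar z^{n,K},\hat z)\le\varepsilon_K\big(\delta_n^2+\|\hat z-z^*_n\|^2\big)$ with $\varepsilon_K:=\frac{2c_1\|Q\|}{K^2}+\frac{2c_2\|A\|}{K}$. The thresholds $K\ge\sqrt{64\|Q\|/\xi}$ and $K\ge 64\|A\|/\xi$ force $\varepsilon_K\le\xi/2$, so the $G_\xi$-penalty dominates the $\|\hat z-z^*_n\|^2$ term and $G_\xi(\bar z^{n,K};z^*_n)=\max_{\hat z}\{Q(\bar z^{n,K},\hat z)-\tfrac{\xi}{2}\|\hat z-z^*_n\|^2\}\le\varepsilon_K\delta_n^2$. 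Assuming $\bar z^{n,K}\in B_R(z^{0,0})$, quadratic growth at the center $z^*_n$ then gives $\alpha_\xi\,\delta_{n+1}^2\le G_\xi(\bar z^{n,K};z^*_n)\le\varepsilon_K\delta_n^2$, and the thresholds $K\ge\sqrt{32e^2\|Q\|/\alpha_\xi}$, $K\ge 32e^2\|A\|/\alpha_\xi$ make $\varepsilon_K/\alpha_\xi\le e^{-2}$, hence $\delta_{n+1}\le e^{-1}\delta_n$.

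\textbf{Closing the induction, and the main obstacle.} Suppose $\delta_j\le e^{-j}\delta_0$ and $z^{j,0}\in B_R(z^{0,0})$ for all $j\le n$. The unconditional inner-loop bound gives $\|z^{j+1,0}-z^{j,0}\|\le c_3\delta_j\le c_3 e^{-j}\delta_0$, so summing the triangle inequality along the restart points, $\|z^{n+1,0}-z^{0,0}\|\le c_3\sum_{j\ge0}e^{-j}\delta_0=\frac{c_3}{1-1/e}\delta_0\le\frac{3}{1-1/e}\delta_0=R$ once the constants are tracked to give $c_3\le 3$; thus $z^{n+1,0}=\bar z^{n,K}\in B_R(z^{0,0})$, the contraction step legitimately applies, and $\delta_{n+1}\le e^{-1}\delta_n\le e^{-(n+1)}\delta_0$ --- closing the induction and yielding (i) and (ii). I expect the real work to sit in the inner-loop guarantee: extracting from a single Lyapunov inequality the \emph{separated} rates $O(\|Q\|/K^2)$ and $O(\|A\|/K)$ under the prescribed step sizes --- so that acceleration along $\mathrm{range}(Q)$ does not destabilize the primal--dual coupling --- and casting it as a clean quadratic majorant of $Q(\bar z^{n,K},\hat z)$ valid for \emph{every} $\hat z$, the only form that lets the smoothed gap and quadratic growth be chained. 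A secondary nuisance is the circularity that quadratic growth requires the iterate in $B_R(z^{0,0})$ whereas boundedness of the iterates is itself a consequence of the contraction; it is broken by the unconditional iterate bound and the slack deliberately built into the radius $R$.
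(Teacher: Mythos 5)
Your proposal is correct and follows essentially the same route as the paper: an inner-loop bound $Q(\bar z^{n,K},\hat z)=O\bigl(\|Q\|/K^2+\|A\|/K\bigr)\|z^{n,0}-\hat z\|^2$ (the paper's Lemma 3, built on the Chen--Lan--Ouyang energy inequality), converted into $G_\xi(\bar z^{n,K};z^*_n)\lesssim(\|Q\|/K^2+\|A\|/K)\,\delta_n^2$ once $K$ is large enough that the $\xi/2$-penalty dominates, then chained with quadratic growth to get the per-epoch contraction, with the circularity broken exactly as you describe via the unconditional iterate bound ($c_3=3$ coming from the factor-$2$ distance bound of Lemma 3(iii) plus one triangle inequality). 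The only cosmetic difference is that you obtain the smoothed-gap bound by splitting $\|z^{n,0}-\hat z\|^2\le2\delta_n^2+2\|\hat z-z^*_n\|^2$ and absorbing the second term, whereas the paper maximizes the resulting quadratic in $\hat z$ exactly; both yield the same constants.
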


    Theorem \ref{thm:main} shows that we achieve an $\epsilon$-close solution to \eqref{eq:minimax} in the sense of distance to optimality after $$\mathcal O\pran{ \max\left\{ \sqrt{\frac{\|Q\|}{\alpha_{\xi}}}, \frac{\|A\|}{\alpha_{\xi}} , \sqrt{\frac{\|Q\|}{\xi}}, \frac{\|A\|}{\xi},  \frac{\|Q\|}{\|A\|}  \right\} \log\frac{\mathrm{dist}(z^{0,0},\mathcal Z^*)}{\epsilon} }$$ total iterations. Notice that the theoretical result holds for any parameter $\xi$, which only affects the restarting frequency in the algorithm. One can tune the value of $\xi$ to obtain a better complexity. Section \ref{sec:lower_bound} presents lower bound instances showing that such complexity is the best one can hope within a wide class of first-order primal-dual algorithms when the optimal $\xi$ is chosen.

The rest of this section presents the proof of Theorem \ref{thm:main}. We present three technical lemmas that will be used in the proof. These three lemmas only involve the inner iterations. For simplicity, we drop the counter $n$ of the outer loop in the presentation and proof of Lemma \ref{lem:apd}-\ref{lem:sublinear}.

First, we present a decay lemma for accelerated PDHG established in \cite{chen2014optimal}, where part (i) builds a telescoping bound of $Q(\bar z_{k+1},\hat z)$ while part (ii) basically shows the iterates of accelerated PDHG are bounded.
\begin{lem}\cite[Lemma 4.2 and 4.3]{chen2014optimal}\label{lem:apd}
Let $\bar z_{k+1}=(\bar x_{k+1},\bar y_{k+1})$ be the iterates generated by accelerated PDHG, {namely
\begin{equation*}
    \begin{aligned}
        & x^{md}_{k}=(1-\beta_k^{-1})\bar x_{k}+\beta_k^{-1}x_{k}\\
        & y_{k+1}=\mathrm{proj}_{\mathbb R^m_+}\left\{y_{k}+\tau_k(A(\theta_{k}(x_{k}-x_{k-1})+x_{k})-b)\right\}\\
        & x_{k+1}=x_{k}-\eta_k\pran{Qx^{md}_{k}+c+A^Ty_{k+1}}\\
        & \bar x_{k+1}=(1-\beta_k^{-1})\bar x_{k}+\beta_k^{-1}x_{k+1}\\
        & \bar y_{k+1}=(1-\beta_k^{-1})\bar y_{k}+\beta_k^{-1}y_{k+1} \ .
    \end{aligned}
\end{equation*}
}

Assume that the parameters $\beta_k$, $\theta_k$, $\eta_k$, $\tau_k$ are chosen such that
\begin{equation}\label{eq:param}
    \beta_0=1,\;\beta_{k+1}-1=\beta_k\theta_{k+1},\;0<\theta_{k+1}\leq \min\left\{ \frac{\eta_{k}}{\eta_{k+1}},\frac{\tau_{k}}{\tau_{k+1}} \right\},\; \frac{1}{\eta_k}-\frac{\|Q\|}{\beta_k}-4{\|A\|^2\tau_k} \geq 0 \ .
\end{equation}
Denote $\gamma_k=\theta_k^{-1}\gamma_{k-1}$, $\gamma_0=1$. Then 

(i) It holds for all $k\geq 0$ and any $\hat z=(\hat x,\hat y)\in\mathcal Z$ that
    \begin{align*}
    \begin{split}
        \beta_k\gamma_kQ(\bar z_{k+1},\hat z) & \ \leq \frac 12\sum_{i=0}^k \frac{\gamma_i}{\eta_i}\pran{\|\hat x -x_i\|^2-\|\hat x -x_{i+1}\|^2}+\frac{\gamma_i}{\tau_i}\pran{\|\hat y -y_i\|^2-\|\hat y -y_{i+1}\|^2} \\ 
        & \quad\ -\gamma_k\langle A(x_{k+1}-x_k),\hat y-y_{k+1}\rangle -\gamma_k\pran{\frac{1}{2\eta_k}-\frac{\|Q\|}{2\beta_k}}\|x_{k+1}-x_k\|^2 \ .
    \end{split}
    \end{align*}

(ii) It holds for all $k\geq 0$ and any $z^*=(x^*,y^*)\in \mathcal Z^*$ that
\begin{equation*}
        \|x^*-x^{k}\|^2+\frac 34\frac{\eta_k}{\tau_k}\|y^*-y^k\|^2 \leq \|x^*-x^{0}\|^2+\frac{\eta_k}{\tau_k}\|y^*-y^0\|^2 \ .
\end{equation*}
\end{lem}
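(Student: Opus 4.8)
This is the accelerated primal--dual decay estimate of Chen--Lan--Ouyang specialized to the QP Lagrangian, and the plan is to reprove it by the standard weighted-telescoping argument for momentum primal--dual schemes. The first step is a bookkeeping identity for the running averages: from $\beta_0=1$, $\beta_{k+1}-1=\beta_k\theta_{k+1}$ and $\gamma_k=\theta_k^{-1}\gamma_{k-1}$ one checks by induction that $\theta_i\gamma_i=\gamma_{i-1}$ and $\sum_{i=0}^{k}\gamma_i=\beta_k\gamma_k$, hence
$$\bar x_{k+1}=\frac{1}{\beta_k\gamma_k}\sum_{i=0}^{k}\gamma_i x_{i+1},\qquad \bar y_{k+1}=\frac{1}{\beta_k\gamma_k}\sum_{i=0}^{k}\gamma_i y_{i+1}\ ,$$
i.e. the averages are convex combinations of the iterates. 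Since $\mathcal L(\cdot,\hat y)$ is convex (because $Q\succeq 0$) and $\mathcal L(\hat x,\cdot)$ is affine, Jensen then gives $\beta_k\gamma_k\,Q(\bar z_{k+1},\hat z)\le \sum_{i=0}^{k}\gamma_i\big(\mathcal L(x_{i+1},\hat y)-\mathcal L(\hat x,y_{i+1})\big)$, so it suffices to control each summand.

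Next I would establish two one-step inequalities. Split $\mathcal L(x_{i+1},\hat y)-\mathcal L(\hat x,y_{i+1})$ into a primal piece $\frac12 x_{i+1}^TQx_{i+1}-\frac12\hat x^TQ\hat x+\langle c+A^Ty_{i+1},x_{i+1}-\hat x\rangle$ and a dual piece $\langle Ax_{i+1}-b,\hat y-y_{i+1}\rangle$. For the primal piece apply the three-point smoothness bound $q(u)-q(w)\le \langle Qv,u-w\rangle+\frac{\|Q\|}{2}\|u-v\|^2$ at $v=x^{md}_i$, then substitute the gradient-step identity $Qx^{md}_i+c+A^Ty_{i+1}=\eta_i^{-1}(x_i-x_{i+1})$ and the three-point identity for $\langle x_i-x_{i+1},x_{i+1}-\hat x\rangle$ to obtain $\frac{1}{2\eta_i}(\|\hat x-x_i\|^2-\|\hat x-x_{i+1}\|^2-\|x_i-x_{i+1}\|^2)$ plus a residual $\frac{\|Q\|}{2}\|x_{i+1}-x^{md}_i\|^2$; the midpoint weight $\beta_i^{-1}$ in $x^{md}_i$ is chosen exactly so that this residual (after $\gamma_i$-weighting and the averaging identities) collapses into the coefficient $\frac{\|Q\|}{2\beta_i}$ multiplying $\|x_{i+1}-x_i\|^2$. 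For the dual piece, since $y_{i+1}=\mathrm{proj}_{\mathbb R^m_+}\{y_i+\tau_i(A\tilde x_i-b)\}$ with $\tilde x_i:=\theta_i(x_i-x_{i-1})+x_i$ (the $i=0$ extrapolation being absent), the variational inequality characterizing the projection gives $\langle A\tilde x_i-b,\hat y-y_{i+1}\rangle\le \frac{1}{2\tau_i}(\|\hat y-y_i\|^2-\|\hat y-y_{i+1}\|^2-\|y_i-y_{i+1}\|^2)$, and the correction $\langle A(x_{i+1}-\tilde x_i),\hat y-y_{i+1}\rangle=\langle A(x_{i+1}-x_i),\hat y-y_{i+1}\rangle-\theta_i\langle A(x_i-x_{i-1}),\hat y-y_{i+1}\rangle$ produces the extrapolation cross terms.

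For part (i) I would multiply the combined one-step bound by $\gamma_i$ and sum $i=0,\dots,k$. The distance terms assemble directly into $\frac12\sum_i\frac{\gamma_i}{\eta_i}(\|\hat x-x_i\|^2-\|\hat x-x_{i+1}\|^2)+\frac12\sum_i\frac{\gamma_i}{\tau_i}(\|\hat y-y_i\|^2-\|\hat y-y_{i+1}\|^2)$. Using $\theta_i\gamma_i=\gamma_{i-1}$, the two families of cross terms telescope against each other, leaving precisely the boundary term $-\gamma_k\langle A(x_{k+1}-x_k),\hat y-y_{k+1}\rangle$ together with internal terms $\gamma_{i-1}\langle A(x_i-x_{i-1}),y_{i+1}-y_i\rangle$. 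Each internal cross term is split by Young's inequality across the negative quadratics $-\frac{\gamma_i}{2\eta_i}\|x_i-x_{i+1}\|^2$, $-\frac{\gamma_i}{2\tau_i}\|y_i-y_{i+1}\|^2$ and the midpoint residual; the parameter relations in \eqref{eq:param}, in particular $\frac{1}{\eta_i}-\frac{\|Q\|}{\beta_i}-4\|A\|^2\tau_i\ge 0$, are calibrated so that all of these cancel, with only $-\gamma_k\big(\frac{1}{2\eta_k}-\frac{\|Q\|}{2\beta_k}\big)\|x_{k+1}-x_k\|^2$ surviving. Combined with the Jensen reduction this is (i).

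For part (ii) I would take $\hat z=z^*\in\mathcal Z^*$ directly in the combined one-step bound (before Jensen): since $x^*$ minimizes $\mathcal L(\cdot,y^*)$ and $y^*$ maximizes $\mathcal L(x^*,\cdot)$, each left side $\mathcal L(x_{i+1},y^*)-\mathcal L(x^*,y_{i+1})$ is nonnegative and can be dropped. Summing the $\gamma_i$-weighted inequalities over $i=0,\dots,k-1$, and using that for the prescribed step sizes $\gamma_i/\eta_i$ and $\gamma_i/\tau_i$ are constant (in general they are nondecreasing under \eqref{eq:param}, so one invokes Abel summation together with their monotonicity), the distance terms telescope to $\frac{\gamma_0}{2\eta_0}\|x^*-x_0\|^2-\frac{\gamma_{k-1}}{2\eta_{k-1}}\|x^*-x_k\|^2$ and its $\tau$-analogue, while the cross terms telescope as before leaving only the last one $\gamma_{k-1}\langle A(x_k-x_{k-1}),y^*-y_k\rangle$ unabsorbed. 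Bounding this last term by Young's inequality so that it consumes a $\frac14$-fraction of $\frac{\gamma_{k-1}}{2\tau_{k-1}}\|y^*-y_k\|^2$ (and its $\|x_k-x_{k-1}\|^2$ part the leftover negative primal quadratic) is precisely what leaves the factor $\frac34$; rescaling by $2\eta_0$ and using $\eta_k/\tau_k=\eta_0/\tau_0$ yields $\|x^*-x_k\|^2+\frac34\frac{\eta_k}{\tau_k}\|y^*-y_k\|^2\le\|x^*-x_0\|^2+\frac{\eta_k}{\tau_k}\|y^*-y_0\|^2$. The hard part throughout is exactly this absorption step: verifying that the single scalar condition $\frac{1}{\eta_i}-\frac{\|Q\|}{\beta_i}-4\|A\|^2\tau_i\ge0$ together with $0<\theta_{i+1}\le\min\{\eta_i/\eta_{i+1},\tau_i/\tau_{i+1}\}$ is strong enough to simultaneously dominate the midpoint-smoothness residual $\|x_{i+1}-x^{md}_i\|^2$ and all the telescoped cross terms; everything else is routine convexity and three-point identities.
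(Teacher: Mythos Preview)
The paper does not prove this lemma; it is imported verbatim from \cite[Lemma~4.2 and~4.3]{chen2014optimal}. So there is no in-paper proof to compare against, but your sketch does contain a genuine gap relative to the Chen--Lan--Ouyang argument, and it is exactly at the point where acceleration enters.

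Your Jensen-first reduction $\beta_k\gamma_k\,Q(\bar z_{k+1},\hat z)\le\sum_i\gamma_i\big(\mathcal L(x_{i+1},\hat y)-\mathcal L(\hat x,y_{i+1})\big)$ is correct, and so is the three-point smoothness bound $q(x_{i+1})-q(\hat x)\le\langle Qx^{md}_i,x_{i+1}-\hat x\rangle+\tfrac{\|Q\|}{2}\|x_{i+1}-x^{md}_i\|^2$. The claim that fails is that the residual $\tfrac{\|Q\|}{2}\|x_{i+1}-x^{md}_i\|^2$ ``collapses into $\tfrac{\|Q\|}{2\beta_i}\|x_{i+1}-x_i\|^2$'' after $\gamma_i$-weighting. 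It does not: $x_{i+1}-x^{md}_i=(x_{i+1}-x_i)-(1-\beta_i^{-1})(\bar x_i-x_i)$ contains the accumulated average $\bar x_i$, and no averaging identity removes that second term. With Jensen applied first, the smoothness error is measured at $x_{i+1}$, which is not close to $x^{md}_i$; this is precisely the step where naive averaging loses the accelerated rate.

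The actual mechanism in \cite{chen2014optimal} bypasses Jensen on the quadratic part. One bounds $q(\bar x_{i+1})$ directly by smoothness at $x^{md}_i$, using the key identity $\bar x_{i+1}-x^{md}_i=\beta_i^{-1}(x_{i+1}-x_i)$, and combines it with convexity lower bounds at $\bar x_i$ and $\hat x$ to obtain the \emph{recursive} inequality
\[
q(\bar x_{i+1})-q(\hat x)\ \le\ (1-\beta_i^{-1})\big(q(\bar x_i)-q(\hat x)\big)+\beta_i^{-1}\langle Qx^{md}_i,x_{i+1}-\hat x\rangle+\frac{\|Q\|}{2\beta_i^{2}}\|x_{i+1}-x_i\|^2.
\]
Multiplying by $\beta_i\gamma_i$ and using $(\beta_i-1)\gamma_i=\beta_{i-1}\gamma_{i-1}$ makes the first right-hand term telescope against the left side at step $i-1$, and the smoothness residual becomes $\tfrac{\gamma_i\|Q\|}{2\beta_i}\|x_{i+1}-x_i\|^2$ --- the $\beta_i^{-1}$ you need. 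Everything else in your plan (the projection VI for the dual step, the extrapolation cross-term telescoping via $\theta_i\gamma_i=\gamma_{i-1}$, the Young-inequality absorption governed by $\tfrac{1}{\eta_i}-\tfrac{\|Q\|}{\beta_i}-4\|A\|^2\tau_i\ge0$, and the $\tfrac34$ in part~(ii) from leaving a quarter of the dual quadratic to eat the boundary cross term) is on the right track; you just need to swap the Jensen step for this recursive estimate on the quadratic part.
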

The following lemma derives the sublinear rate of the accelerated PDHG iterates for primal-dual gap with specific choices of parameters.
\begin{lem}\label{lem:stepsize}
    Let $\bar z_{k+1}=(\bar x_{k+1},\bar y_{k+1})$ be the iterates generated by accelerated PDHG. Suppose the total iteration count $K$ is fixed. For $0\leq k\leq K-1$, denote
    \begin{equation}\label{eq:param-choice}
        \beta_k = \frac{k+2}{2},\; \theta_k=\frac{k}{k+1},\; {\eta_k = \frac{k+1}{2\pran{\|Q\|+K\|A\|}},\; \tau_k = \frac{k+1}{2K\|A\|} \ .}
    \end{equation}
    Then
    \begin{enumerate}
        \item[(i)] Condition \eqref{eq:param} holds, and furthermore, $\theta_{k+1}=\frac{\eta_k}{\eta_{k+1}}=\frac{\tau_k}{\tau_{k+1}}$.
        \item[(ii)] It holds for any $\hat z\in\mathcal Z$ that
        \begin{equation*}
            Q(\bar z_{K},\hat z) \leq \pran{\frac{8\|Q\|}{K^2}+\frac{8\|A\|}{K}}\|z_0-\hat z\|^2 \ .
        \end{equation*}
        \item[(iii)] It holds for any $0\leq k\leq K-1$ with $K\geq \frac{\|Q\|}{\|A\|}$ that
        \begin{equation*}
            \frac{1}{4}\|z^k-z^{*}\|^2 \leq \|z^0-z^{*}\|^2 \ .
        \end{equation*}
    \end{enumerate}
\end{lem}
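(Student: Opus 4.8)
All three parts involve only the inner loop, so I will work with the accelerated PDHG iterates $z_k=(x_k,y_k)$ and their averages $\bar z_k$, dropping the outer counter as in the statement. Part (i) is a direct verification with the explicit step sizes. Plugging in $\beta_k=\tfrac{k+2}{2}$ gives $\beta_0=1$ and $\beta_{k+1}-1=\tfrac{k+1}{2}=\tfrac{k+2}{2}\cdot\tfrac{k+1}{k+2}=\beta_k\theta_{k+1}$. Since the constants $2(\|Q\|+K\|A\|)$ and $2K\|A\|$ cancel in the ratios $\eta_k/\eta_{k+1}$ and $\tau_k/\tau_{k+1}$, both equal $\tfrac{k+1}{k+2}=\theta_{k+1}$, which yields the ``furthermore'' claim and the middle condition in \eqref{eq:param}. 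For the last condition in \eqref{eq:param} I would split $\tfrac{1}{\eta_k}=\tfrac{2\|Q\|}{k+1}+\tfrac{2K\|A\|}{k+1}$, bound $\tfrac{2\|Q\|}{k+1}\ge\tfrac{2\|Q\|}{k+2}=\tfrac{\|Q\|}{\beta_k}$, and bound $\tfrac{2K\|A\|}{k+1}\ge\tfrac{2(k+1)\|A\|}{K}=4\|A\|^2\tau_k$ using $k+1\le K$; adding these two inequalities gives $\tfrac{1}{\eta_k}-\tfrac{\|Q\|}{\beta_k}-4\|A\|^2\tau_k\ge 0$.

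For part (ii), I would first solve the recursion $\gamma_k=\gamma_{k-1}/\theta_k$, $\gamma_0=1$, $\theta_k=\tfrac{k}{k+1}$, to get $\gamma_k=k+1$, hence $\beta_{K-1}\gamma_{K-1}=\tfrac{K(K+1)}{2}$; and the identities $\theta_{i+1}=\eta_i/\eta_{i+1}=\tau_i/\tau_{i+1}$ from part (i) make $\tfrac{\gamma_i}{\eta_i}\equiv\tfrac{1}{\eta_0}=2(\|Q\|+K\|A\|)$ and $\tfrac{\gamma_i}{\tau_i}\equiv\tfrac{1}{\tau_0}=2K\|A\|$ independent of $i$. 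Applying Lemma \ref{lem:apd}(i) at $k=K-1$, the two weighted sums then telescope to $\tfrac{1}{2\eta_0}(\|\hat x-x_0\|^2-\|\hat x-x_K\|^2)+\tfrac{1}{2\tau_0}(\|\hat y-y_0\|^2-\|\hat y-y_K\|^2)$. The residual $-\gamma_{K-1}\langle A(x_K-x_{K-1}),\hat y-y_K\rangle-\gamma_{K-1}\bigl(\tfrac{1}{2\eta_{K-1}}-\tfrac{\|Q\|}{2\beta_{K-1}}\bigr)\|x_K-x_{K-1}\|^2$ I would control by Young's inequality on the inner product with weight $4\tau_{K-1}$: the $\|x_K-x_{K-1}\|^2$ part it produces is absorbed by the quadratic term since $\tfrac{1}{2\eta_{K-1}}-\tfrac{\|Q\|}{2\beta_{K-1}}\ge 2\|A\|^2\tau_{K-1}$ (the last condition of \eqref{eq:param} at $k=K-1$, proved in part (i)), leaving only $\tfrac{1}{8\tau_0}\|\hat y-y_K\|^2$, which is dominated by the $-\tfrac{1}{2\tau_0}\|\hat y-y_K\|^2$ produced by the collapsed dual telescoping. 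Discarding the remaining nonpositive terms and dividing by $\tfrac{K(K+1)}{2}$ gives $Q(\bar z_K,\hat z)\le\tfrac{2}{K(K+1)}\bigl[(\|Q\|+K\|A\|)\|\hat x-x_0\|^2+K\|A\|\|\hat y-y_0\|^2\bigr]$; bounding the bracket by $(\|Q\|+K\|A\|)\|z_0-\hat z\|^2$ and using $\tfrac{1}{K(K+1)}\le\tfrac{1}{K^2}$ yields the claimed rate (in fact with constant $2$ in place of $8$, so there is room to spare).

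For part (iii), the observation is that $\eta_k/\tau_k=\tfrac{K\|A\|}{\|Q\|+K\|A\|}=:c$ is independent of $k$, and $c\ge\tfrac12$ once $K\ge\|Q\|/\|A\|$. Substituting $\eta_k/\tau_k=c$ into Lemma \ref{lem:apd}(ii) gives $\|x^*-x^k\|^2+\tfrac34 c\|y^*-y^k\|^2\le\|x^*-x^0\|^2+c\|y^*-y^0\|^2$. Since $\tfrac34 c\le 1$ and $c\le 1$, the left side is at least $\tfrac34 c\|z^k-z^*\|^2$ and the right side at most $\|z^0-z^*\|^2$, so $\|z^k-z^*\|^2\le\tfrac{4}{3c}\|z^0-z^*\|^2\le\tfrac83\|z^0-z^*\|^2\le 4\|z^0-z^*\|^2$, which is exactly $\tfrac14\|z^k-z^*\|^2\le\|z^0-z^*\|^2$.

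The genuinely delicate point is the residual term in part (ii): a plain Cauchy--Schwarz bound there leaves a $\|\hat y-y_K\|^2$ term which is \emph{not} controlled by $\|z_0-\hat z\|^2$, since $\hat z$ ranges over all of $\mathcal Z$. The right move is to notice that this residual must be balanced simultaneously against the leftover negative $\|x_K-x_{K-1}\|^2$ term (absorbed via the step-size condition) and the leftover negative $\|\hat y-y_K\|^2$ term (available precisely because the dual telescoping collapses exactly under the parameter choice of part (i)). Once this cancellation is spotted, the remainder is routine bookkeeping with the explicit step sizes.
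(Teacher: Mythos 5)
Your proposal is correct and follows essentially the same route as the paper: part (i) by direct verification of \eqref{eq:param} using $k+1\le K$, part (ii) by collapsing the telescoping sums in Lemma \ref{lem:apd}(i) via the constant ratios $\gamma_i/\eta_i$ and $\gamma_i/\tau_i$ and then absorbing the cross term by Young's inequality into the negative $\|\hat y-y_K\|^2$ and $\|x_K-x_{K-1}\|^2$ terms (the paper uses Young weight $\tau_{K-1}$ for an exact cancellation where you use $4\tau_{K-1}$ with slack, but both work), and part (iii) by substituting the constant ratio $\eta_k/\tau_k$ into Lemma \ref{lem:apd}(ii). The only blemishes are cosmetic (e.g., writing $\tau_0$ where $\tau_{K-1}$ appears after dividing by $\gamma_{K-1}$), and your sharper constant in (ii) is consistent with the paper's looser bound of $8$.
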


\begin{proof}
(i) It is straightforward to check the first two conditions in \eqref{eq:param} indeed hold for the choice of $\beta_k$ and $\theta_k$ in \eqref{eq:param-choice}. Note that the total iteration number $K$ is fixed and we have $\frac{\eta_k}{\eta_{k+1}}=\frac{\tau_k}{\tau_{k+1}}=\frac{k+1}{k+2}=\theta_{k+1}$, which in turn guarantees the third condition in \eqref{eq:param}. As for the final condition, with $0\leq k\leq K-1$,
\begin{equation*}
    \begin{aligned}
        \frac{1}{\eta_k}-\frac{\|Q\|}{\beta_k}-{4\|A\|^2\tau_k} = \frac{2\|Q\|}{(k+1)(k+2)}+2\|A\|\pran{\frac{K}{k+1}-\frac{k+1}{K}}\geq 0 \ .
    \end{aligned}
\end{equation*}

(ii)
By Lemma \ref{lem:apd}, we have
    \begin{align*}
        \begin{split}
            \beta_kQ(\bar z_{k+1},\hat z) & \ \leq \frac{1}{2\gamma_k}\left\{ \frac{\gamma_0}{\eta_0}\|\hat x-x_0\|^2+\sum_{i=0}^{k-1} \pran{\frac{\gamma_{i+1}}{\eta_{i+1}}-\frac{\gamma_i}{\eta_i}}\|\hat x-x_{i+1}\|^2-\frac{\gamma_k}{\eta_k} \|\hat x-x_{k+1}\|^2 \right\} \\
            & \quad\quad + \frac{1}{2\gamma_k}\left\{ \frac{\gamma_0}{\tau_0}\|\hat y-y_0\|^2+\sum_{i=0}^{k-1} \pran{\frac{\gamma_{i+1}}{\tau_{i+1}}-\frac{\gamma_i}{\tau_i}}\|\hat y-y_{i+1}\|^2-\frac{\gamma_k}{\tau_k} \|\hat y-y_{k+1}\|^2 \right\} \\
            & \quad\quad -\langle A(x_{k+1}-x_k),\hat y-y_{k+1}\rangle -\pran{\frac{1}{2\eta_k}-\frac{\|Q\|}{2\beta_k}}\|x_{k+1}-x_k\|^2 \ .
        \end{split}
    \end{align*}
    {Since $\frac{\gamma_{i+1}}{\gamma_i}=\frac{1}{\theta_{i+1}}=\frac{\eta_{i+1}}{\eta_i}=\frac{\tau_{i+1}}{\tau_i}$}, it holds that 
    \begin{align*}
        \begin{split}
            \beta_kQ(\bar z_{k+1},\hat z) & \ \leq \frac{1}{2\gamma_k}\frac{\gamma_0}{\eta_0}\|\hat x-x_0\|^2 + \frac{1}{2\gamma_k} \frac{\gamma_0}{\tau_0}\|\hat y-y_0\|^2-\frac{1}{2\tau_k} \|\hat y-y_{k+1}\|^2 \\
            & \quad\quad -\langle A(x_{k+1}-x_k),\hat y-y_{k+1}\rangle -\pran{\frac{1}{2\eta_k}-\frac{\|Q\|}{2\beta_k}}\|x_{k+1}-x_k\|^2\\
            & \ = \frac{1}{2\eta_k}\|\hat x-x_0\|^2 + \frac{1}{2\tau_k}\|\hat y-y_0\|^2-\frac{1}{2\tau_k} \|\hat y-y_{k+1}\|^2 \\
            & \quad\quad -\langle A(x_{k+1}-x_k),\hat y-y_{k+1}\rangle -\pran{\frac{1}{2\eta_k}-\frac{\|Q\|}{2\beta_k}}\|x_{k+1}-x_k\|^2\ , 
        \end{split}
    \end{align*}
    where the second inequality uses $\frac{\gamma_0}{\gamma_k}=\frac{\gamma_0}{\gamma_1}\frac{\gamma_1}{\gamma_2}...\frac{\gamma_{k-1}}{\gamma_k}=\frac{\eta_0}{\eta_k}=\frac{\tau_0}{\tau_k}$.
    
    Note that $-\langle A(x_{k+1}-x_k),\hat y-y_{k+1}\rangle \leq \frac{\tau_k\|A\|^2}{2}\|x_{k+1}-x_k\|^2+ \frac{1}{2\tau_k}\|\hat y-y_{k+1}\|^2$,
    \begin{align*}
        \begin{split}
            \beta_kQ(\bar z_{k+1},\hat z)             & \ \leq \frac{1}{2\eta_k}\|\hat x-x_0\|^2 + \frac{1}{2\tau_k}\|\hat y-y_0\|^2-\frac{1}{2\tau_k} \|\hat y-y_{k+1}\|^2 \\
            & \quad\quad +\frac{\tau_k\|A\|^2}{2}\|x_{k+1}-x_k\|^2+ \frac{1}{2\tau_k}\|\hat y-y_{k+1}\|^2 -\pran{\frac{1}{2\eta_k}-\frac{\|Q\|}{2\beta_k}}\|x_{k+1}-x_k\|^2\\
            & \ = \frac{1}{2\eta_k}\|\hat x-x_0\|^2 + \frac{1}{2\tau_k}\|\hat y-y_0\|^2 - \pran{\frac{1}{2\eta_k}-\frac{\|Q\|}{2\beta_k}-\frac{\tau_k\|A\|^2}{2}}\|x_{k+1}-x_k\|^2 \\ 
            & \ \leq \frac{1}{2\eta_k}\|\hat x-x_0\|^2 + \frac{1}{2\tau_k}\|\hat y-y_0\|^2 \ ,
        \end{split}
    \end{align*}
    where the last inequality follows from $\frac{1}{\eta_k}-\frac{\|Q\|}{\beta_k}-\tau_k\|A\|^2\geq 0$.
    Finally, set $k=K-1$ and we achieve
    \begin{align*}
        \begin{split}
            Q(\bar z_{K},\hat z) & \ \leq \frac{1}{2}\pran{\frac{1}{\beta_{K-1}\eta_{K-1}}\|\hat x-x_0\|^2+\frac{1}{\beta_{K-1}\tau_{K-1}}\|\hat y-y_0\|^2} \\
            & \ = 2\pran{\frac{\|Q\|+K\|A\|}{K(K+1)}+\frac{\|A\|}{K+1}}\|\hat z-z_0\|^2 \\
            & \ \leq 8\pran{\frac{\|Q\|}{K^2}+\frac{\|A\|}{K}}\|\hat z-z_0\|^2 \ .
        \end{split}
    \end{align*}    

    (iii) Plug the choice of stepsizes in \eqref{eq:param-choice} into part (ii) in Lemma \ref{lem:apd}. We have
    \begin{equation}\label{eq:norm-decay}
        \|x^{k}-x^*\|^2+\frac{3}{4}\frac{K\|A\|}{K\|A\|+\|Q\|}\|y^k-y^*\|^2 \leq \|x^{0}-x^*\|^2+\frac{K\|A\|}{K\|A\|+\|Q\|}\|y^0-y^*\|^2  \ .
    \end{equation}
    Note that $K\geq \frac{\|Q\|}{\|A\|}$ and thus $\frac{K\|A\|}{K\|A\|+\|Q\|}\geq \frac{1}{2}$. It holds that
    {\small
    \begin{equation*}
    \begin{aligned}
        \frac{1}{4}\|z^{k}-z^*\|^2&=\frac{1}{4}\|x^k-x^*\|^2+\frac 14 \|y^k-y^*\|^2\leq \|x^k-x^*\|^2 + \frac{3}{4}\frac{K\|A\|}{K\|A\|+\|Q\|}\|y^k-y^*\|^2\\
        & \leq \|x^{0}-x^*\|^2+\frac{K\|A\|}{K\|A\|+\|Q\|}\|y^0-y^*\|^2 \leq \|z^0-z^*\|^2 \ ,
    \end{aligned}
    \end{equation*}
    }
    where the second inequality follows from \eqref{eq:norm-decay}.
\end{proof}

The next lemma indicates that the smoothed duality gap exhibits the same sublinear convergence rate as the primal-dual gap.
\begin{lem}\label{lem:sublinear}
Let $\bar z_{k+1}=(\bar x_{k+1},\bar y_{k+1})$ be the iterates generated by accelerated PDHG. Let $\xi>0$. Suppose the total iteration count $K$ is fixed and satisfies
\begin{equation*}
    K\geq \max\left\{ \sqrt{\frac{64\|Q\|}{\xi}}, \frac{64\|A\|}{\xi}  \right\} \ .
\end{equation*}
Let $0\leq k\leq K-1$ and
\begin{equation*}
        \beta_k = \frac{k+2}{2},\; \theta_k=\frac{k}{k+1},\; {\eta_k = \frac{k+1}{2\pran{\|Q\|+K\|A\|}},\; \tau_k = \frac{k+1}{2K\|A\|} \ .}
\end{equation*}
Then it holds for any $\dot z\in\mathcal Z$ that
    \begin{equation*}
        G_{\xi}(\bar z_K;\dot{z})\leq 16\pran{\frac{\|Q\|}{K^2}+\frac{\|A\|}{K}}\|z_0-\dot{z}\|^2 \ .
    \end{equation*}
\end{lem}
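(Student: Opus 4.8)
The plan is to reduce the bound on the smoothed duality gap directly to the sublinear bound on the ordinary duality gap already established in Lemma~\ref{lem:stepsize}(ii), at the cost of a quadratic penalty that the hypothesis on $K$ is designed to absorb. First I would recall the definition
\begin{equation*}
    G_{\xi}(\bar z_K;\dot z) = \max_{\hat z\in\mathcal Z}\left\{ Q(\bar z_K,\hat z) - \frac{\xi}{2}\|\hat z-\dot z\|^2 \right\} \ ,
\end{equation*}
and plug in the estimate $Q(\bar z_K,\hat z)\leq C\,\|z_0-\hat z\|^2$ from Lemma~\ref{lem:stepsize}(ii), where $C := 8\|Q\|/K^2 + 8\|A\|/K$. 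Note this step is legitimate because Lemma~\ref{lem:stepsize}(i)--(ii) shows condition \eqref{eq:param} holds and gives the duality-gap bound for the prescribed step sizes with no extra requirement on $K$. This yields
\begin{equation*}
    G_{\xi}(\bar z_K;\dot z) \leq \max_{\hat z\in\mathcal Z}\left\{ C\,\|z_0-\hat z\|^2 - \frac{\xi}{2}\|\hat z-\dot z\|^2 \right\} \ .
\end{equation*}

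Next I would split the first term around the center $\dot z$ using Young's inequality, $\|z_0-\hat z\|^2 \leq 2\|z_0-\dot z\|^2 + 2\|\hat z-\dot z\|^2$, so that the expression under the max is at most $2C\,\|z_0-\dot z\|^2 + (2C - \tfrac{\xi}{2})\|\hat z-\dot z\|^2$. The key observation is that the hypothesis $K \geq \max\{\sqrt{64\|Q\|/\xi},\,64\|A\|/\xi\}$ forces $8\|Q\|/K^2 \leq \xi/8$ and $8\|A\|/K \leq \xi/8$, hence $C \leq \xi/4$ and therefore $2C - \tfrac{\xi}{2} \leq 0$. Dropping that nonpositive term and taking the supremum over $\hat z$ leaves only $2C\,\|z_0-\dot z\|^2 = 16\bigl(\|Q\|/K^2 + \|A\|/K\bigr)\|z_0-\dot z\|^2$, which is exactly the claimed inequality.

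There is no genuine obstacle here; the lemma is essentially a one-line consequence of Lemma~\ref{lem:stepsize}(ii) once one recenters the squared-distance term at $\dot z$. The only point requiring a moment of care is the bookkeeping of constants: one wants to allocate the budget $\xi/2$ so that both pieces of $C$ (the $\|Q\|$-part and the $\|A\|$-part) are individually dominated, which is precisely why the stated threshold on $K$ carries the factor $64$ rather than $32$, and it is this same factor that reappears in the final constant $16$. I would make sure the chain of inequalities is written so that the role of the $K$-hypothesis — guaranteeing $2C \le \xi/2$ — is explicit.
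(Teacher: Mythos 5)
Your proof is correct and follows essentially the same route as the paper: both start from the bound $Q(\bar z_K,\hat z)\leq 8\bigl(\|Q\|/K^2+\|A\|/K\bigr)\|\hat z-z_0\|^2$ of Lemma~\ref{lem:stepsize}(ii) and then use the hypothesis on $K$ to absorb the $-\tfrac{\xi}{2}\|\hat z-\dot z\|^2$ penalty. The only cosmetic difference is that you recenter via Young's inequality and discard the nonpositive coefficient of $\|\hat z-\dot z\|^2$, whereas the paper maximizes the concave quadratic in $\hat z$ exactly and then simplifies using $\xi-16\bigl(\|Q\|/K^2+\|A\|/K\bigr)\geq\xi/2$; both yield the same constant $16$.
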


\begin{proof}
By Lemma \ref{lem:stepsize}, we have
    \begin{align}\label{eq:eq-rhs}
        \begin{split}
            Q(\bar z_K,\hat z)-\xi \|\hat{z}-\dot{z}\|^2 & \ \leq 8\pran{\frac{\|Q\|}{K^2}+\frac{\|A\|}{K}}\|\hat z-z_0\|^2-\frac{\xi}{2} \|\hat{z}-\dot{z}\|^2\ .
        \end{split}
    \end{align}
    With the choice of $K\geq \max\left\{ \sqrt{\frac{64\|Q\|}{\xi}}, \frac{64\|A\|}{\xi}  \right\}$, we have $8\pran{\frac{\|Q\|}{K^2}+\frac{\|A\|}{K}}<\frac{\xi}{2}$ and thus the right-hand-side of \eqref{eq:eq-rhs} attain its maximum at $\hat z=\frac{\frac{\xi}{2}\dot{z}-8\pran{\frac{\|Q\|}{K^2}+\frac{\|A\|}{K}}z^0}{\frac{\xi}{2}-8\pran{\frac{\|Q\|}{K^2}+\frac{\|A\|}{K}}}$. Therefore,
    \begin{align*}
        \begin{split}
            Q(\bar z_K,\hat z)-\xi \|\hat{z}-\dot{z}\|^2 & \ \leq 8\pran{\frac{\|Q\|}{K^2}+\frac{\|A\|}{K}}\|\hat z-z_0\|^2-\frac{\xi}{2} \|\hat{z}-\dot{z}\|^2 \leq \frac{8\xi\pran{\frac{\|Q\|}{K^2}+\frac{\|A\|}{K}}}{\xi-16\pran{\frac{\|Q\|}{K^2}+\frac{\|A\|}{K}}}\|z_0-\dot{z}\|^2 \ .
        \end{split}
    \end{align*}
    
    Note that 
    \begin{equation*}
        \xi-16\pran{\frac{\|Q\|}{K^2}+\frac{\|A\|}{K}} \geq \frac{\xi}{2} \ ,
    \end{equation*}
    and therefore
    \begin{align*}
        \begin{split}
            G_{\xi}(\bar z_K;\dot{z})=\max_{\hat z\in \mathcal Z}\left\{Q(\bar z_K,\hat z)-\xi \|\hat{z}-\dot{z}\|^2\right\}\leq 16\pran{\frac{\|Q\|}{K^2}+\frac{\|A\|}{K}}\|z_0-\dot{z}\|^2 \ .
        \end{split}
    \end{align*}
\end{proof}

Now we are ready to prove Theorem \ref{thm:main}.
\begin{proof}[Proof of Theorem \ref{thm:main}]
We prove by induction. The results hold for $t=0$. Suppose it holds for $t\leq D$. Denote $z^*_t=\mathrm{argmin}_{z\in \mathcal Z^*}\|z^{t,0}-z\|^2$
\begin{align*}
    \begin{split}
        \|z^{D+1,0}-z^{0,0}\| & \leq \sum_{t=0}^D \|z^{t+1,0}-z^{t,0}\| \leq \sum_{t=0}^D \|z^{t+1,0}-z^*_t\|+\|z^*_t-z^{t,0}\|\leq \sum_{t=0}^D 3\|z^*_t-z^{t,0}\| \\ 
        & = \sum_{t=0}^D 3\mathrm{dist}(z^{t,0},\mathcal Z^*) \leq 3\sum_{t=0}^D e^{-t}\mathrm{dist}(z^{0,0},\mathcal Z^*) \leq \frac{3}{1-1/e}\mathrm{dist}(z^{0,0},\mathcal Z^*) \ ,
    \end{split}
\end{align*}
where the third inequality uses part (iii) in Lemma \ref{lem:stepsize} by noticing that $K\geq \tfrac{\|Q\|}{\|A\|}$. This implies that $z^{D+1,0}$ is in set $B_{R}(z^{0,0})$ and thus $\alpha_{\xi}$ is a valid constant of quadratic growth for iterates $z^{D+1,0}$. Let $z^*=\mathrm{argmin}_{z\in \mathcal Z^*}\|z^{D,0}-z\|^2$ and we have
    \begin{align*}
        \mathrm{dist}^2(z^{D+1,0},\mathcal Z^*) &  \leq \frac{1}{\alpha_{\xi}} G_{\xi}(z^{D+1,0};z^*)=\frac{1}{\alpha_{\xi}} G_{\xi}(z^{D,K};z^*) \\ 
        & \ \leq \frac{16}{\alpha_{\xi}}\pran{\frac{\|Q\|}{K^2}+\frac{\|A\|}{K}}\|z^{D,0}-z^*\|^2 \\ 
        & \ = \frac{16}{\alpha_{\xi}}\pran{\frac{\|Q\|}{K^2}+\frac{\|A\|}{K}}\mathrm{dist}^2(z^{D,0},\mathcal Z^*) \\ 
        & \ \leq \frac{1}{e^2} \mathrm{dist}^2(z^{D,0},\mathcal Z^*) \\ 
        & \ \leq e^{-2(D+1)}\mathrm{dist}^2(z^{0,0},\mathcal Z^*) \ ,
    \end{align*}
    where the first inequality exploits the definition of quadratic growth and the second one follows from Lemma \ref{lem:sublinear}. The second equality is due to the choice of $z^*=\mathrm{argmin}_{z\in \mathcal Z^*}\|z^{D,0}-z\|^2$ and the third inequality uses the choice of $K$. By induction we derive the boundedness and linear convergence of $\{z^{n,0}\}$.
\end{proof}

\section{Understanding $\alpha_\xi$ via the KKT residual of QP}\label{sec:growth-qp}
Section \ref{sec:RAPDHG} shows that the performance of rAPDHG relies on the quadratic growth constant $\alpha_\xi$. In this section, we build the connection between growth constant $\alpha_\xi$ and the KKT residual of QP, which is often used to characterize the hardness of an optimization problem (for example, a similar quantity is used to characterize the performance of different FOMs for solving LP~\cite{applegate2023faster}).

The KKT conditions provide an optimality certificate of a solution in continuous optimization. The KKT condition of QP problem \eqref{eq:qp} is
\begin{equation}\label{eq:KKT}
    \begin{aligned}
        & \text{(primal feasibility):} &&~ Ax\le b, \\
        & \text{(dual feasibility):} && ~Qx+c+A^Ty=0, \\
        & \text{(complementary slackness):} &&~ y_i(b-Ax)_i =0 ,\ \forall i =1,...,m.
    \end{aligned}
\end{equation}
In other words, if a primal-dual solution pair $z=(x,y)$ satisfies \eqref{eq:KKT}, it is an optimal primal-dual solution pair. Notice that the complementary slackness is quadratic, which sometimes can be problematic in deriving theoretical results. On the other hand, it is straight-forward to check that the complementary slack is indeed equivalent to the following condition that avoids the quadratic formulation:
\begin{equation}\label{eq:fixedpoint}
    y-\left[y-\eta(b-Ax)\right]_+=0 \ , \text{~for any ~} \eta>0\ .
\end{equation}
\eqref{eq:fixedpoint} essentially states that $y$ is a fixed point to one iteration of dual projected gradient descent step with step-size $\eta$. 

Next, we introduce a scaled version of KKT condition with \eqref{eq:fixedpoint} that plays a major role in our analysis in this section:

\begin{mydef}
    We call
    \begin{equation*}
        F_\xi(z) = F_\xi(x,y) = \begin{pmatrix} [Ax-b]_+ \\ \pran{I+\frac{1}{\xi}Q}^{-1/2} (Qx+c+A^Ty) \\ y-\left[y-\frac{1}{\xi}(b-Ax)\right]_+   \end{pmatrix} \ .
    \end{equation*}
    the scaled KKT residual of \eqref{eq:qp}.
\end{mydef}

The scaled KKT residual $F_\xi(z)$ provides an optimality proxy of the solution $z$, namely, $z=(x,y)$ is an optimal primal-dual pair of \eqref{eq:qp} if and only if $F_\xi(z)=0$, and generally speaking, the small the value of $F_\xi(z)$, the closer $z$ to an optimal solution. Notice that every entry in $F_\xi(z)$ is a piecewise linear function in $z$, thus the $\ell_2$ norm of $F_\xi(z)$ is a sharp function, name, there exists $\gamma_\xi>0$ such that it holds for any $z$ with $\|z\|\leq R$ that
\begin{align}\label{eq:gamma}
    \gamma_{\xi}\mathrm{dist}(z,\mathcal Z^*) & \leq \|F_{\xi}(z)\|= \sqrt{\|[Ax-b]_+\|^2+\|Qx+c+A^Ty\|_{\pran{I+\frac{1}{\xi}Q}^{-1}}^2+\left\|y-\left[y-\frac{1}{\xi}(b-Ax)\right]_+\right\|^2} \ . 
\end{align}

In the definition of $F_\xi(z)$, we rescale the dual infeasibility with matrix $(I+\frac{1}{\xi}Q)^{-1}$, and utilizes $\eta=\frac{1}{\xi}$ in \eqref{eq:fixedpoint}. Both choices are natural when building up the connection between $\gamma_\xi$ and the quadratic growth constant $\alpha_\xi$, as stated in the following theorem:

\begin{thm}\label{thm:qp-qg}
    For any $\xi>0$, the smoothed duality gap of \eqref{eq:minimax} satisfies quadratic growth on $\mathcal Z\cap B_R(0)$ with constant $\frac{\gamma_\xi^2}{4\xi+2\|Q\|+3/\xi}>0$,
    where $\gamma_\xi$ is defined in \eqref{eq:gamma}. Namely, it holds for any $z\in\mathcal Z$ with $\|z\|\leq R$ that
    \begin{equation*}
        \frac{\gamma_\xi^2}{4\xi+2\|Q\|+3/\xi}\mathrm{dist}^2(z,\mathcal Z^*)\leq G_{\xi}(z;z^*) \ .
    \end{equation*}
\end{thm}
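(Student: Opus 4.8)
The plan is to reduce the statement, via the sharpness bound \eqref{eq:gamma}, to a pointwise lower bound on the smoothed gap by the scaled KKT residual, and then to establish that lower bound by evaluating the smoothed gap at a test point anchored at $z^*$.

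Since \eqref{eq:gamma} gives $\|F_\xi(z)\|^2\ge\gamma_\xi^2\,\mathrm{dist}^2(z,\mathcal Z^*)$ for all $z$ with $\|z\|\le R$, it suffices to prove that for all such $z$ and all $z^*\in\mathcal Z^*$,
\[
 G_\xi(z;z^*)\ \ge\ \frac{\big\|[Ax-b]_+\big\|^2+\big\|Qx+c+A^Ty\big\|_{(I+\frac1\xi Q)^{-1}}^2+\big\|y-[y-\frac1\xi(b-Ax)]_+\big\|^2}{4\xi+2\|Q\|+3/\xi}\ ,
\]
because then $\frac{\gamma_\xi^2}{4\xi+2\|Q\|+3/\xi}\mathrm{dist}^2(z,\mathcal Z^*)\le\frac{\|F_\xi(z)\|^2}{4\xi+2\|Q\|+3/\xi}\le G_\xi(z;z^*)$. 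To build this lower bound, write $w=x-x^*$, $q=y-y^*$, $d=Qx+c+A^Ty$, $g=Ax-b$. Since $z^*$ satisfies the KKT system \eqref{eq:KKT}, one has the two identities $d=Qw+A^Tq$ and $Q(z,z^*)=\tfrac12\|w\|_Q^2+y^T(b-Ax^*)$, and, crucially, both summands of the latter are nonnegative (the second because $y\ge 0$ and $Ax^*\le b$). For any admissible test point $\hat z=(x^*+\delta_x,\,y^*+\delta_y)$ (i.e. $y^*+\delta_y\ge 0$), expanding $Q(z,\hat z)$ about $z^*$ and using $G_\xi(z;z^*)\ge Q(z,\hat z)-\tfrac\xi2\|\hat z-z^*\|^2$ yields the master inequality
\[
 G_\xi(z;z^*)\ \ge\ Q(z,z^*)\;+\;\Big(\delta_y^Tg-\tfrac\xi2\|\delta_y\|^2\Big)\;+\;\Big(-\delta_x^TA^Tq-\tfrac12\delta_x^TQ\delta_x-\tfrac\xi2\|\delta_x\|^2\Big)\ .
\]

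From here I would engineer $(\delta_x,\delta_y)$ so that the three residual terms appear. For the primal correction I take $\delta_x$ proportional to $-(I+\tfrac1\xi Q)^{-1}d$ (equivalently $-(I+\tfrac1\xi Q)^{-1}A^Tq$, the difference being absorbed by $\tfrac12\|w\|_Q^2$) with step size of order $(\|Q\|+\xi)^{-1}$: then $-\delta_x^TA^Tq=-\delta_x^T(d-Qw)$ produces a positive multiple of $\|d\|_{(I+\frac1\xi Q)^{-1}}^2$ together with a cross term $\propto\langle Qw,(I+\tfrac1\xi Q)^{-1}d\rangle$, which is split by Young's inequality so that its $\|w\|_Q^2$-content is exactly cancelled against the $\tfrac12\|w\|_Q^2$ in $Q(z,z^*)$, while the second-order pieces are controlled by the operator bounds $(I+\tfrac1\xi Q)^{-1}Q(I+\tfrac1\xi Q)^{-1}\preceq\min\{\|Q\|,\xi\}(I+\tfrac1\xi Q)^{-1}$ and $(I+\tfrac1\xi Q)^{-2}\preceq(I+\tfrac1\xi Q)^{-1}$. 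For the dual correction I would use a coordinatewise choice of $\delta_y$: $\delta_{y,i}=g_i/\xi$ where $g_i/\xi\ge-y_i^*$, and $\delta_{y,i}=-y_i^*$ (i.e.\ $\hat y_i=0$) otherwise; on the boundary coordinates $y_i^*>0$ forces $(Ax^*-b)_i=0$ by complementary slackness, so the matching part of $y^T(b-Ax^*)$ drops out and what remains is still nonnegative. A case split on the signs of $g_i$ and of $y_i+g_i/\xi$ then identifies the dual contribution — together with the leftover part of $y^T(b-Ax^*)$, and with any slack in the primal estimate — with the primal-infeasibility and complementarity residuals $\|[Ax-b]_+\|^2$ and $\|y-[y-\frac1\xi(b-Ax)]_+\|^2$. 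Finally one tunes the free scalars and the Young weights so the surviving constants assemble into $4\xi+2\|Q\|+3/\xi$.

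The main difficulty is that the penalty $\tfrac\xi2\|\hat z-z^*\|^2$ prohibits centering $\hat z$ at a proximal step of $z$ (which would make the residuals drop out at once), forcing the test point to be built around the \emph{arbitrary} optimal point $z^*$. The master inequality then unavoidably contains terms such as $\langle A(x-x^*),\,y-y^*\rangle$ that are not bounded by $\mathrm{dist}(z,\mathcal Z^*)$, and the only available mechanism to neutralize them is exact cancellation against the nonnegative parts $\tfrac12\|x-x^*\|_Q^2$ and $y^T(b-Ax^*)$ of $Q(z,z^*)$ guaranteed by the KKT conditions of $z^*$. Keeping these cancellations exact, deciding how much of $\tfrac12\|x-x^*\|_Q^2$ to spend on killing the cross term versus on the $\|d\|^2$ estimate, and getting the constants to collapse to exactly $4\xi+2\|Q\|+3/\xi$ is the delicate bookkeeping; the coordinatewise dual analysis is elementary but must handle the projection onto $\mathbb R^m_+$ and the complementary-slackness alternative with care.
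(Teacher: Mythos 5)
Your master inequality is correct (with $\hat z=z^*+(\delta_x,\delta_y)$ one indeed gets $Q(z,\hat z)=Q(z,z^*)+\delta_y^Tg-\delta_x^TA^Tq-\tfrac12\delta_x^TQ\delta_x$ and $Q(z,z^*)=\tfrac12\|w\|_Q^2+y^T(b-Ax^*)\ge 0$), and your overall mechanism --- lower-bound $G_\xi(z;z^*)$ by a test point anchored at $z^*$, then invoke \eqref{eq:gamma} --- is the same one the paper uses; the paper simply computes the maximum over $\hat z$ in closed form (which corresponds to $\delta_y=[y^*+\tfrac1\xi g]_+-y^*$ and $\delta_x=-\tfrac1\xi(I+\tfrac1\xi Q)^{-1}A^Tq$) and splits $G_\xi((x,y);z^*)=G_\xi((x,y^*);z^*)+G_\xi((x^*,y);z^*)$ (Lemma \ref{lem:pd=p-d}). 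The genuine gap is in your reduction step: you reduce the theorem to $G_\xi(z;z^*)\ge\|F_\xi(z)\|^2/(4\xi+2\|Q\|+3/\xi)$ with the KKT residual evaluated at the \emph{actual} point $z=(x,y)$. That is not what the paper proves, and it is a strictly harder target with that constant. The paper applies the sharpness bound at the two \emph{mixed} points $(x,y^*)$ and $(x^*,y)$, and what the smoothed gap naturally controls are the five mixed residuals with constants $2\xi,\,2\|Q\|,\,2/\xi,\,2\xi,\,1/\xi$ (Lemma \ref{lem:residual}), which sum to exactly $4\xi+2\|Q\|+3/\xi$. In particular, for the dual residual the gap controls $\tfrac12\|w\|_Q^2$ (hence $\|Qx+c+A^Ty^*\|^2\le 2\|Q\|G$) and $\|Qx^*+c+A^Ty\|^2_{(I+\frac1\xi Q)^{-1}}\le 2\xi G$ \emph{separately}; recombining them into the genuine-point quantity $\|Qw+A^Tq\|^2_{(I+\frac1\xi Q)^{-1}}$ costs a Young-inequality factor (already $(4\|Q\|+4\xi)G$ with equal weights), and the same recombination problem arises for the complementarity residual $\|y-[y-\tfrac1\xi(b-Ax)]_+\|$ versus the two mixed versions the gap actually bounds. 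So the claim that the constants will ``assemble into $4\xi+2\|Q\|+3/\xi$'' for the genuine-point residual is unsubstantiated, and the natural estimates give a strictly larger denominator.

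The second issue is that the entire quantitative content of the proof is deferred: the choice of step sizes for $\delta_x$, the Young splittings, the case analysis for $\delta_y$, and the final collapse of constants are described but not carried out, and these are precisely where the paper's Lemmas \ref{lem:residual-positive} and \ref{lem:residual} do their work. The repair is straightforward: drop the genuine-point reduction, apply \eqref{eq:gamma} at $(x,y^*)$ and at $(x^*,y)$ using $\mathrm{dist}^2(z,\mathcal Z^*)=\mathrm{dist}^2(x,\mathcal X^*)+\mathrm{dist}^2(y,\mathcal Y^*)$, and then your test-point machinery with the optimal $\delta_x,\delta_y$ above yields exactly the five mixed-residual bounds and the advertised constant.
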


Theorem \ref{thm:qp-qg} connects the quadratic growth constant $\alpha_\xi$ with the sharpness of the scaled residual $F_{\xi}(z)$. Together with Theorem \ref{thm:main}, it shows that the linear convergence of Algorithm \ref{alg:rapd} is driven by the sharpness of KKT residual, which is consistent with the recent findings for LP~\cite{applegate2023faster}. 

The proof of Theorem \ref{thm:qp-qg} will be presented in the rest of the section. We first obtain the closed form of the smoothed duality gap of \eqref{eq:minimax} through direct calculation.
\begin{lem}
Let $z^*$ be an optimal solution to \eqref{eq:minimax}. Denote $G_\xi(z;z^*)$ the smoothed duality gap of \eqref{eq:minimax} where $z\in \mathcal Z$. Then it holds for any $\xi>0$ that
{
\begin{align}\label{eq:gap-explicit}
\begin{split}
    G_{\xi}(z;z^*)& \ =\frac{1}{2}x^TQx+c^Tx+b^Ty+\iota_{\mathbb R^m_+}(y) \\
    & \ \quad+\frac{\xi}{2}\left\| \left[ y^*+\frac{1}{\xi}(Ax-b)\right]_+\right\|^2-\frac{\xi}{2}\|y^*\|^2\\
    & \ \quad+\frac{\xi}{2}\left\| x^*-\frac{1}{\xi}(c+A^Ty)\right\|_{\pran{I+\frac{1}{\xi}Q}^{-1}}^2-\frac{\xi}{2}\|x^*\|^2 \ .
\end{split}    
\end{align}
}
\end{lem}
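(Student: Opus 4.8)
The plan is to evaluate the maximization defining $G_\xi(z;z^*)$ in closed form by expanding $\mathcal L$ and noticing that the resulting objective splits additively into a part depending only on $\hat x$, a part depending only on $\hat y$, and a part constant in $(\hat x,\hat y)$. Writing $\hat z=(\hat x,\hat y)$, I would start from
\begin{equation*}
\mathcal L(x,\hat y)-\mathcal L(\hat x,y)-\frac{\xi}{2}\|\hat z-z^*\|^2 = \left(\frac12 x^TQx+c^Tx+b^Ty\right) + g_1(\hat y) + g_2(\hat x) \ ,
\end{equation*}
where $g_1(\hat y)=\langle \hat y,\,Ax-b\rangle-\frac{\xi}{2}\|\hat y-y^*\|^2$ and $g_2(\hat x)=-\frac12\hat x^TQ\hat x-\langle \hat x,\,c+A^Ty\rangle-\frac{\xi}{2}\|\hat x-x^*\|^2$. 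Consequently $G_\xi(z;z^*)$ equals the constant term plus $\max_{\hat y\ge 0}g_1(\hat y)$ plus $\max_{\hat x\in\mathbb R^n}g_2(\hat x)$, so it remains to compute these two maxima. The term $\iota_{\mathbb R^m_+}(y)$ in \eqref{eq:gap-explicit} merely records that the identity is being stated with $y$ ranging over all of $\mathbb R^m$; for $z\in\mathcal Z$ it vanishes.

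For the dual maximum, I would complete the square to get $g_1(\hat y)=-\frac{\xi}{2}\bigl\|\hat y-w\bigr\|^2+\frac{\xi}{2}\|w\|^2-\frac{\xi}{2}\|y^*\|^2$ with $w=y^*+\tfrac{1}{\xi}(Ax-b)$, whence $\max_{\hat y\ge 0}g_1(\hat y)=-\frac{\xi}{2}\min_{\hat y\ge 0}\|\hat y-w\|^2+\frac{\xi}{2}\|w\|^2-\frac{\xi}{2}\|y^*\|^2$. Since the Euclidean projection of $w$ onto $\mathbb R^m_+$ is $[w]_+$ and $\|w\|^2=\|[w]_+\|^2+\|w-[w]_+\|^2$ coordinate-wise, the first two terms collapse to $\frac{\xi}{2}\|[w]_+\|^2$, which is exactly the second line of \eqref{eq:gap-explicit}.

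For the primal maximum, I would rewrite $g_2(\hat x)=-\frac12\hat x^T(Q+\xi I)\hat x+\langle \hat x,\,\xi x^*-(c+A^Ty)\rangle-\frac{\xi}{2}\|x^*\|^2$; because $Q\succeq 0$ and $\xi>0$ the matrix $Q+\xi I$ is positive definite, hence invertible, $g_2$ is strongly concave, and the unconstrained maximizer is $\hat x=(Q+\xi I)^{-1}\bigl(\xi x^*-(c+A^Ty)\bigr)$ with optimal value $\frac12\bigl(\xi x^*-(c+A^Ty)\bigr)^T(Q+\xi I)^{-1}\bigl(\xi x^*-(c+A^Ty)\bigr)-\frac{\xi}{2}\|x^*\|^2$. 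Pulling a factor $\xi$ out of each copy of the vector and using $(Q+\xi I)^{-1}=\tfrac{1}{\xi}\bigl(I+\tfrac{1}{\xi}Q\bigr)^{-1}$ converts the quadratic form into $\frac{\xi}{2}\bigl\|x^*-\tfrac{1}{\xi}(c+A^Ty)\bigr\|_{(I+\frac{1}{\xi}Q)^{-1}}^2$, which is the third line of \eqref{eq:gap-explicit}. Summing the three pieces gives the claimed identity.

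There is no genuine obstacle here: the lemma reduces to completing the square twice over separable variables. The only step requiring care is the bookkeeping — tracking the factor $\tfrac{1}{\xi}$ when passing between $(Q+\xi I)^{-1}$ and $(I+\tfrac{1}{\xi}Q)^{-1}$ so that the weighted norm comes out with the right scaling, and, in the dual step, expanding $\mathcal L(x,\hat y)$ correctly so that the cross term is $\langle\hat y,Ax-b\rangle$ (hence the positive part is taken of $y^*+\tfrac{1}{\xi}(Ax-b)$ rather than of its negative).
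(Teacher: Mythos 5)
Your proposal is correct and follows essentially the same route as the paper's proof: separate the objective into a constant part plus terms depending only on $\hat y$ and only on $\hat x$, then complete the square in each block, using the projection identity $\|w\|^2=\|[w]_+\|^2+\|w-[w]_+\|^2$ for the dual maximum and the rescaling $(Q+\xi I)^{-1}=\tfrac{1}{\xi}\bigl(I+\tfrac{1}{\xi}Q\bigr)^{-1}$ for the primal maximum. No gaps.
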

\begin{proof}
By definition, we can rewrite the smoothed duality gap as follows
{
    \begin{align}\label{eq:gap-rewrite}
    \begin{split}
        G_{\xi}(z;z^*)& \ =\max_{\hat z}\ \left\{\mathcal L(x,\hat y)-\mathcal L(\hat x,y)-\frac{\xi}{2}\|\hat x-x^*\|^2-\frac{\xi}{2}\|\hat y-y^*\|^2\right\} \\ 
        & \ =\frac{1}{2}x^TQx+c^Tx+b^Ty+\iota_{\mathbb R^m_+}(y)\\
        & \ \quad + \max_{\hat y}\ \left\{\hat y^TAx-b^T\hat y-\iota_{\mathbb R_+^m}(\hat y)-\frac{\xi}{2}\|\hat y-y^*\|^2\right\} \\ 
        & \ \quad + \max_{\hat x}\ \left\{-y^TA\hat x-\frac{1}{2}\hat x^TQ\hat x-c^T\hat x-\frac{\xi}{2}\|\hat x-x^*\|^2\right\} \ .
    \end{split}    
    \end{align}
    }
    
    Therefore we need to calculate the last two terms in \eqref{eq:gap-rewrite}. Note that
    {\small
    \begin{align}\label{eq:gap-term-1}
        \begin{split}
            \max_{\hat y}\left\{ \hat y^TAx-b^T\hat y-\iota_{\mathbb R_+^m}(\hat y)-\frac{\xi}{2}\|\hat y-y^*\|^2\right\} & \ = \max_{\hat y\geq 0} \left\{\hat y^TAx-b^T\hat y-\frac{\xi}{2}\|\hat y-y^*\|^2 \right\} \\
            & \ = \max_{\hat y\geq 0} \left\{-\frac{\xi}{2}\left\| \hat y-\pran{y^*+\frac{1}{\xi}(Ax-b)} \right\|^2\right\}+\frac{\xi}{2}\left\|y^*+\frac{1}{\xi}(Ax-b)\right\|^2-\frac{\xi}{2}\|y^*\|^2\\
            & \ =\frac{\xi}{2}\left\| \left[ y^*+\frac{1}{\xi}(Ax-b)\right]_+ \right\|^2-\frac{\xi}{2}\|y^*\|^2 \ ,
        \end{split}
    \end{align}
    }
    and
    {\small
    \begin{align}\label{eq:gap-term-2}
        \begin{split}
            \max_{\hat x} \left\{-y^TA\hat x-\frac{1}{2}\hat x^TQ\hat x-c^T\hat x-\frac{\xi}{2}\|\hat x-x^*\|^2\right\} & \ = \max_{\hat x} \left\{-\frac{\xi}{2}\left\|\hat x-\pran{I+\frac{1}{\xi}Q}^{-1}\pran{x^*-\frac{1}{\xi}(c+A^Ty)}\right\|_{I+\frac{1}{\xi}Q}^2\right\}\\
            & \ \quad +\frac{\xi}{2}\left\| x^*-\frac{1}{\xi}(c+A^Ty)\right\|_{\pran{I+\frac{1}{\xi}Q}^{-1}}^2-\frac{\xi}{2}\|x^*\|^2\\
            & \ =\frac{\xi}{2}\left\| x^*-\frac{1}{\xi}(c+A^Ty)\right\|_{\pran{I+\frac{1}{\xi}Q}^{-1}}^2-\frac{\xi}{2}\|x^*\|^2 \ .
        \end{split}
    \end{align}
    } 
    Combining \eqref{eq:gap-rewrite}, \eqref{eq:gap-term-1} and \eqref{eq:gap-term-2}, we achieve \eqref{eq:gap-explicit}.
\end{proof}

The following simple lemma implies that we can split the (primal-dual) smoothed duality gap into the summation of the primal and dual parts, either of which has a simplified form.
\begin{lem}\label{lem:pd=p-d}
Let $z^*=(x^*,y^*)$ be an optimal solution to \eqref{eq:minimax} and denote $z=(x,y)\in \mathcal Z$. Then for any $\xi>0$,

(i) The smoothed duality gap at $(x,y)$ can be written as
\begin{equation*}
        G_{\xi}((x,y);z^*) = G_{\xi}((x,y^*);z^*)+G_{\xi}((x^*,y);z^*) \ ,
    \end{equation*}

(ii) The smoothed duality gap at $(x,y^*)$ equals
\begin{equation}\label{eq:gap-primal}
        G_{\xi}((x,y^*);z^*) = \frac{\xi}{2}\left\| \left[y^*+\frac{1}{\xi}(Ax-b)\right]_+ \right\|^2-\frac{\xi}{2}\|y^*\|^2-{y^*}^T(Ax-b)+\frac{1}{2}\|x-x^*\|_Q^2 \ ,
    \end{equation}

(iii) The smoothed duality gap at $(x^*,y)$ equals
\begin{equation}\label{eq:gap-dual}
        G_{\xi}((x^*,y);z^*) = \iota_{\mathbb R^m_+}(y)+\frac{1}{2\xi}\|Qx^*+c+A^Ty\|_{\pran{I+\frac{1}{\xi}Q}^{-1}}^2+y^T(b-Ax^*) \ .
\end{equation}
\end{lem}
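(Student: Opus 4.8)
The plan is to read all three identities off the closed-form expression \eqref{eq:gap-explicit} for $G_\xi(z;z^*)$, specializing one of the two blocks of its first argument to the optimal value and then simplifying with the KKT conditions \eqref{eq:KKT} of $z^*$.

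For part (i), the key observation is that the right-hand side of \eqref{eq:gap-explicit} splits as $G_\xi((x,y);z^*) = g(x) + h(y)$, where $g$ collects the terms depending only on the primal block (namely $\frac12 x^TQx + c^Tx + \frac{\xi}{2}\|[y^*+\frac1\xi(Ax-b)]_+\|^2$, up to a constant) and $h$ collects the terms depending only on the dual block; equivalently, the inner maximizations over $\hat x$ and over $\hat y$ in the definition of $G_\xi$ decouple. Evaluating this decomposition at $(x,y^*)$ and at $(x^*,y)$ and adding gives $G_\xi((x,y^*);z^*) + G_\xi((x^*,y);z^*) = g(x)+h(y)+g(x^*)+h(y^*) = G_\xi((x,y);z^*) + G_\xi(z^*;z^*)$, so it suffices to check $G_\xi(z^*;z^*)=0$. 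The bound $G_\xi(z^*;z^*)\geq 0$ comes from plugging $\hat z=z^*$ into the defining maximization, and the reverse bound holds because for every $\hat z$ the saddle-point inequalities $\mathcal L(x^*,\hat y)\leq \mathcal L(x^*,y^*)\leq \mathcal L(\hat x,y^*)$ give $Q(z^*,\hat z)\leq 0$, hence $Q(z^*,\hat z)-\frac{\xi}{2}\|\hat z-z^*\|^2\leq 0$.

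For part (ii), I substitute $y=y^*$ into \eqref{eq:gap-explicit}: the indicator term vanishes since $y^*\geq 0$, and dual feasibility $Qx^*+c+A^Ty^*=0$ gives $x^*-\frac1\xi(c+A^Ty^*) = \pran{I+\frac1\xi Q}x^*$, so the last line of \eqref{eq:gap-explicit} becomes $\frac{\xi}{2}\|x^*\|_{I+\frac1\xi Q}^2-\frac{\xi}{2}\|x^*\|^2$, which equals $\frac12\|x^*\|_Q^2$. It then remains to verify $\frac12 x^TQx + c^Tx + b^Ty^* + \frac12\|x^*\|_Q^2 = -{y^*}^T(Ax-b) + \frac12\|x-x^*\|_Q^2$, which after expanding $\|x-x^*\|_Q^2$ is equivalent to $x^T\pran{Qx^*+c+A^Ty^*}=0$, again dual feasibility.

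Part (iii) will take the most care. Substituting $x=x^*$ into \eqref{eq:gap-explicit}, primal feasibility $Ax^*\leq b$, complementary slackness $y^*_i(b-Ax^*)_i=0$, and $y^*\geq 0$ combine to give $[y^*+\frac1\xi(Ax^*-b)]_+=y^*$ coordinatewise, so $\frac{\xi}{2}\|[y^*+\frac1\xi(Ax^*-b)]_+\|^2-\frac{\xi}{2}\|y^*\|^2=0$. The remaining work is the matrix-weighted-norm algebra: with $N=I+\frac1\xi Q$, $M=N^{-1}$, $u=x^*-\frac1\xi(c+A^Ty)$ and $v=Qx^*+c+A^Ty$, one has $u+\frac1\xi v=Nx^*$, hence $\|u\|_M^2 = (x^*)^TNx^* - \frac2\xi(x^*)^Tv + \frac1{\xi^2}\|v\|_M^2$ using $NMN=N$ and $(Nx^*)^TMv=(x^*)^Tv$. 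Plugging this into \eqref{eq:gap-explicit} and collecting terms, the $\|x^*\|^2$, $\|x^*\|_Q^2$ and $c^Tx^*$ contributions all cancel and one is left with $\iota_{\mathbb R^m_+}(y)+\frac1{2\xi}\|v\|_M^2+y^T(b-Ax^*)$, which is \eqref{eq:gap-dual}. I expect this bookkeeping in part (iii) --- in particular tracking the $N=I+\frac1\xi Q$ factors through their cancellation --- to be the only real obstacle; parts (i) and (ii) are short once \eqref{eq:gap-explicit} and the KKT relations are available.
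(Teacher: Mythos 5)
Your proposal is correct. Parts (ii) and (iii) follow exactly the paper's route: substitute into the closed form \eqref{eq:gap-explicit}, use dual feasibility (resp.\ primal feasibility and complementary slackness) to kill the appropriate block, and carry out the weighted-norm expansion with $N=I+\tfrac1\xi Q$; your identity $\|u\|_{N^{-1}}^2=(x^*)^TNx^*-\tfrac2\xi (x^*)^Tv+\tfrac1{\xi^2}\|v\|_{N^{-1}}^2$ is precisely the computation in the paper's proof of (iii). The only divergence is part (i): the paper proves the splitting directly from the definition of $G_\xi$, inserting $\pm\mathcal L(x^*,y^*)$ inside the maximization and using that $\max_{\tilde x}\{-\mathcal L(\tilde x,y^*)-\tfrac{\xi}{2}\|\tilde x-x^*\|^2\}=-\mathcal L(x^*,y^*)$ (and its dual analogue) to decouple the two maxima, whereas you read the splitting off the separable structure of \eqref{eq:gap-explicit} and close the gap with $G_\xi(z^*;z^*)=0$ via the saddle-point inequalities. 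Both arguments are valid and of comparable length; the paper's version of (i) has the mild advantage of not presupposing the closed form, but since the closed form is established immediately beforehand, nothing is lost either way.
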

\begin{proof}
(i) The proof follows from the definition of the smoothed duality gap.
{\footnotesize
    \begin{align*}
        \begin{split}
            G_{\xi}((x,y);z^*) & = \max_{\hat x,\hat y}\left\{\mathcal L(x,\hat y)-\mathcal L(\hat x,y)-\frac{\xi}{2}\| \hat x-x^*\|^2-\frac{\xi}{2}\| \hat y-y^*\|^2\right\} \\
            & =\max_{\hat x,\hat y}\left\{\mathcal L(x,\hat y)-\mathcal L(x^*,y^*) + \mathcal L(x^*,y^*)-\mathcal L(\hat x,y)-\frac{\xi}{2}\| \hat x-x^*\|^2-\frac{\xi}{2}\| \hat y-y^*\|^2 \right\}\\
            & = \max_{\hat y} \left\{\mathcal L(x,\hat y)-\mathcal L(x^*,y^*)-\frac{\xi}{2}\| \hat y-y^*\|^2\right\} + \max_{\hat x} \left\{\mathcal L(x^*,y^*)-\mathcal L(\hat x,y)-\frac{\xi}{2}\| \hat x-x^*\|^2 \right\}\\
            & =\max_{\tilde x, \hat y} \left\{\mathcal L(x,\hat y)-\mathcal L(\tilde x,y^*)-\frac{\xi}{2}\| \hat y-y^*\|^2 -\frac{\xi}{2}\| \tilde x-x^*\|^2\right\} + \max_{\hat x,\tilde y} \left\{\mathcal L(x^*,\tilde y)-\mathcal L(\hat x,y)-\frac{\xi}{2}\| \hat x-x^*\|^2-\frac{\xi}{2}\|\tilde y-y^*\|^2\right\}\\
            & =G_{\xi}((x,y^*);z^*)+G_{\xi}((x^*,y);z^*) \ .
        \end{split}
    \end{align*}
}

(ii) Plugging $(x,y^*)$ into \eqref{eq:gap-explicit}, we have
\begin{align}\label{eq:gap-p}
        \begin{split}
            G_{\xi}((x,y^*);z^*)& \ = \frac{1}{2}x^TQx+c^Tx+b^Ty^* \\
            & \ \quad+\frac{\xi}{2}\left\| \left[y^*+\frac{1}{\xi}(Ax-b)\right]_+ \right\|^2-\frac{\xi}{2}\|y^*\|^2\\
            & \ \quad+\frac{\xi}{2}\left\| x^*-\frac{1}{\xi}(c+A^Ty^*)\right\|_{\pran{I+\frac{1}{\xi}Q}^{-1}}^2-\frac{\xi}{2}\|x^*\|^2  \ .
        \end{split}
\end{align}
The last term in \eqref{eq:gap-p} can be further simplified as follows
    \begin{align*}
        \begin{split}
            \frac{\xi}{2}\left\| x^*-\frac{1}{\xi}(c+A^Ty^*)\right\|_{\pran{I+\frac{1}{\xi}Q}^{-1}}^2-\frac{\xi}{2}\|x^*\|^2 & \ = \frac{\xi}{2}\left\| x^*-\frac{1}{\xi}(-Qx^*)\right\|_{\pran{I+\frac{1}{\xi}Q}^{-1}}^2-\frac{\xi}{2}\|x^*\|^2\\
            & \ = \frac{\xi}{2}\left\| x^*\right\|_{\pran{I+\frac{1}{\xi}Q}}^2 -\frac{\xi}{2}\|x^*\|^2\\
            & \ =\frac{1}{2}{x^*}^{T}Qx^* \ .
        \end{split}
    \end{align*}
    Accordingly, it holds that
    {\small
    \begin{align*}
        \begin{split}
            G_{\xi}((x,y^*);z^*)& \ = \frac{1}{2}x^TQx+\frac{1}{2}{x^*}^{T}Qx^*+c^Tx+b^Ty^* +\frac{\xi}{2}\left\| \left[y^*+\frac{1}{\xi}(Ax-b)\right]_+ \right\|^2-\frac{\xi}{2}\|y^*\|^2\\
            & \ = \frac{1}{2}x^TQx+\frac{1}{2}{x^*}^{T}Qx^*+(-Qx^*-A^Ty^*)^Tx+b^Ty^* +\frac{\xi}{2}\left\| \left[y^*+\frac{1}{\xi}(Ax-b)\right]_+ \right\|^2-\frac{\xi}{2}\|y^*\|^2\\
            & \ = \frac{1}{2}\|x-x^*\|_Q^2 + \frac{\xi}{2}\left\| \left[y^*+\frac{1}{\xi}(Ax-b)\right]_+ \right\|^2-\frac{\xi}{2}\|y^*\|^2+{y^*}^T(b-Ax) \ .
        \end{split}
    \end{align*}
    }
    
(iii)  Plugging $(x,y^*)$ into \eqref{eq:gap-explicit}, we have 
\begin{align}\label{eq:gap-d}
    \begin{split}
        G_{\xi}((x^*,y);z^*)& \ =\frac{1}{2}{x^*}^TQx^*+c^Tx^*+b^Ty+\iota_{\mathbb R^m_+}(y) \\
        & \ \quad+\frac{\xi}{2}\left\| \left[y^*+\frac{1}{\xi}(Ax^*-b)\right]_+ \right\|^2-\frac{\xi}{2}\|y^*\|^2\\
        & \ \quad+\frac{\xi}{2}\left\| x^*-\frac{1}{\xi}(c+A^Ty)\right\|_{\pran{I+\frac{1}{\xi}Q}^{-1}}^2-\frac{\xi}{2}\|x^*\|^2 \ .
    \end{split}    
\end{align}
    Note that due to complementary slackness,
    \begin{equation*}
        \frac{\xi}{2}\left\| \left[y^*+\frac{1}{\xi}(Ax^*-b)\right]_+ \right\|^2-\frac{\xi}{2}\|y^*\|^2 = 0 \ ,
    \end{equation*}
    and the last term in \eqref{eq:gap-d} equals
    \begin{align*}
    \begin{split}
        \frac{\xi}{2}\left\| x^*-\frac{1}{\xi}(c+A^Ty)\right\|_{\pran{I+\frac{1}{\xi}Q}^{-1}}^2-\frac{\xi}{2}\|x^*\|^2 & \ = \frac{\xi}{2}\left\| x^*-\frac{1}{\xi}(c+A^Ty+Qx^*)+\frac{1}{\xi}Qx^*\right\|_{\pran{I+\frac{1}{\xi}Q}^{-1}}^2-\frac{\xi}{2}\|x^*\|^2 \\
        & \ = \frac{1}{2}{x^*}^TQx^*+\frac{1}{2\xi}\|c+A^Ty+Qx^*\|_{\pran{I+\frac{1}{\xi}Q}^{-1}}^2-{x^*}^T(c+A^Ty+Qx^*) \ .
    \end{split}    
    \end{align*}
    Thus we achieve
    \begin{align*}
    \begin{split}
        G_{\xi}((x^*,y);z^*)& \ =\frac{1}{2}{x^*}^TQx^*+c^Tx^*+b^Ty+\iota_{\mathbb R^m_+}(y) \\
        & \ \quad+\frac{1}{2}{x^*}^TQx^*+\frac{1}{2\xi}\|c+A^Ty+Qx^*\|_{\pran{I+\frac{1}{\xi}Q}^{-1}}^2-{x^*}^T(c+A^Ty+Qx^*)\\
        & \ = \frac{1}{2\xi}\|c+A^Ty+Qx^*\|_{\pran{I+\frac{1}{\xi}Q}^{-1}}^2+y^T(b-Ax^*)+\iota_{\mathbb R^m_+}(y) \ .
    \end{split}    
    \end{align*}
\end{proof}

An auxiliary lemma, as stated below, is helpful to build the bound to KKT residual by the smoothed duality gap. 
\begin{lem}\label{lem:residual-positive}
Let $z^*=(x^*,y^*)$ be an optimal solution to \eqref{eq:minimax} and denote $z=(x,y)\in \mathcal Z$. Let $\xi>0$.
\begin{enumerate}
    \item[(i)] Denote $\mathcal I_+=\{i:y_i^*+\frac{1}{\xi}(A_ix-b_i)\geq 0  \}$ and $I_-=\{i:y_i^*+\frac{1}{\xi}(A_ix-b_i)< 0  \}$.  Then
    \begin{equation*}
        \sum_{i\in \mathcal I_-}\frac{1}{2}y_i^*(b_i-A_ix)\geq 0 \ ,
    \end{equation*}
    and
    \begin{equation*}
        0\leq  \sum_{i\in \mathcal I_+} \frac{1}{2\xi} (A_ix-b_i)^2 + \sum_{i\in \mathcal I_-}\frac{1}{2}y_i^*(b_i-A_ix)+\frac{1}{2}\|x-x^*\|_Q^2 \leq G_{\xi}((x,y);z^*) \ .
    \end{equation*}
    \item[(ii)] It holds that
    \begin{equation*}
        y^T(b-Ax^*)+\iota_{\mathbb R^m_+}(y) \geq 0 \ ,
    \end{equation*}
    and
    \begin{equation*}
        0\leq \frac{1}{2\xi}\|c+A^Ty+Qx^*\|_{\pran{I+\frac{1}{\xi}Q}^{-1}}^2+y^T(b-Ax^*)+\iota_{\mathbb R^m_+}(y) \leq G_{\xi}((x,y);z^*) \ .
    \end{equation*}
\end{enumerate}  
\end{lem}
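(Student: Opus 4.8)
The plan is to reduce both parts to the explicit formulas for the primal and dual pieces of the smoothed gap recorded in Lemma~\ref{lem:pd=p-d}, combined with two elementary facts. First, for the saddle point $z^*=(x^*,y^*)$ of \eqref{eq:minimax} one has $G_\xi((x,y^*);z^*)\ge 0$ and $G_\xi((x^*,y);z^*)\ge 0$: plugging the feasible choice $\hat z=z^*$ into the maximum defining $G_\xi((x,y^*);z^*)$ yields $\mathcal L(x,y^*)-\mathcal L(x^*,y^*)\ge 0$ because $x^*$ minimizes $\mathcal L(\cdot,y^*)$, and symmetrically for the dual piece. Second, optimality of $z^*$ gives $y^*\ge 0$ and $Ax^*\le b$.

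For part (ii), the sign claim $y^T(b-Ax^*)+\iota_{\mathbb R^m_+}(y)\ge 0$ is immediate: when $y\notin\mathbb R^m_+$ the indicator is $+\infty$, and when $y\ge 0$ primal feasibility $Ax^*\le b$ gives $b-Ax^*\ge 0$, so $y^T(b-Ax^*)\ge 0$. The left inequality of the displayed chain then holds because the extra term $\frac{1}{2\xi}\|c+A^Ty+Qx^*\|_{(I+\frac1\xi Q)^{-1}}^2$ is a squared seminorm, hence nonnegative. The right inequality is exactly Lemma~\ref{lem:pd=p-d}(iii), which identifies the middle quantity with $G_\xi((x^*,y);z^*)$, together with Lemma~\ref{lem:pd=p-d}(i) and the first fact above: $G_\xi((x^*,y);z^*)=G_\xi((x,y);z^*)-G_\xi((x,y^*);z^*)\le G_\xi((x,y);z^*)$.

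Part (i) runs along the same lines with more coordinatewise bookkeeping. For $i\in\mathcal I_-$ the defining inequality $y_i^*+\frac1\xi(A_ix-b_i)<0$ together with $y_i^*\ge 0$ and $\xi>0$ forces $A_ix-b_i<0$, i.e. $b_i-A_ix>0$; multiplying by $y_i^*\ge 0$ proves $\sum_{i\in\mathcal I_-}\frac12 y_i^*(b_i-A_ix)\ge 0$, and the left inequality of the chain then follows since the other two summands ($\sum_{\mathcal I_+}$ of squares, and $\frac12\|x-x^*\|_Q^2$ with $Q\succeq 0$) are nonnegative. For the right inequality, bound $G_\xi((x,y);z^*)\ge G_\xi((x,y^*);z^*)$ as above and plug in the closed form from Lemma~\ref{lem:pd=p-d}(ii); after canceling the common term $\frac12\|x-x^*\|_Q^2$ it remains to show
\begin{equation*}
\sum_{i\in\mathcal I_+}\frac1{2\xi}(A_ix-b_i)^2+\sum_{i\in\mathcal I_-}\frac12 y_i^*(b_i-A_ix)\ \le\ \frac\xi2\Big\|\big[y^*+\tfrac1\xi(Ax-b)\big]_+\Big\|^2-\frac\xi2\|y^*\|^2-{y^*}^T(Ax-b)\ .
\end{equation*}
Setting $w_i=y_i^*+\frac1\xi(A_ix-b_i)$ and expanding the right-hand side as $\frac\xi2\sum_i\big([w_i]_+^2+(y_i^*)^2-2y_i^*w_i\big)$, each index in $\mathcal I_+$ contributes exactly $\frac\xi2(w_i-y_i^*)^2=\frac1{2\xi}(A_ix-b_i)^2$, matching the first sum on the left; the residual inequality then reduces, index by index over $\mathcal I_-$, to $y_i^*\big(b_i-A_ix-\xi y_i^*\big)\ge 0$, which is the defining relation $w_i<0$ (rewritten as $b_i-A_ix-\xi y_i^*>0$) multiplied by $y_i^*\ge 0$.

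The main obstacle is the coordinatewise algebra in part (i): one must expand the positive-part squared term correctly, use the identity $A_ix-b_i=\xi(w_i-y_i^*)$, and notice that the $\mathcal I_+$ block is an exact equality while the $\mathcal I_-$ block carries genuine slack governed by the sign of $w_i$. Everything else is routine bookkeeping around Lemma~\ref{lem:pd=p-d} and the feasibility/nonnegativity facts above.
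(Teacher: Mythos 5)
Your proposal is correct and follows essentially the same route as the paper: decompose the gap via Lemma~\ref{lem:pd=p-d}, verify nonnegativity of the primal and dual pieces, and carry out the same coordinatewise expansion of the positive-part term over $\mathcal I_+$ and $\mathcal I_-$ (your identity $\frac{\xi}{2}(w_i-y_i^*)^2=\frac{1}{2\xi}(A_ix-b_i)^2$ and the slack $y_i^*(b_i-A_ix-\xi y_i^*)\ge 0$ are exactly the paper's \eqref{eq:eq-p-1}--\eqref{eq:eq-p-2}). The only cosmetic difference is that you obtain nonnegativity of each piece directly from the saddle-point property by taking $\hat z=z^*$ in the defining maximum, which is slightly cleaner than the paper's derivation from the explicit formulas, but it is not a different argument.
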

\begin{proof}
    (i) Recall Equation \eqref{eq:gap-primal}. For $i\in\mathcal I_+=\{i:y_i^*+\frac{1}{\xi}(A_ix-b_i)\geq 0  \}$,  we have
    {\small
    \begin{align}\label{eq:eq-p-1}
        \begin{split}
            \frac{\xi}{2}\pran{\left[y_i^*+\frac{1}{\xi}(A_ix-b_i)\right]_+ }^2-\frac{\xi}{2}(y_i^*)^2-{y_i^*}(A_ix-b_i) & = \frac{\xi}{2}\pran{y_i^*+\frac{1}{\xi}(A_ix-b_i) }^2-\frac{\xi}{2}(y_i^*)^2-{y_i^*}(A_ix-b_i)\\
            & = \frac{1}{2\xi}\pran{A_ix-b_i}^2 \geq 0 \ .
        \end{split}
    \end{align}
    }
    For $i\in\mathcal I_-=\{i:y_i^*+\frac{1}{\xi}(A_ix-b_i)< 0  \}$, it holds that $A_ix-b_i<-\xi y_i^*\leq 0$ and 
    {\small
    \begin{align}\label{eq:eq-p-2}
        \begin{split}
            \frac{\xi}{2}\pran{\left[y_i^*+\frac{1}{\xi}(A_ix-b_i)\right]_+ }^2-\frac{\xi}{2}(y_i^*)^2-{y_i^*}(A_ix-b_i) & = -\frac{\xi}{2}(y_i^*)^2-{y_i^*}(A_ix-b_i)\\
            & = y_i^*\pran{b_i-A_ix-\frac{\xi}{2}y_i^*} \\
            & \geq \frac 12 y_i^*(b_i-A_ix) \geq \frac{\xi}{2} {y_i^*}^2  \geq 0 \ .
        \end{split}
    \end{align}
    }
    Thus
    \begin{align*}
        \begin{split}
            G_{\xi}((x,y);z^*) & \geq G_{\xi}((x,y^*);z^*) \\ 
            & = \frac{\xi}{2}\left\| \left[y^*+\frac{1}{\xi}(Ax-b)\right]_+ \right\|^2-\frac{\xi}{2}\|y^*\|^2-{y^*}^T(Ax-b)+\frac{1}{2}\|x-x^*\|_Q^2 \\ 
            & \geq \sum_{i\in \mathcal I_+} \frac{1}{2\xi} (A_ix-b_i)^2 + \sum_{i\in \mathcal I_-}\frac{1}{2}y_i^*(b_i-A_ix)+\frac{1}{2}\|x-x^*\|_Q^2 \geq 0 \ ,
        \end{split}
    \end{align*}
    where the first inequality and equality follow from part (i) and (ii) in Lemma \ref{lem:pd=p-d}, respectively, and the last inequality combines \eqref{eq:eq-p-1} and \eqref{eq:eq-p-2}.

    (ii) Since $b-Ax^*\geq 0$, we have for any $y\geq 0$
    \begin{equation*}
        y^T(b-Ax^*)+\iota_{\mathbb R^m_+}(y) \geq 0 \ ,
    \end{equation*}
    and thus 
    \begin{equation*}
        G_{\xi}((x,y);z^*) \geq G_{\xi}((x^*,y);z^*) = \frac{1}{2\xi}\|Qx^*+c+A^Ty\|_{\pran{I+\frac{1}{\xi}Q}^{-1}}^2+y^T(b-Ax^*)+\iota_{\mathbb R^m_+}(y) \geq 0 \ .
    \end{equation*}
\end{proof}
The next result is the key lemma to the proof of Theorem \ref{thm:qp-qg}, which implies that we can indeed upper bound each residual in KKT system (i.e., primal residual, dual residual, and complementarity) by smoothed duality gap.
\begin{lem}\label{lem:residual}
Let $z^*=(x^*,y^*)$ be an optimal solution to \eqref{eq:minimax} and denote $z=(x,y)\in \mathcal Z$. Then it holds for $\xi>0$ that
    \begin{enumerate}
        \item[(i)] Primal residual: \begin{equation*}
        \| [Ax-b]_+ \|^2 \leq {2\xi G_{\xi}(z;z^*)} \ ,
    \end{equation*}
     \item[(ii)] Dual residual:
     \begin{equation*}
         \| Qx+c+A^Ty^* \|^2 \leq {2\|Q\|G_{\xi}(z;z^*)} \quad \text{and} \quad \|Qx^*+c+A^Ty\|_{\pran{I+\frac{1}{\xi}Q}^{-1}}^2\leq {2\xi G_{\xi}(z;z^*)} \ ,
     \end{equation*}
     \item[(iii)] Complementarity:
     \begin{equation*}
        \left\|y-\left[y-\frac{1}{\xi}(b-Ax^*)\right]_+\right\|^2 \leq \frac{1}{\xi}G_{\xi}(z;z^*) \quad \text{and} \quad \left\|y^*-\left[y^*-\frac{1}{\xi}(b-Ax)\right]_+\right\|^2 \leq \frac{2}{\xi}G_{\xi}(z;z^*) \ .
    \end{equation*}
    \end{enumerate}
\end{lem}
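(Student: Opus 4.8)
The plan is to obtain all six estimates from Lemma~\ref{lem:residual-positive}, which already shows that the ``primal half'' and ``dual half'' of the smoothed duality gap dominate certain \emph{nonnegative aggregates} of the KKT residuals; what is left is a short list of elementary pointwise comparisons, done coordinate by coordinate.

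\textbf{Primal and dual residuals.} Since $y_i^*\ge 0$, the index set $\{i:A_ix-b_i\ge 0\}$ is contained in $\mathcal I_+$, and there $([A_ix-b_i]_+)^2=(A_ix-b_i)^2$; hence $\|[Ax-b]_+\|^2\le \sum_{i\in\mathcal I_+}(A_ix-b_i)^2=2\xi\sum_{i\in\mathcal I_+}\tfrac{1}{2\xi}(A_ix-b_i)^2$, and dropping the two remaining nonnegative summands in Lemma~\ref{lem:residual-positive}(i) bounds this by $2\xi\,G_\xi(z;z^*)$. For the second dual bound, I would just discard $y^T(b-Ax^*)+\iota_{\mathbb R^m_+}(y)\ge 0$ in Lemma~\ref{lem:residual-positive}(ii) and multiply by $2\xi$. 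For the first dual bound, use dual feasibility $Qx^*+c+A^Ty^*=0$ to write $Qx+c+A^Ty^*=Q(x-x^*)$; then, because $Q\succeq 0$ forces the operator inequality $Q^2\preceq \|Q\|\,Q$, one has $\|Q(x-x^*)\|^2=(x-x^*)^TQ^2(x-x^*)\le\|Q\|\,\|x-x^*\|_Q^2$, and $\tfrac12\|x-x^*\|_Q^2\le G_\xi(z;z^*)$ again follows from Lemma~\ref{lem:residual-positive}(i) after dropping the other (nonnegative) terms.

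\textbf{Complementarity.} Both bounds come from a coordinate-wise case split on the sign of the argument of $[\,\cdot\,]_+$. For the first, let $u_i=(b-Ax^*)_i\ge 0$: if $y_i\ge\tfrac1\xi u_i$ then $y_i-[y_i-\tfrac1\xi u_i]_+=\tfrac1\xi u_i$ and $\tfrac1{\xi^2}u_i^2\le\tfrac1\xi y_iu_i$, while if $y_i<\tfrac1\xi u_i$ then $y_i-[y_i-\tfrac1\xi u_i]_+=y_i$ and $y_i^2\le\tfrac1\xi y_iu_i$; summing gives $\|y-[y-\tfrac1\xi(b-Ax^*)]_+\|^2\le\tfrac1\xi y^T(b-Ax^*)\le\tfrac1\xi G_\xi(z;z^*)$ via Lemma~\ref{lem:residual-positive}(ii). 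For the second, split over $\mathcal I_\pm$ (which is exactly the partition by the sign of $y_i^*-\tfrac1\xi(b_i-A_ix)$): on $\mathcal I_+$ the term equals $\tfrac1{\xi^2}(A_ix-b_i)^2=\tfrac2\xi\cdot\tfrac1{2\xi}(A_ix-b_i)^2$, and on $\mathcal I_-$ it equals $(y_i^*)^2$, for which the inequality $\tfrac12 y_i^*(b_i-A_ix)\ge\tfrac\xi2(y_i^*)^2$ proved inside Lemma~\ref{lem:residual-positive}(i) (see \eqref{eq:eq-p-2}) gives $(y_i^*)^2\le\tfrac2\xi\cdot\tfrac12 y_i^*(b_i-A_ix)$; summing and invoking Lemma~\ref{lem:residual-positive}(i) after dropping the $Q$-term yields the factor $\tfrac2\xi$.

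\textbf{Main obstacle.} No step is deep, but the care lies in the complementarity part: one has to pick the correct case split so that the residual quadratic that survives in each branch lines up term-by-term with the aggregates available in Lemma~\ref{lem:residual-positive}, and to track the constants produced by the two branches. In particular the factor-of-two loss in the second complementarity bound is intrinsic, coming from the $\mathcal I_+$ branch where $\tfrac1{\xi^2}(A_ix-b_i)^2$ must be compared against the available $\tfrac1{2\xi}(A_ix-b_i)^2$; and reusing the sharper bound $\tfrac12 y_i^*(b_i-A_ix)\ge\tfrac\xi2(y_i^*)^2$ (rather than mere nonnegativity) on the $\mathcal I_-$ branch is what makes the two branches yield the same constant $\tfrac2\xi$.
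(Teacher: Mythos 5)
Your proposal is correct and follows essentially the same route as the paper: every one of the six bounds is extracted from Lemma~\ref{lem:residual-positive} by dropping nonnegative terms, with the same coordinate-wise case splits (your $u_i$-split is exactly the paper's $\mathcal J_1,\mathcal J_2$ partition, and your $\mathcal I_\pm$ argument, including reusing the bound from \eqref{eq:eq-p-2}, is the paper's). The only difference is presentational detail, e.g.\ you spell out why $\{i:A_ix-b_i\ge 0\}\subseteq\mathcal I_+$ and why $Q^2\preceq\|Q\|Q$, which the paper leaves implicit.
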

\begin{proof}
    (i) Recall the definition of $\mathcal I_+=\{i:y_i^*+\frac{1}{\xi}(A_ix-b_i)\geq 0  \}$.
    \begin{equation*}
        \| [Ax-b]_+ \|^2 \leq \sum_{i\in \mathcal I_+} (A_ix-b_i)^2 \leq 2\xi G_{\xi}((x,y^*);z^*) \leq 2\xi G_{\xi}(z;z^*) \ ,
    \end{equation*}
    where the second inequality uses part (i) of Lemma \ref{lem:residual-positive}.

    (ii) By part (i) of Lemma \ref{lem:residual-positive}, we have $\frac 12\|x-x^*\|_Q^2\leq G_{\xi}((x,y^*);z^*)$ and thus
    \begin{equation*}
        \| Qx+c+A^Ty^* \|^2 = \|Qx-Qx^*\|^2 \leq \|Q\|\|x-x^*\|_Q^2 \leq {2\|Q\|G_{\xi}(z;z^*)} \ .
    \end{equation*}
    Moreover, we have
    \begin{equation*}
        \frac{1}{2\xi}\|Qx^*+c+A^Ty\|_{\pran{I+\frac{1}{\xi}Q}^{-1}}^2\leq G_{\xi}((x^*,y);z^*)\leq  G_{\xi}(z;z^*) \ ,
    \end{equation*}
    where the first inequality is due to part (ii) of Lemma \ref{lem:residual-positive}.

    (iii) Denote $\mathcal J_1:=\{j:b_j-A_jx^*\leq \xi y_j\}$ and $\mathcal J_2:=\{j:b_j-A_jx^*> \xi y_j\}$. Then for any $y\geq 0$,
    \begin{align*}
        \begin{split}
            \left\|y-\left[y-\frac{1}{\xi}(b-Ax^*)\right]_+\right\|^2 & \ =\frac{1}{\xi^2}\sum_{j\in \mathcal J_1} (b_j-A_jx^*)^2+\sum_{j\in \mathcal J_2}y_j^2 \\
            & \ \leq \frac{1}{\xi}\sum_{j\in \mathcal J_1} y_j(b_j-A_jx^*)+\frac{1}{\xi}\sum_{j\in \mathcal J_2}y_j(b_j-A_jx^*) \\
            & = \frac{1}{\xi}y^T(b-Ax^*) \leq \frac{1}{\xi}G_{\xi}((x,y);z^*) \ ,
        \end{split}
    \end{align*}
    where the first inequality is from the definition of $\mathcal J_1$ and $\mathcal J_2$, and the second inequality uses part (ii) of Lemma \ref{lem:residual-positive}.
    This finishes the proof of the first inequality.
    
    To show the second one, recall $\mathcal I_+=\{i:y_i^*+\frac{1}{\xi}(A_ix-b_i)\geq 0  \}$ and $\mathcal I_-=\{i:y_i^*+\frac{1}{\xi}(A_ix-b_i)< 0\}$ 
    \begin{align*}
        \begin{split}
            \left\|y^*-\left[y^*-\frac{1}{\xi}(b-Ax)\right]_+\right\|^2 & \ =\sum_{i\in\mathcal I_+}\left(y_i^*-\left[y_i^*-\frac{1}{\xi}(b_i-A_ix)\right]_+\right)^2+\sum_{i\in\mathcal I_-}\left(y_i^*-\left[y_i^*-\frac{1}{\xi}(b_i-A_ix)\right]_+\right)^2 \\
            & \ = \frac{1}{\xi^2}\sum_{i\in\mathcal I_+}(b_i-A_ix)^2+\sum_{i\in\mathcal I_-}{y_i^*}^2\\
            & \ \leq \frac{1}{\xi^2}\sum_{i\in\mathcal I_+}(b_i-A_ix)^2+\frac{1}{\xi}\sum_{i\in\mathcal I_-}y_i^*(b_i-A_ix)\\
            & \leq \frac{2}{\xi} G_{\xi}((x,y);z^*) \ ,
        \end{split}
    \end{align*}
    where the first equality follows the definition of $\mathcal I_+$ and $\mathcal I_-$, and the last inequality is due to part (i) of Lemma \ref{lem:residual-positive}.
\end{proof}
Now we are ready to prove Theorem \ref{thm:qp-qg}.
\begin{proof}[Proof of Theorem \ref{thm:qp-qg}]
By definition of $\gamma_\xi$ in \eqref{eq:gamma}, we have
\begin{equation}\label{eq:gamma-pd}
    \begin{aligned}
        & \gamma_\xi^2\mathrm{dist}^2(x,\mathcal X^*) \leq \|[Ax-b]_+\|^2 +\| Qx+c+A^Ty^*\|_{\pran{I+\frac{1}{\xi}Q}^{-1}}^2+\left\|y^*-\left[y^*-\frac{1}{\xi}(b-Ax)\right]_+\right\|^2 \\ 
        & \qquad\qquad\qquad \  \leq \|[Ax-b]_+\|^2 +\| Qx+c+A^Ty^*\|^2+\left\|y^*-\left[y^*-\frac{1}{\xi}(b-Ax)\right]_+\right\|^2 \\
        & \gamma_\xi^2\mathrm{dist}^2(y,\mathcal Y^*)\leq \| Qx^*+c+A^Ty\|_{\pran{I+\frac{1}{\xi}Q}^{-1}}^2+\left\|y-\left[y-\frac{1}{\xi}(b-Ax^*)\right]_+\right\|^2 \ .
    \end{aligned}
\end{equation}

Combining \eqref{eq:gamma-pd} and Lemma \ref{lem:residual}, we have
    \begin{align*}
        \begin{split}
            \gamma_\xi^2\mathrm{dist}^2(z,\mathcal Z^*)& = \gamma_\xi^2\mathrm{dist}^2(x,\mathcal X^*)+\gamma_\xi^2\mathrm{dist}^2(y,\mathcal Y^*)\\
            & \leq \|[Ax-b]_+\|^2 +\| Qx+c+A^Ty^*\|^2+\left\|y^*-\left[y^*-\frac{1}{\xi}(b-Ax)\right]_+\right\|^2\\
            & \quad + \| Qx^*+c+A^Ty\|_{\pran{I+\frac{1}{\xi}Q}^{-1}}^2+\left\|y-\left[y-\frac{1}{\xi}(b-Ax^*)\right]_+\right\|^2\\
            & \leq (2\xi+2\|Q\|+2/\xi+2\xi+1/\xi) G_{\xi}((x,y);z^*)=(4\xi+2\|Q\|+3/\xi)G_{\xi}((x,y);z^*) \ .
        \end{split}
    \end{align*}
    Divide both sides by $4\xi+2\|Q\|+3/\xi$, we reach that for any $z\in \mathcal Z$ with $\|z\|\leq R$,
    \begin{equation*}
        \alpha_\xi\mathrm{dist}^2(z,\mathcal Z^*)= \frac{\gamma_\xi^2}{4\xi+2\|Q\|+3/\xi}\mathrm{dist}^2(z,\mathcal Z^*)\leq G_{\xi}((x,y);z^*) \ .
    \end{equation*}
\end{proof}

\section{Lower Bound for Primal-Dual FOMs}\label{sec:lower_bound}

This section presents a lower bound of primal-dual FOMs for solving QP in terms of the quadratic growth constant $\alpha_\xi$. Together with Theorem \ref{thm:main}, it demonstrates that rAPDHG (Algorithm \ref{alg:rapd}) achieves the best possible worst-case convergence bound for a large class of primal-dual problems for QP.

Similar to that in \cite{applegate2023faster}, we consider the span-respecting algorithms in the primal-dual setting. This class of algorithms includes many primal-dual algorithms such as primal-dual hybrid gradient, extra-gradient method, gradient descent-ascent, etc.
\begin{mydef}[{\cite[Definition 3]{applegate2023faster}}]\label{def:def-span}
    A primal-dual algorithm is called first-order span-respecting for an unconstrained problem $\min_{x\in\mathbb R^n}\max_{y\in \mathbb R^m}\mathcal L(x,y)$ if its iterates satisfy
    \begin{align*}
        \begin{split}
            & \ x^k\in x^0+\mathrm{Span}\left\{\nabla_x\mathcal L(x^i,y^j):\;\forall i,j\in\{0,...,k-1\}\right\}\\
            & \ y^k \in y^0+\mathrm{Span}\left\{\nabla_y\mathcal L(x^i,y^j):\;\forall i\in\{0,...,k\},\forall j\in\{0,...,k-1\}\right\} \ .
        \end{split}
    \end{align*}
\end{mydef}

We utilize the classic lower-bound example of Nemirovski/Nesterov for strongly convex functions to create our worst-case QP instance for primal-dual first-order methods. In particular, we know that:
\begin{lem}[{\cite[Theorem 2.1.12]{nesterov2003introductory}}]\label{lem:lb}
    For all $L> \mu>0$, there exists a positive definite matrix $H\in \mathbb R^{n\times n}$ and vector $h\in\mathbb R^n$ such that $\|H\|=L$ and $\sigma_{\mathrm{min}}(H)=\mu$ and solving
    \begin{equation}\label{eq:lb-instance}
        \min_{x\in\mathbb R^n}\ f(x):=\frac 12 x^THx+h^Tx \ ,
    \end{equation}
    with any span-respecting minimization algorithm $w^k\in w^0+\mathrm{Span}\left\{\nabla f(w^i):\;\forall i\in\{0,...,k-1\}\right\}$ satisfies for $k<n$ that
    \begin{equation*}
        \mathrm{dist}(w^k,\mathcal W^*)\geq \pran{1-\sqrt\frac{\mu}{L}}^k\mathrm{dist}(w^0,\mathcal W^*) \ ,
    \end{equation*}
    {where $\mathcal W^*$ is the optimal solution set of \eqref{eq:lb-instance},} namely, it requires at least 
    \begin{equation*}
        \Omega\pran{\sqrt\frac{L}{\mu}\log\frac{1}{\epsilon}}=\Omega\pran{\sqrt\frac{\|H\|}{\sigma_{\mathrm{min}}(H)}\log\frac{1}{\epsilon}}
    \end{equation*}
    iterations to achieve $\epsilon$-accuracy {in the sense that distance to optimality is no larger than $\epsilon$}.
\end{lem}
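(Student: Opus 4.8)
The plan is to reconstruct the classical ``worst-case strongly convex quadratic'' of Nesterov underlying the cited Theorem~2.1.12; since the statement is standard I would only exhibit the construction and the two structural facts that drive it. By translation invariance of \eqref{eq:lb-instance} (translating the variable only changes $h$, leaving $L$, $\mu$, and $\mathrm{dist}(\cdot,\mathcal W^*)$ unchanged) I may assume $w^0=0$, so that $\mathrm{dist}(w^0,\mathcal W^*)=\|x^*\|$ for the unique minimizer $x^*=-H^{-1}h$. Writing $\kappa=L/\mu>1$ and letting $A\in\mathbb R^{n\times n}$ be the tridiagonal matrix $\mathrm{tridiag}(-1,2,-1)$, which satisfies $0\preceq A\preceq 4I$, I would take
\begin{equation*}
    H=\frac{\mu(\kappa-1)}{4}\,A+\mu I\ ,\qquad h=-\frac{\mu(\kappa-1)}{4}\,e_1\ ,
\end{equation*}
up to a harmless rescaling of $A$ making its extreme eigenvalues exactly $0$ and $4$; then $H\succ0$, $\sigma_{\mathrm{min}}(H)=\mu$, $\|H\|=\mu\kappa=L$, and $f$ in \eqref{eq:lb-instance} is $L$-smooth and $\mu$-strongly convex, as required.

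The first fact is \emph{support propagation}: since $H$ is tridiagonal and $h$ is a multiple of $e_1$, whenever $v$ is supported on coordinates $\{1,\dots,j\}$ the gradient $Hv+h$ is supported on $\{1,\dots,j+1\}$. Plugging this into the span-respecting recursion $w^k\in w^0+\mathrm{Span}\{\nabla f(w^i):i\leq k-1\}$ with $w^0=0$ shows, by induction, that $w^k$ is supported on $\{1,\dots,k\}$, so $w^k_i=0$ for $i>k$; this is exactly where the hypothesis $k<n$ is used, and the restriction is intrinsic, since any span-respecting method minimizes an $n$-dimensional quadratic exactly in $n$ steps (one passes to $n\to\infty$, i.e.\ the $\ell_2$ version, for an unrestricted statement). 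The second fact is the \emph{explicit minimizer}: solving $Hx^*=-h$ entry by entry yields a two-term linear recurrence whose decaying solution is $x^*_i=q^i$, where $q\in(0,1)$ is the root of $q=\frac{\kappa-1}{4}(1-q)^2$, namely $q=\frac{\sqrt\kappa-1}{\sqrt\kappa+1}$.

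Combining the two, for $k<n$,
\begin{equation*}
    \mathrm{dist}^2(w^k,\mathcal W^*)=\|w^k-x^*\|^2\ \geq\ \sum_{i=k+1}^{n}(x^*_i)^2\ \geq\ q^{2k}\sum_{i=1}^{n}(x^*_i)^2\ =\ q^{2k}\,\mathrm{dist}^2(w^0,\mathcal W^*)\ ,
\end{equation*}
hence $\mathrm{dist}(w^k,\mathcal W^*)\geq q^k\,\mathrm{dist}(w^0,\mathcal W^*)$ with $q=\frac{\sqrt\kappa-1}{\sqrt\kappa+1}\geq 1-2\sqrt{\mu/L}$ (the precise base here is $\frac{\sqrt\kappa-1}{\sqrt\kappa+1}$ rather than $1-\sqrt{\mu/L}$, but $-\log q=\Theta(\sqrt{\mu/L})$, so this does not affect the complexity statement that is actually used). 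In particular, if $\mathrm{dist}(w^k,\mathcal W^*)\leq\epsilon\,\mathrm{dist}(w^0,\mathcal W^*)$ then $q^k\leq\epsilon$, i.e.\ $k\geq\log(1/\epsilon)/(-\log q)=\Omega\pran{\sqrt{L/\mu}\,\log(1/\epsilon)}$. The only delicate points I foresee are bookkeeping rather than conceptual: normalizing the spectrum of $A$ so that $\|H\|=L$ and $\sigma_{\mathrm{min}}(H)=\mu$ hold with equality, and handling the boundary row of the finite-dimensional $A$ (the true minimizer deviates from $(q,q^2,\dots,q^n)$ near the last coordinate, so one either works in the $\ell_2$ version and truncates, or uses Nesterov's boundary-adjusted matrix) --- both routine. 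The conceptual heart, and the main obstacle if any, is the support-propagation step: it is what forces $w^k$ to miss the geometrically heavy tail coordinates of $x^*$.
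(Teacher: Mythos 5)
The paper does not prove this lemma at all --- it is imported verbatim as a citation to Nesterov's Theorem~2.1.12 --- so there is no in-paper argument to compare against; your reconstruction is the standard one (tridiagonal worst-case quadratic, support propagation, geometric minimizer) and is essentially correct. Two small points you already flag but should be handled explicitly if this were written out: for finite $n$ the displayed inequality $\sum_{i=k+1}^{n}(x^*_i)^2\geq q^{2k}\sum_{i=1}^{n}(x^*_i)^2$ is actually reversed for the exact geometric sequence (the left side has $n-k$ terms, the right has $n$), so one must pass to the $\ell_2$ version or absorb a constant; and the true base is $q=\frac{\sqrt\kappa-1}{\sqrt\kappa+1}<1-\sqrt{\mu/L}$, so the lemma as stated in the paper is in fact slightly stronger than what Nesterov proves --- harmless here, since only the $\Omega\bigl(\sqrt{L/\mu}\log(1/\epsilon)\bigr)$ rate is used downstream.
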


The lower complexity result for convex QP is established in the following theorem.
\begin{thm}\label{thm:lb}
Consider span-respecting algorithms for solving quadratic programs \eqref{eq:minimax}.

(i) There exists a QP instance such that any span-respecting algorithm requires at least
    \begin{equation}\label{eq:lb-bg}
        \Omega\left(\min_{\xi>0}\left\{\max\left\{\frac{\|A\|}{\alpha_{\xi}},\frac{\|A\|}{\xi} \right\} \right\} \log\frac{1}{\epsilon}\right)
    \end{equation}
    iterations to find an $\epsilon$-accurate solution {in the sense that distance to optimality is no larger than $\epsilon$}.

(ii) There exists a QP instance such that any span-respecting algorithm  requires at least
    \begin{equation}\label{eq:lb-sc}
        \Omega\left(\min_{\xi>0}\left\{\max\left\{ \sqrt{\frac{\|Q\|}{\alpha_{\xi}}}, \sqrt{\frac{\|Q\|}{\xi}} \right\} \right\} \log\frac{1}{\epsilon}\right)
    \end{equation}
    iterations to find an $\epsilon$-accurate solution {in the sense that distance to optimality is no larger than $\epsilon$}.    
\end{thm}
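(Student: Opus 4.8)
I would prove each part by embedding a classical first-order lower-bound instance into \eqref{eq:minimax} as a QP on which the sign constraint $y\ge 0$ is invisible to the dynamics near the optimum, so that any span-respecting primal--dual algorithm (Definition~\ref{def:def-span}) reduces to a span-respecting method for a strongly convex quadratic (part (ii)) or for a bilinear matrix game (part (i)); I would then compute the quadratic growth constant $\alpha_\xi$ of the chosen instance and read off the bounds \eqref{eq:lb-sc}, \eqref{eq:lb-bg} by carrying out the $\min_{\xi>0}$.

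\textbf{Part (ii).} Take $Q=H$ and $c=h$ with $(H,h)$ the hard pair of Lemma~\ref{lem:lb}, set $A=0$ (or $A=\varepsilon I$ with $\varepsilon$ negligible), and pick $b$ componentwise large and positive; then the KKT system \eqref{eq:KKT} has the unique solution $z^*=(-H^{-1}h,0)$. Since $\nabla_x\mathcal L(x,y)=Qx+c+A^Ty=Hx+h$ does not involve $y$, Definition~\ref{def:def-span} forces the $x$-iterates of any span-respecting primal--dual algorithm to be span-respecting for $f(x)=\frac12 x^THx+h^Tx$, so by Lemma~\ref{lem:lb}, $\mathrm{dist}(z^{k},\mathcal Z^*)\ge\mathrm{dist}(x^{k},\mathcal W^*)\ge(1-\sqrt{\sigma_{\mathrm{min}}(H)/\|H\|})^{k}\mathrm{dist}(z^{0},\mathcal Z^*)$ whenever $y^0=0$ and $k<n$, i.e.\ $\Omega(\sqrt{\|H\|/\sigma_{\mathrm{min}}(H)}\log(1/\epsilon))$ iterations are required. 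It then remains to compute $\alpha_\xi$: substituting this instance into the closed form \eqref{eq:gap-explicit} (via Lemma~\ref{lem:pd=p-d}) makes the constraint term vanish because $b>0$ and collapses the smoothed gap to $G_\xi(z;z^*)=\frac12\|x-x^*\|_Q^2+b^Ty$ for $y\ge 0$, which is independent of $\xi$; with $\|x-x^*\|_Q^2\ge\sigma_{\mathrm{min}}(H)\|x-x^*\|^2$ and $b$ large enough relative to the radius $R$, this yields $\alpha_\xi=\sigma_{\mathrm{min}}(H)/2$ for every $\xi>0$, so the choice $\xi=\sigma_{\mathrm{min}}(H)/2$ makes $\min_{\xi>0}\max\{\sqrt{\|Q\|/\alpha_\xi},\sqrt{\|Q\|/\xi}\}=\Theta(\sqrt{\|H\|/\sigma_{\mathrm{min}}(H)})$, which matches the iteration count and proves \eqref{eq:lb-sc}.

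\textbf{Part (i).} Take $Q=0$ and let $A$ be a scaled matrix square root of the hard matrix $H$ of Lemma~\ref{lem:lb}, with $b$ and $c$ chosen so the unique KKT point has $Ax^*=b$ and $y^*>0$ componentwise; then $y\ge 0$ is inactive near $z^*$, and since $\nabla_x\mathcal L=c+A^Ty$ and $\nabla_y\mathcal L=Ax-b$, the iterates of a span-respecting algorithm live in (multi-seed) Krylov subspaces of $A^TA$ and $AA^T$, so the classical bilinear-game lower bound (an even-power Krylov argument in the spirit of Lemma~\ref{lem:lb}, cf.\ \cite{applegate2023faster}) gives $\mathrm{dist}(z^k,\mathcal Z^*)\ge(1-c_0/\kappa)^{k}\mathrm{dist}(z^0,\mathcal Z^*)$ with $\kappa=\|A\|/\sigma_{\mathrm{min}}(A)$, hence $\Omega(\kappa\log(1/\epsilon))$ iterations. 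For $\alpha_\xi$ I would use Lemma~\ref{lem:pd=p-d} with $Q=0$ (so $(I+\frac1\xi Q)^{-1}=I$ and the $\|x-x^*\|_Q$ term drops) together with a direct piecewise-quadratic computation of the primal and dual parts of $G_\xi$, concluding that on any bounded set $\alpha_\xi=\Theta(\sigma_{\mathrm{min}}(A)^2/\xi)$; then $\min_{\xi>0}\max\{\|A\|/\alpha_\xi,\|A\|/\xi\}$, whose optimum sits at $\xi\asymp\sigma_{\mathrm{min}}(A)$, equals $\Theta(\|A\|/\sigma_{\mathrm{min}}(A))=\Theta(\kappa)$, proving \eqref{eq:lb-bg}.

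\textbf{Main obstacle.} Part (ii) is essentially bookkeeping once the instance is fixed, so the hard part is part (i): making the reduction to the bilinear matrix game rigorous in the presence of $y\ge 0$ (one must verify the span-respecting iterates never probe the projection, which pins down $b$, $c$ and the initialization, or else pass through a paired-inequality encoding of $Ax=b$ and argue carefully about the resulting non-singleton optimal dual set), and \emph{simultaneously} establishing $\alpha_\xi=\Theta(\sigma_{\mathrm{min}}(A)^2/\xi)$ globally on $B_R(0)$ rather than only near $\mathcal Z^*$, which requires a genuine piecewise-quadratic sharpness estimate and a correct identification of the governing singular value (the smallest singular value of the submatrix of rows indexed by active constraints with positive multipliers, so the construction must make that submatrix square and control its conditioning as intended). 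Reconciling these two demands — the constraint should be invisible to the algorithm yet dictate $\alpha_\xi$ — is the crux of the proof.
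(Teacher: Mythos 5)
Your overall strategy coincides with the paper's: embed the Nesterov/Nemirovski hard quadratic of Lemma~\ref{lem:lb}, show that span-respecting primal--dual iterates collapse to span-respecting iterates for the classical problem (a strongly convex quadratic for (ii), a bilinear game for (i)), compute $\alpha_\xi$ for that instance, and carry out the $\min_{\xi>0}$ to see that the stated formula equals the known iteration lower bound. Part (ii) of your proposal differs from the paper only in the instance: you take $A=0$ with $b>0$ large, so the smoothed gap reduces to $\frac12\|x-x^*\|_Q^2+b^Ty$ and $\alpha_\xi\asymp\mu/2$ uniformly in $\xi$, whereas the paper appends one decoupled coordinate carrying a single constraint entry $\sqrt{\|Q\|\mu}$ (with unconstrained dual) and gets $\alpha_\xi=\min\{\mu^2/(2\xi),\mu/2\}$. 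Both evaluate $\min_\xi\max\{\cdot\}$ to $\Theta(\sqrt{\|Q\|/\mu})$, and your version is arguably cleaner; this part of your argument is essentially complete.

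The substantive issue is part (i), where you have manufactured a difficulty the paper does not face. The paper's worst-case instance is the bilinear game $\min_x\max_y\, y^TAx+b^Ty$ with an \emph{unconstrained} dual variable: the smoothed duality gap then has the exact closed form $\frac{1}{2\xi}$ times the squared norm of the symmetric block matrix with off-diagonal blocks $A^T$ and $A$ applied to $z-z^*$, so $\alpha_\xi=\sigma_{\min}^+(A)^2/(2\xi)$ holds globally on $\mathcal Z$ in one line, with no piecewise-quadratic sharpness estimate and no active-set analysis, and the iteration lower bound is quoted directly from Corollary~1 of \cite{applegate2023faster}. By insisting on retaining $y\ge 0$ with a strictly positive $y^*$, you create exactly the two obstacles you name at the end --- verifying that the iterates never touch the projection, and proving $\alpha_\xi=\Theta(\sigma_{\min}(A)^2/\xi)$ on all of $B_R(0)$ rather than locally --- and you leave both unresolved, so as written part (i) is an incomplete sketch. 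The fix is simply to follow the paper's route (accepting, as the paper implicitly does, that the hard instance is stated as a saddle-point problem with unconstrained $y$ rather than literally in the form \eqref{eq:minimax}); if you do want the inequality-constrained version, the missing global sharpness argument is a genuine piece of work, not bookkeeping.
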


\begin{proof}

    (i) 
    We prove it by constructing a worst-case instance based on bilinear games with the following QP form:
    \begin{equation}\label{eq:minimax-bg}
    \min_x\max_{y}\; \mathcal L(x,y):= y^TAx+b^Ty \ ,
    \end{equation}
    where $A=H^{1/2}$, $b=H^{-1/2}h$, and $H$ and $h$ are specified in the worst-case instance in Lemma \ref{lem:lb}.

    In Corollary 1 of \cite{applegate2023faster}, it was shown that any span-respecting primal-dual algorithm for solving \eqref{eq:minimax-bg} requires at least
    \begin{equation}\label{eq:lb-bg-2}
        \Omega\pran{\frac{\|A\|}{\sigma_{\mathrm{min}}^+(A)}\log\frac{1}{\epsilon}}
    \end{equation}
    iterations to achieve $\epsilon$-accuracy. Below we will prove \eqref{eq:lb-bg} matches \eqref{eq:lb-bg-2} on bilinear games \eqref{eq:minimax-bg} to conclude the proof of part (i).

We characterize the growth constant $\alpha_\xi$ for bilinear games \eqref{eq:minimax-bg}. The growth constant $\alpha_\xi$ of \eqref{eq:minimax-bg} for any $\xi>0$ is controlled by the minimum positive singular value of matrix $A$. Suppose a finite solution to \eqref{eq:minimax-bg} exists. Then by the definition of smoothed duality gap and straightforward calculation, we have
    \begin{align*}
        \begin{split}
            G_{\xi}(z;z^*) & \ = \max_{\hat z}\left\{ \hat y^TAx+b^T\hat y-y^TA\hat x-b^Ty-\frac{\xi}{2}\|\hat x-x^*\|^2-\frac{\xi}{2}\|\hat y-y^*\|^2\right\} \\ 
            & \ = \max_{\hat y} \left\{\hat y^TAx+b^T\hat y-\frac{\xi}{2}\|\hat y-y^*\|^2 \right\}+\max_{\hat x} \left\{y^TA\hat x-b^Ty-\frac{\xi}{2}\|\hat x-x^*\|^2\right\}\\ 
            & \ = \frac{1}{2\xi}\left\|\begin{pmatrix}
                0 & A^T\\A & 0
            \end{pmatrix}\begin{pmatrix}
                x-x^* \\ y-y^*
            \end{pmatrix}\right\|^2 \\
            & \ \geq \frac{\sigma_{\mathrm{min}}^+(A)^2}{2\xi}\mathrm{dist}^2(z,\mathcal Z^*) \ ,
        \end{split}
    \end{align*}
    where the third equation utilizes $A^Ty^*=0$ and $Ax^*=b$ and inequality follows from the definition of $\sigma_{\mathrm{min}}^+(A)$.
Thus the smoothed duality gap of \eqref{eq:minimax-bg} satisfies quadratic growth on $\mathcal Z$ with $\alpha_\xi=\frac{\sigma_{\mathrm{min}}^+(A)^2}{2\xi}$, i.e., for all $z^*\in\mathcal Z^*$ and any $z\in\mathcal Z$,
    \begin{equation*}
        G_{\xi}(z;z^*) \geq \frac{\sigma_{\mathrm{min}}^+(A)^2}{2\xi} \mathrm{dist}^2(z,\mathcal Z^*) \ .
    \end{equation*}
Thus it holds that
\begin{equation*}
    \Omega\pran{\min_{\xi>0}\left\{\max\left\{\frac{\|A\|}{\alpha_\xi},\frac{\|A\|}{\xi}\right\}\right\}\log\frac{1}{\epsilon}}=\Omega\pran{\frac{\|A\|}{\sigma_{\mathrm{min}}^+(A)}\log\frac{1}{\epsilon}} \ ,
\end{equation*}
which finishes the proof by using Corollary 1 of \cite{applegate2023faster}.

(ii) We prove by constructing a worst-case instance based on strongly convex quadratic program with the following form.
\begin{align}\label{eq:minimax-sc}
\begin{split}
    \min_{x=(\tilde x,x_0)\in \mathbb R^{n+1}}\max_{y=(\tilde y,y_0)\in\mathbb R^{n+1}} \frac 12 x^TQx+c^Tx+y^TAx \ ,
\end{split}
\end{align}
where $Q=\begin{pmatrix}
    H & 0_{n\times 1} \\ 0_{1\times n} & 0
\end{pmatrix}\in \mathbb R^{(n+1)\times (n+1)}$, $c=\begin{pmatrix}
    h \\ 0
\end{pmatrix}\in \mathbb R^{n+1}$, $A=\begin{pmatrix}
    0_{n\times n} & 0_{n\times 1}\\0_{1\times n} & \sqrt{\|Q\|\mu}
\end{pmatrix}\in \mathbb R^{(n+1)\times (n+1)}$. 

Note that essentially \eqref{eq:minimax-sc} is equivalent to the following unconstrained quadratic minimization problem
\begin{align}\label{eq:min-sc}
\begin{split}
    \min_{\tilde x\in \mathbb R^{n}}\; f(x):=\frac 12 \tilde x^TH\tilde x+h^T\tilde x \ ,
\end{split}
\end{align}
and any primal-dual span-respecting algorithm has
\begin{equation*}
    \tilde x^k\in \tilde x^0+\mathrm{Span}\left\{H\tilde x^0+h,...,H\tilde x^{k-1}+h\right\}=x^0+\mathrm{Span}\left\{\nabla f(\tilde x^0),...,\nabla f(\tilde x^{k-1})\right\}
\end{equation*}

Thus $\tilde x^k$ can be viewed as iterates of span-respecting algorithms for solving \eqref{eq:min-sc} and it is shown in Lemma \ref{lem:lb} that any span-respecting algorithm for solving \eqref{eq:min-sc} requires at least 
\begin{equation}\label{eq:lb-sc-2}
    \Omega\pran{\sqrt{\frac{\|H\|}{\sigma_{\mathrm{min}}(H)}}\log\frac{1}{\epsilon}}
\end{equation}
iterations to achieve $\epsilon$-accuracy. Below we will prove \eqref{eq:lb-sc} on QP \eqref{eq:minimax-sc} matches \eqref{eq:lb-sc-2} to finish the proof of part (ii).

We characterize the growth constant $\alpha_\xi$ is characterized by strong convexity $\mu$.
Let $\xi>0$. It holds by the definition of smoothed duality gap that
\begin{align*}
    G_{\xi}(z;z^*)& = \frac{1}{2}x^TQx+c^Tx+\max_{\hat y}\left\{{\hat y}^TAx-\frac{\xi}{2}\|\hat y-y^*\|^2\right\} + \max_{\hat x} \left\{-\frac{1}{2}{\hat x}^TQ\hat x-c^T\hat x-y^TA\hat x-\frac{\xi}{2}\|\hat x-x^*\|^2\right\}\\
    & = \frac{1}{2}\|x-x^*\|_Q^2+\frac{1}{2\xi}\|Ax\|^2+\frac{1}{2\xi}\|c+A^Ty+Qx^*\|_{\pran{I+\frac{1}{\xi}Q}^{-1}}^2-y^TAx^*\\
    & = \frac{1}{2}\|\tilde x-\tilde x^*\|_{\widetilde Q}^2+\frac{\|Q\|\mu}{2\xi}x_0^2+\frac{\|Q\|\mu}{2\xi}y_0^2\geq \frac{1}{2}\|\tilde x-\tilde x^*\|_{\widetilde Q}^2+\frac{\mu^2}{2\xi}x_0^2+\frac{\mu^2}{2\xi}y_0^2\\
    & \geq \min\left\{\frac{\mu^2}{2\xi},\frac{\mu}{2}\right\}(\mathrm{dist}^2(x,\mathcal X^*)+\mathrm{dist}^2(y,\mathcal Y^*))=\min\left\{\frac{\mu^2}{2\xi},\frac{\mu}{2}\right\} \mathrm{dist}^2(z,\mathcal Z^*) \ ,
\end{align*}
where the second equality is from direct calculation, the first inequality follows from $\|Q\|\geq \mu$ and the last inequality uses $x_0^*=0$ and $\mathcal Y^*=\mathbb R^n\times\{0\}$. This implies the smoothed duality gap of \eqref{eq:minimax-sc} satisfies quadratic growth on $\mathcal Z$ with $\alpha_\xi=\min\left\{\frac{\mu^2}{2\xi},\frac{\mu}{2}\right\}$, that is,
    \begin{equation*}
        G_{\xi}(z;z^*) \geq \min\left\{\frac{\mu^2}{2\xi},\frac{\mu}{2}\right\} \mathrm{dist}^2(z,\mathcal Z^*) \ .
    \end{equation*}
Thus complexity \eqref{eq:lb-sc} on QP \eqref{eq:minimax-sc} equals rate \eqref{eq:lb-sc-2} 
\begin{equation*}
    \Omega\pran{\min_{\xi>0}\left\{\max\left\{\sqrt{\frac{\|Q\|}{\alpha_{\xi}}},\sqrt{\frac{\|Q\|}{\xi}}\right\}\right\}\log\frac{1}{\epsilon}}=\Omega\pran{\sqrt{\frac{\|Q\|}{\mu}}\log\frac{1}{\epsilon}}=\Omega\pran{\sqrt{\frac{\|H\|}{\sigma_{\mathrm{min}}(H)}}\log\frac{1}{\epsilon}} \ .
\end{equation*}
\end{proof}
    
    Combining Theorem \ref{thm:main} and \ref{thm:lb}, it shows that rAPDHG (Algorithm \ref{alg:rapd}) exhibits convergence rate matching the lower complexity on the constructed worst-case instances upto the additional $\|Q\|/\|A\|$ term. Indeed, the term $\|Q\|/\|A\|$ comes from the change of norm, and can be diminished by rescaling the constraints and/or the objective. This implies the optimality of Algorithm \ref{alg:rapd} among span-respecting first-order methods.

\section{Preliminary numerical experiments}\label{sec:numerical}
In this section, we present preliminary numerical experiments, which demonstrate that both restarting and acceleration can improve the performance of the algorithms.

{\bf Dataset.} We utilize 134 instances from the Maros–M\'{e}sz\'{a}ros test set \cite{maros1999repository}, which is a standard benchmark set for convex quadratic programming.

{\bf Progress metric.}  We utilize relative KKT error as the progress metric for different algorithms. For a solution $z=(x,y)$ with $y\geq 0$, the relative primal residual, dual residual and primal-dual gap for \eqref{eq:minimax} are given as follows
{\footnotesize
\begin{equation*}
    \begin{aligned}
        r_{\mathrm{primal}} = \frac{\|[Ax-b]^+\|_{\infty}}{1+\max\{\|Ax\|_{\infty},\|b\|_\infty\}},\  r_{\mathrm{dual}} = \frac{\|Qx+A^Ty+c\|_\infty}{1+\max\{\|Qx\|_{\infty},\|A^Ty\|_\infty,\|c\|_\infty\}},\ r_{\mathrm{gap}}=\frac{|x^TQx+c^Tx+b^Ty|}{1+\max\{|\frac 12 x^TQx+c^Tx|,|\frac 12 x^TQx+b^Ty|\}}
    \end{aligned}
\end{equation*}
}

The relative KKT error is defined as the maximal relative primal residual, dual residual and primal-dual gap 
\begin{equation*}
    \mathrm{relKKT}(z) = \max\{r_{\mathrm{primal}},r_{\mathrm{dual}},r_{\mathrm{gap}}\} \ .
\end{equation*}
The algorithm terminates when relative KKT error is smaller than the termination tolerance $\epsilon$, namely,
\begin{equation}\label{eq:terminate}
    \mathrm{relKKT}(z)\leq \epsilon \ .
\end{equation}
We consider two relative KKT error levels, $\epsilon=10^{-3}$ as low accuracy and $\epsilon=10^{-6}$ as high accuracy. 

{\bf Preprocessing.} FOMs may suffer from slow convergence when solving real-world instances due to their ill-conditioning nature. Before running FOMs, we utilize the diagonal preconditioning heuristic developed in PDLP for linear programming~\cite{applegate2021practical} to rescale the problems for obtaining a better condition number.  More specifically, we rescale the matrix $A$ to $\tilde A=D_1AD_2$ with positive definite diagonal matrices $D_1$ and $D_2$. Both vectors $b$ and $c$ are correspondingly rescaled as $\tilde b=D_1b$ and $\tilde c=D_2c$. The objective matrix $Q$ is also rescaled as $\tilde Q=D_2QD_2$. The matrices $D_1$ and $D_2$ are obtained by running 10 steps of Ruiz scaling \cite{ruiz2001scaling} followed by an $l_2$ scaling and a Pock-Chambolle scaling \cite{pock2011diagonal} on the matrix $\begin{pmatrix}
    Q & A^T \\ A & 0
\end{pmatrix}$. More details about this preconditioning scheme can be found in~\cite{applegate2021practical}.

{\bf Algorithms.} We compare the performance of four algorithms: PDHG~\cite{chambolle2011first}, accelerated PDHG~\cite{chen2014optimal}, restarted PDHG~\cite{applegate2023faster}, and restarted accelerated PDHG (rAPDHG, Algorithm \ref{alg:rapd}). All methods use all-zero vectors as the initial starting points.

We consider three restarting schemes: adaptive restart, tuned fixed frequency restart and no restart. 
\begin{itemize}
    \item No restart: we run the base algorithm (PDHG or accelerated PDHG) without restarting.
    \item Tuned fixed frequency restart: the theoretical guarantee is established for accelerated PDHG with fixed frequency restart. However, we must estimate the quadratic growth $\alpha_\xi$ to choose the restart frequency. Here we run restarted accelerated PDHG several times with frequency selected from $\{10,100,1000\}$ and report the best result.
    \item Adaptive restart: in practice, tuning the restart frequency can be highly costly. Here we propose a heuristic to restart adaptively. In this scheme, we restart when the relative KKT error has a contraction. More specifically, we restart the base algorithm whenever the relative KKT error is halved:
    \begin{equation*}
        \mathrm{relKKT}(\bar z^{n,k})\leq \frac{1}{2} \mathrm{relKKT}(z^{n,0}) \ .
    \end{equation*}
    A similar scheme is also used in the GPU-based LP solver cuPDLP~\cite{lu2023cupdlp,lu2023cupdlpc}.
\end{itemize}

{\bf Shifted geometric mean.} We report the shifted geometric mean of iterations required across instances for each method. More precisely, shifted geometric mean is defined as $\sqrt[N]{\prod_{i=1}^N (n_i+k)}-k$ and we shift by $k=10$. If the instance cannot be solved within iteration limit $200000$, we simply set $n_i=200000$.
\begin{table}[ht!]
\centering
\begin{tabular}{|c|c|c|c|c|}
\hline
          & Algorithm \ref{alg:rapd} & {Accelerated PDHG}    & Restarted PDHG     & {PDHG}                                                \\  \hline
Number of solved instances & \multicolumn{1}{c|}{96}                & 86         & \multicolumn{1}{c|}{89}              & 44         \\ \hline
Number of iterations SGM10     & \multicolumn{1}{c|}{3535.26}     & 7189.47    & \multicolumn{1}{c|}{6333.12}      & 74611.79   \\ \hline
\end{tabular}
\caption{The number of solved instances and shifted geometric mean of the four algorithms for solving 134 Maros–M\'{e}sz\'{a}ros QP instances to low accuracy $\epsilon= 10^{-3}$. {Adaptive restart is used for Algorithm \ref{alg:rapd} and Restarted PDHG.}}
\label{tab:low}
\end{table}

\begin{table}[ht!]
\centering
\begin{tabular}{|c|c|c|c|c|}
\hline
& Algorithm \ref{alg:rapd} & {Accelerated PDHG}    & Restarted PDHG     & {PDHG}  \\ \hline
Number of solved instances & \multicolumn{1}{c|}{69}               & 33         & \multicolumn{1}{c|}{58}                & 1         \\ \hline
Number of iterations SGM10     & \multicolumn{1}{c|}{12803.94}     & 91482.46    & \multicolumn{1}{c|}{22136.86}      & 177231.76   \\ \hline
\end{tabular}
\caption{The number of solved instances and shifted geometric mean of the four algorithms for solving 134 Maros–M\'{e}sz\'{a}ros QP instances to high-accuracy $\epsilon=10^{-6}$. {Adaptive restart is used for Algorithm \ref{alg:rapd} and Restarted PDHG.}}
\label{tab:high}
\end{table}

\begin{figure}[ht!]
    \hspace{-1cm}
	\begin{tabular}{c c c c}
		& \includegraphics[width=0.33\textwidth]{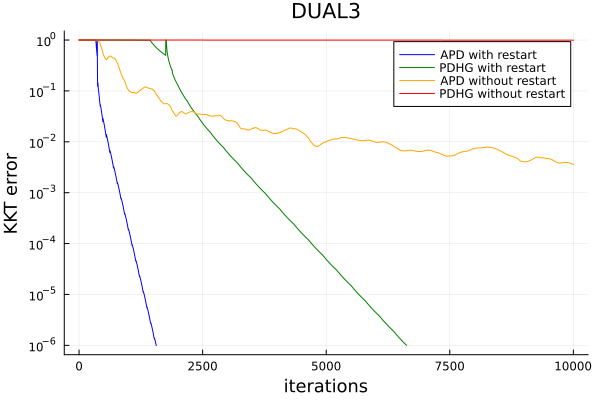}
        & \includegraphics[width=0.33\textwidth]{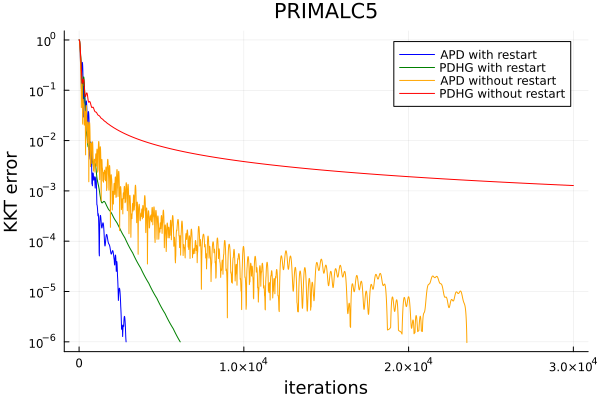}
        & \includegraphics[width=0.33\textwidth]{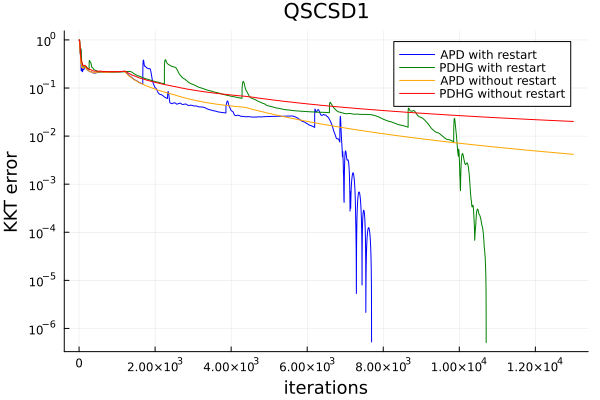}\\
        & \includegraphics[width=0.33\textwidth]{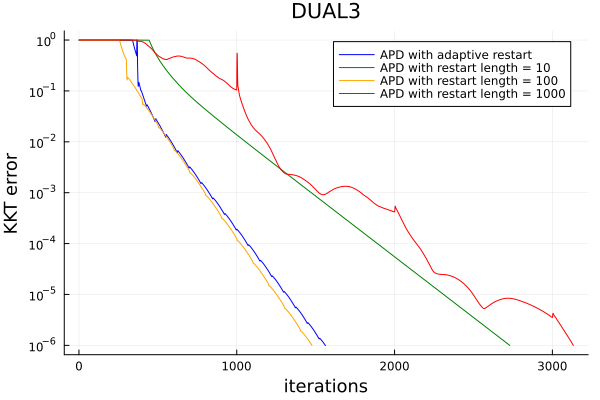}
        & \includegraphics[width=0.33\textwidth]{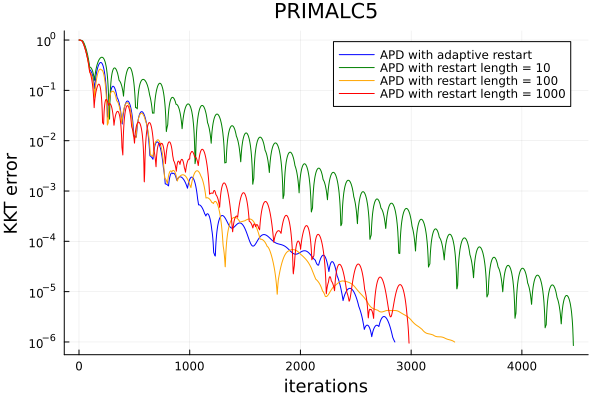}
        & \includegraphics[width=0.33\textwidth]{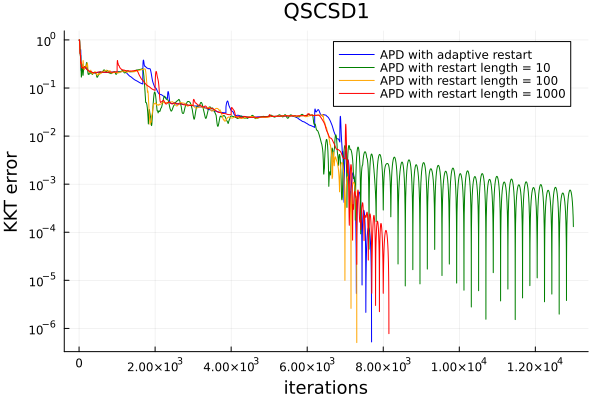}
	\end{tabular}
	\caption{Effects of momentum and restart on convergence behaviors. The first row of figures presents the relative KKT error versus the number of iterations for the four algorithms on the three instances. The second row of figures presents the performance of Algorithm \ref{alg:rapd} with different fixed frequency restart frequencies.}
	\label{fig:real}
\end{figure}

{\bf Results.} Table \ref{tab:low} and \ref{tab:high} summarize the performances of the four algorithms on 134 QP instances from the Maros–M\'{e}sz\'{a}ros test set. The number of solved instances and iterations required are reported. The two tables show that restarting and momentum play a significant role when solving QP. For momentum, in the case of low accuracy, accelerated PDHG without restart solves 86 instances, while PDHG without restart only solves 44. This result is more significant when we look for high-accuracy solutions: only one instance is solved with PDHG, while 33 instances can be solved with accelerated PDHG. Furthermore, it is observed that the adaptive restart scheme boosts the convergence. The iterations of accelerated PDHG with restart are almost half of those of restarted PDHG, and meanwhile, restarted accelerated PDHG can solve more instances than restarted PDHG in both low and high-accuracy cases. To sum up, the experiments demonstrate the effectiveness of our algorithmic enhancements on top of PDHG, i.e., momentum and restart, for solving QP problems, and they showcase the superior numerical performances of Algorithm \ref{alg:rapd}.

Figure \ref{fig:real} presents the effects of momentum and restart on the convergence behavior of algorithms. We choose three representative instances, \texttt{DUAL3}, \texttt{PRIMALC5}, and \texttt{QSCSD1} from Maros–M\'{e}sz\'{a}ros test set to illustrate their behaviors, and we observe similar performance in most of the other tested instances. In particular, the significant effect of momentum can be clearly observed, as shown in the first row of Figure \ref{fig:real}. Without restart, vanilla accelerated PDHG and vanilla PDHG are hard to achieve $10^{-3}$, except for accelerated PDHG on \texttt{PRIMALC5}. Besides, in the presence of restarting, momentum helps to reduce the needed iterations significantly. Furthermore, in the first row of Figure \ref{fig:real}, we see both PDHG and accelerated PDHG suffer slow sublinear convergence. In contrast, restarts trigger linear convergence of both accelerated PDHG and PDHG. In the second row of Figure \ref{fig:real}, we compare the adaptive restart scheme with three fixed restart frequencies. Notably, the number of iterations is fairly similar for the adaptive scheme and the best-fixed frequency. The benefit of adaptive restart comes from not searching over the restart lengths and thus requires much lower computational cost.

\section{PDQP.jl: A Julia implementation of rAPDHG for QP}\label{sec:PDQP}
In this section, we present PDQP.jl, a QP solver based on rAPDHG supporting both GPU and CPU. Unlike the preliminary experiments stated in the previous section, whose goal is to demonstrate the effectiveness of acceleration and restarting, the implementation and experiments stated in this section are much more sophisticated. The implementation of the solver is available at \href{https://github.com/jinwen-yang/PDQP.jl}{https://github.com/jinwen-yang/PDQP.jl}. We compare the numerical performance of PDQP and two popular QP solvers, SCS and OSQP. Notice that PDQP.jl is a preliminary prototype implementation in Julia, and the goal here is to demonstrate the potential of the proposed algorithm. We anticipate a more sophisticated C/C++ implementation that will have superior performance to PDQP.jl.

In PDQP, we utilize several algorithmic enhancements upon Algorithm \ref{alg:rapd} to improve its practical performance. More specifically,
\begin{itemize}
    \item {\bf Preprocessing.} The same strategy of preprocessing as discussed in Section \ref{sec:numerical} is employed in PDQP.
    \item {\bf Adaptive restart.} PDQP adopts similar adaptive restarting strategy of PDLP~\cite{applegate2021practical,lu2023cupdlp}. The restart scheme is described as follows: Define the restart candidate as 
\begin{equation*}
    z_c^{n,t+1}:=\mathrm{GetRestartCandidate}(z^{n,t+1},\bar z^{n,t+1})=\begin{cases}
        z^{n,t+1},& \mathrm{relKKT}(z^{n,t+1})<\mathrm{relKKT}(\bar z^{n,t+1}) \\ \bar z^{n,t+1},& \mathrm{otherwise} \ .
    \end{cases}
\end{equation*} 
The algorithm restarts if one of three conditions holds:
\begin{enumerate}
    \item[(i)] (Sufficient decay in relative KKT error)
    \begin{equation*}
        \mathrm{relKKT}(z_c^{n,t+1})\leq \beta_{\mathrm{sufficient}}\mathrm{relKKT}(z^{n,0}) \ ,
    \end{equation*}
    \item[(ii)] (Necessary decay + no local progress in relative KKT error)
    \begin{equation*}
        \mathrm{relKKT}(z_c^{n,t+1})\leq \beta_{\mathrm{necessary}}\mathrm{relKKT}(z^{n,0})\quad \text{and} \quad \mathrm{relKKT}(z_c^{n,t+1})>\mathrm{relKKT}(z_c^{n,t}) \ ,
    \end{equation*}
    \item[(iii)] (Long inner loop)
    \begin{equation*}
        t\geq \beta_{\mathrm{artificial}}k \ ,
    \end{equation*}
\end{enumerate}
where {$k$ is the iteration number of the current epoch} and parameters $\beta_{\mathrm{sufficient}}=0.2$, $\beta_{\mathrm{necessary}}=0.8$ and $\beta_{\mathrm{artificial}}=0.36$.

    \item {\bf Adaptive step-size and primal weight.} The primal and the dual step-sizes of PDQP are reparameterized as
    \begin{equation*}
        \tau = \eta/\omega,\; \sigma=\eta\omega\quad \text{with}\;\  \eta,\omega>0\ ,
    \end{equation*}
    where $\eta$ (called step-size) controls the scale of the step-sizes, and $\omega$ (called primal weight) balances the primal and the dual progress. The step-size $\eta^{n,k}$ at inner iteration $k$ is selected as 
    \begin{equation*}
        \eta^{n,k} = \begin{cases}
            \frac{1.98}{\frac{\|Q\|}{\omega}+\sqrt{4\|A\|^2+\frac{\|Q\|^2}{\omega^2}}}, & k=0\\
            \min\left\{  \pran{1+\frac{1}{k}}\eta^{n,k-1}, \frac{0.99(k+2)}{\frac{\|Q\|}{\omega}+\sqrt{\|A\|^2(k+2)^2+\frac{\|Q\|^2}{\omega^2}}}  \right\}, & k \geq 1
        \end{cases}
    \end{equation*}
    while the update of primal weight is specific during restart occurrences, thus infrequently. More precisely, the initialization of $\omega$ involves the expression:
    \begin{equation*}
    \mathrm{InitializePrimalWeight}(c,b):=\begin{cases}
        \frac{\|c\|_2}{\|b\|_2},\; & \text{if } \|c\|_2,\|b\|_2>\epsilon_{\mathrm{zero}}\\
        1, \; & \mathrm{otherwise}
    \end{cases}
    \end{equation*}
    where $\epsilon_{\mathrm{zero}}$ denotes a small nonzero tolerance. Let $\Delta_x^n=\|x^{n,0}-x^{n-1,0}\|_2$ and $\Delta_y^n=\|y^{n,0}-y^{n-1,0}\|_2$. PDQP initiates the primal weight update at the beginning of each new epoch with $\theta=0.2$.
    {\small
    \begin{equation*}
    \mathrm{PrimalWeightUpdate}(z^{n,0},z^{n-1,0},\omega^{n-1}):=\begin{cases}
        \exp\pran{\theta \log\pran{\frac{\Delta_y^n}{\Delta_x^n}}+(1-\theta)\log\omega^{n-1}},\; & \Delta_x^n,\Delta_y^n>\epsilon_{\mathrm{zero}}\\
        \omega^{n-1}, \; & \mathrm{otherwise}
    \end{cases}
    \end{equation*}
    }
    The adaptive step-size rule and the primal weight update are extensions of those in PDLP \cite{applegate2021practical} to the QP case.
\end{itemize}

{
While PDQP's enhancements are theoretically motivated, the focus of this section and the PDQP solver is on practical performance and some of these modifications may not preserve the original theoretical guarantees, as detailed below (similar gap between theory and practice also appears in other solvers, such as PDLP~\cite{applegate2021practical}):
    \begin{itemize}
        \item Diagonal preconditioning preserves theoretical guarantees since it can be interpreted as applying rAPDHG to a QP instance with modified data.
        \item Adaptive restart does not readily preserve convergence guarantee. Still, we conjecture that a proof of convergence could be established based on the theoretical guarantees derived for restarting with a fixed frequency.
        \item Currently, we do not have a proof of convergence for the adaptive step size rule.
    \end{itemize}
}

Next, we present the numerical performance of PDQP and compare it with that of SCS and OSQP.

\subsection{Experimental setup}\label{sec:experiment-setup}

{\bf Dataset.} We utilize 134 instances from the Maros–M\'{e}sz\'{a}ros test set \cite{maros1999repository}, which is a standard benchmark set for convex quadratic programming. We also collect 33 convex QP instances from QPLIB~\cite{furini2019qplib}. To compare solver performances on larger instances, the selection of instances is refined to instances with $\text{number of variables}+ \text{number of constraints} > 10000$, and 50 are selected in total to curate a medium-sized dataset. The size of the instances is summarized in Table \ref{tab:size}. Indeed, both benchmark sets are rather small, and QPLIB is relatively larger than Maros–M\'{e}sz\'{a}ros test set.

\begin{table}[ht!]
\centering
{\small
\begin{tabular}{c|ccccc}
\hline
                    
\textbf{Instance Size}  & \textless 100  & 100-1,000 & 1,000-10,000 & 10,000-100,000 & 100,000-1,000,000 \\\hline
\textbf{Maros–M\'{e}sz\'{a}ros} & 19                     & 41                         & 42       & 30  & 2  \\
\textbf{QPLIB} & 0                     & 5                         & 10      & 14  & 4 
\\ \hline
\end{tabular}
}
\caption{Scales of instances in Maros–M\'{e}sz\'{a}ros and QPLIB benchmark sets. The instance size here represents the sum of $\text{number of variables}$ and $\text{number of constraints}$. }
\label{tab:size}
\end{table}

Maros–M\'{e}sz\'{a}ros and QPLIB benchmark sets are outdated and do not contain large instances. {To complement the results on (relatively small-sized) standard QP benchmark datasets, we consider 63 large-scale synthetic instances from 7 problem classes as in \cite[Section 8]{stellato2020osqp} in Section \ref{sec:large-qp}. The problem classes include random QP, random QP with equality constraints, control problem, portfolio optimization, Huber regression, Lasso and SVM.  For each class of problems, we generated 9 different random instances, and based on their number of nonzeros in objective and constraint matrices, the instances within each class are categorized into small (nonzeros $\approx$ 300K), medium (nonzeros $\approx$ 3M) and large (nonzeros $\approx$ 30M). }

{\bf Computing environment.} We use NVIDIA H100-PCIe-80GB GPU, with CUDA 12.3, for running GPU version of solvers, and we use Intel Xeon Gold 6248R CPU 3.00GHz with 160GB RAM for running CPU-based solvers.  The experiments are performed in Julia 1.9.2.

{\bf Solvers.} PDQP.jl is implemented in an open-source Julia~\cite{bezanson2017julia} module. The GPU implementation of PDQP.jl utilizes \href{https://github.com/JuliaGPU/CUDA.jl}{CUDA.jl}~\cite{besard2018effective} as the interface for working with NVIDIA CUDA GPUs using Julia. We compare PDQP.jl with two open-sourced QP solves: SCS~\cite{o2016conic} and OSQP~\cite{stellato2020osqp},{as well as barrier method implemented in Gurobi 11.0}. Three different implementations of SCS are considered: direct method on CPU, indirect method on CPU and GPU version.

{\bf Termination.} PDQP utilizes relative KKT error~\eqref{eq:KKT} for termination, which checks primal feasibility, dual feasibility and primal-dual gap. This is almost an identical termination criteria as SCS~\cite{o2016conic,o2021operator}. On the other hand, OSQP only checks primal and dual feasibility and does not allow the user to specify a bound on the primal-dual gap. As a result, OSQP may not always provide reliable solutions (see~\cite{o2021operator} for examples and discussions on the lack of gap check of OSQP). {To confirm that the gap falls within the required tolerance for a fair comparison, we adopt the same procedure as in \cite{o2021operator}. Initially, we solved the problem using pre-set tolerances. If the solution meets the gap tolerance, we accept it. If not, we will reduce the tolerance by half and solve the problem again. This process is repeated until a solution that satisfies the gap constraint is found, and only the final solution is included in the statistics.} Notice that SCS and OSQP both utilize ADMM-based algorithms, and it was shown in \cite{o2021operator} that SCS has superior performance than OSQP for QP; thus, we focus on comparing PDQP.jl with SCS. {We set $10^{-3}$ and $10^{-6}$ tolerances for parameters \texttt{FeasibilityTol}, \texttt{OptimalityTol} and \texttt{BarConvTol} for barrier methods of Gurobi.}

{\bf Time limit.} We impose a time limit of 3600 seconds on all the instances. 

{\bf Shifted geometric mean.} We report the shifted geometric mean of solve time required across instances for each solver. More precisely, shifted geometric mean is defined as $\sqrt[N]{\prod_{i=1}^N (t_i+k)}-k$ and we shift by $k=10$. If the instance cannot be solved within time limit $3600$ seconds, we simply set $t_i=3600$.

\subsection{Results on two benchmark sets}

\begin{table}[ht!]
\centering
\begin{tabular}{ccccc}
\hline
\multicolumn{1}{l}{}                                & \multicolumn{2}{c}{\textbf{Tol 1E-03}}      & \multicolumn{2}{c}{\textbf{Tol 1E-06}}       \\ 
                                                    & \textbf{Count} & \textbf{Time} & \textbf{Count} & \textbf{Time} \\ \hline
\multicolumn{1}{c}{\textbf{PDQP (GPU)}} & 131                   & 8.06              & 122                    & 31.26              \\
\multicolumn{1}{c}{\textbf{PDQP (CPU)}}     & 131                    & 8.40               & 118                    & 26.81             \\
\multicolumn{1}{c}{\textbf{SCS (GPU)}}        & 130                    & 17.33             & 108                   & 108.47              \\
\multicolumn{1}{c}{\textbf{SCS (CPU-indirect)}}   & 130                    & 8.90              & 113                    & 64.63             \\
\multicolumn{1}{c}{\textbf{SCS (CPU-direct)}}        & 131                    & 2.75             & 126                   & 16.86              \\
\multicolumn{1}{c}{\textbf{OSQP (CPU)}}        & 125                  & 10.27             & 110                 & 44.03         \\
\hdashline
\multicolumn{1}{c}{\textbf{{ Gurobi (barrier)}}}       & {134}                  & {0.27}            & {132}                 & {1.25}  \\
\hline
\end{tabular}
\caption{Solve time in seconds and SGM10 of different solvers on 134 instances of Maros–M\'{e}sz\'{a}ros benchmark set with tolerance $10^{-3}$ and $10^{-6}$.}
\label{tab:mm}
\end{table}

Table \ref{tab:mm} presents a comparison between PDQP and other two open-source solvers SCS and OSQP on Maros–M\'{e}sz\'{a}ros benchmark set, yielding several noteworthy observations:
\begin{itemize}
    \item PDQP (CPU and GPU) has stronger performance than SCS-indirect methods (CPU and GPU) regarding solved count and solve time. In the case of high accuracy ($\epsilon=10^{-6}$), PDQP establishes an advantage over indirect version of SCS, achieving a 2.4x speedup on CPU implementation and a 3.5x speedup on GPU version.
    \item SCS (CPU-direct) has superior performance than PDQP particularly for high accuracy. This is expected as the instances in Maros–M\'{e}sz\'{a}ros benchmark set are small and factorization used in the direct method is generally cheap.
    \item {The GPU implementation of PDQP has a similar performance as the CPU implementation. This is due to the small size of benchmark instances.}
     \item OSQP has inferior performance to other solvers on Maros–M\'{e}sz\'{a}ros benchmark set when the same termination criteria are used, which is consistent with the results in~\cite{o2021operator}. 
    \item {Barrier method in Gurobi is the most efficient method, given the small sizes of the instances in these benchmark sets.}
\end{itemize}

\begin{table}[ht!]
\centering
\begin{tabular}{ccccc}
\hline
\multicolumn{1}{l}{}                                & \multicolumn{2}{c}{\textbf{Tol 1E-03}}      & \multicolumn{2}{c}{\textbf{Tol 1E-06}}       \\ 
                                                    & \textbf{Count} & \textbf{Time} & \textbf{Count} & \textbf{Time} \\ \hline
\multicolumn{1}{c}{\textbf{PDQP (GPU)}} & 28                   & 23.84              & 27                   & 49.86             \\
\multicolumn{1}{c}{\textbf{PDQP (CPU)}}     & 28                    & 35.83               & 26                    & 74.64             \\
\multicolumn{1}{c}{\textbf{SCS (GPU)}}        & 27                    & 65.41           & 23                   & 245.59             \\
\multicolumn{1}{c}{\textbf{SCS (CPU-indirect)}}   & 27                    & 56.91              & 22                    & 275.27          \\
\multicolumn{1}{c}{\textbf{SCS (CPU-direct)}}        & 28                    & 26.86            & 26                  & 67.81              \\
\multicolumn{1}{c}{\textbf{OSQP (CPU)}}        & 26                    & 66.21             & 22                & 138.97              \\
\hdashline
\multicolumn{1}{c}{\textbf{{ Gurobi (barrier)}}}       & {32}                  & {4.59}            & {32}                 & {4.77}  \\
\hline
\end{tabular}
\caption{Solve time in seconds and SGM10 of different solvers on 33 instances of QPLIB with tolerance $10^{-3}$ and $10^{-6}$.}
\label{tab:qplib}
\end{table}

Table \ref{tab:qplib} presents the result for QPLIB (whose instances are relatively larger than Maros–M\'{e}sz\'{a}ros benchmark set), here are a few observations:
\begin{itemize}
    \item PDQP (CPU and GPU) has stronger performance than SCS indirect (CPU and GPU) on QPLIB. In the case of high accuracy ($\epsilon=10^{-6}$), PDQP establishes an advantage over indirect version of SCS, achieving a 3.7x speedup on CPU implementation with 4 more instances solved and a 5x speedup on GPU version with 4 more solved. In addition, PDQP (GPU) has comparable performance to and is faster than SCS (direct) under low accuracy and high accuracy.
    \item The GPU implementation of PDQP has better performance than CPU implementation due to the (relatively) larger size of instances.
\end{itemize}

\begin{table}[ht!]
\centering
\begin{tabular}{ccccc}
\hline
\multicolumn{1}{l}{}                                & \multicolumn{2}{c}{\textbf{Tol 1E-03}}      & \multicolumn{2}{c}{\textbf{Tol 1E-06}}       \\ 
                                                    & \textbf{Count} & \textbf{Time} & \textbf{Count} & \textbf{Time} \\ \hline
\multicolumn{1}{c}{\textbf{PDQP (GPU)}} & 47                   & 18.22             & 38                   & 159.38             \\
\multicolumn{1}{c}{\textbf{PDQP (CPU)}}     & 46                    & 39.46               & 33                    & 303.83             \\
\multicolumn{1}{c}{\textbf{SCS (GPU)}}        & 42                    & 67.12             & 29                   & 603.93              \\
\multicolumn{1}{c}{\textbf{SCS (CPU-indirect)}}   & 42                    & 64.90              & 28                    & 771.07         \\
\multicolumn{1}{c}{\textbf{SCS (CPU-direct)}}        & 44                    & 23.89             & 38                  & 153.10              \\
\multicolumn{1}{c}{\textbf{OSQP (CPU)}}        & 41                  & 68.14           & 36                  & 250.47              \\
\hdashline
\multicolumn{1}{c}{\textbf{{ Gurobi (barrier)}}}       & {49}                  & {3.11}            & {47}                 & {6.82}  \\
\hline
\end{tabular}
\caption{Solve time in seconds and SGM10 of different solvers on 50 medium-sized instances with tolerance $10^{-3}$ and $10^{-6}$.}
\label{tab:medium}
\end{table}

We further refine the selection of instances to curate the medium-sized instances. The results are summarized in Table \ref{tab:medium}. We can clearly see a superior performance of GPU version of PDQP compared to its CPU version, particularly for high accuracy. This demonstrates the power of GPU implementation in solving larger-sized instances. Furthermore, we can observe that the PDQP (GPU) performs better than SCS on relatively larger instances, particularly the indirect methods. Compared to the direct method of SCS, it can also be observed that PDQP has advantages on larger instances, which is consistent with the intuition that matrix-free first-order methods can scale better than algorithms that need to perform matrix factorization.

{

\subsection{Large-scale synthetic QP}\label{sec:large-qp}

\begin{table}[ht!]
\centering
\begin{tabular}{cccccccccc}
\hline
\multirow{2}{*}{}                                   & \multicolumn{2}{c}{\begin{tabular}[c]{@{}c@{}}\textbf{Small (21)}\end{tabular}} & \multicolumn{2}{c}{\begin{tabular}[c]{@{}c@{}}\textbf{Medium (21)}\end{tabular}}    & \multicolumn{2}{c}{\begin{tabular}[c]{@{}c@{}}\textbf{\textbf{Large (21)}}\end{tabular}}   & \multicolumn{2}{c}{\textbf{Total (63)}}       \\
                                                                                                   & \textbf{Count} & \textbf{Time} & \textbf{Count} & \textbf{Time} & \textbf{Count} & \textbf{Time}  & \textbf{Count} & \textbf{Time} \\ \hline
\multicolumn{1}{c}{\textbf{PDQP (GPU)}}     & 21                   & 1.20               & 21                    & 1.94               & 21                    & 6.13 &63 &2.92             \\
\multicolumn{1}{c}{\textbf{\begin{tabular}[c]{@{}c@{}}PDQP (CPU)\end{tabular}}}               & 21                   & 3.01               & 21                    & 27.53              & 18                    & 359.31  & 60& 46.49           \\
\multicolumn{1}{c}{\textbf{\begin{tabular}[c]{@{}c@{}}SCS (GPU)\end{tabular}}} & 21                   & 2.47               & 21                    & 10.02             & 21                    & 68.54   &    63 & 16.97       \\
\multicolumn{1}{c}{\textbf{\begin{tabular}[c]{@{}c@{}}SCS (CPU-indirect)\end{tabular}}}      & 21                   & 9.39               & 21                    & 81.86               & 15                    & 700.88    & 57 & 98.18  
\\
\multicolumn{1}{c}{\textbf{\begin{tabular}[c]{@{}c@{}}SCS (CPU-direct)\end{tabular}}}      & 21                   & 1.06               & 21                    & 10.19               & 21                    & 133.52     & 63 & 21.76       \\ 
\multicolumn{1}{c}{\textbf{\begin{tabular}[c]{@{}c@{}}OSQP (CPU)\end{tabular}}}      & 21                   & 1.11               & 21                    & 11.80              & 21                    & 170.71     & 63 & 25.24       \\
\hdashline
\multicolumn{1}{c}{\textbf{\begin{tabular}[c]{@{}c@{}}{Gurobi (barrier)}\end{tabular}}}      & {21}                  & {1.02}            &{ 21}                &{9.73}          &{18}                 &{124.01}    & {60} & {20.77}      \\
\hline
\end{tabular}
\caption{Solve time in seconds and SGM10 of different solvers on instances of with tolerance $10^{-3}$.}
\label{tab:synthetic-1e-3}
\end{table}

\begin{table}[ht!]
\centering
\begin{tabular}{cccccccccc}
\hline
\multirow{2}{*}{}                                   & \multicolumn{2}{c}{\begin{tabular}[c]{@{}c@{}}\textbf{Small (21)}\end{tabular}} & \multicolumn{2}{c}{\begin{tabular}[c]{@{}c@{}}\textbf{Medium (21)}\end{tabular}}    & \multicolumn{2}{c}{\begin{tabular}[c]{@{}c@{}}\textbf{\textbf{Large (21)}}\end{tabular}}   & \multicolumn{2}{c}{\textbf{Total (63)}}       \\
                                                                                                   & \textbf{Count} & \textbf{Time} & \textbf{Count} & \textbf{Time} & \textbf{Count} & \textbf{Time}  & \textbf{Count} & \textbf{Time} \\ \hline
\multicolumn{1}{c}{\textbf{PDQP (GPU)}}     & 21                   & 2.08             & 21                    & 3.40               & 21                    & 12.17 &63 &5.31             \\
\multicolumn{1}{c}{\textbf{\begin{tabular}[c]{@{}c@{}}PDQP (CPU)\end{tabular}}}               & 21                   & 5.55             & 21                    & 54.18            & 17                   & 647.42  & 59& 76.89          \\
\multicolumn{1}{c}{\textbf{\begin{tabular}[c]{@{}c@{}}SCS (GPU)\end{tabular}}} & 21                   & 5.69               & 21                    & 24.38             & 18                    & 196.75   &    60 & 38.14       \\
\multicolumn{1}{c}{\textbf{\begin{tabular}[c]{@{}c@{}}SCS (CPU-indirect)\end{tabular}}}      & 21                   & 23.90              & 20                    & 267.27             & 12                    & 1593.95   & 53 & 237.04 
\\
\multicolumn{1}{c}{\textbf{\begin{tabular}[c]{@{}c@{}}SCS (CPU-direct)\end{tabular}}}      & 21                   & 3.09              & 21                    & 30.25              & 20                    & 395.85     & 62 & 49.79       \\ 
\multicolumn{1}{c}{\textbf{\begin{tabular}[c]{@{}c@{}}OSQP (CPU)\end{tabular}}}      & 21                   & 3.31              & 21                    & 30.70              & 21                    & 375.24     & 63 & 49.32       \\
\hdashline
\multicolumn{1}{c}{\textbf{\begin{tabular}[c]{@{}c@{}}{Gurobi (barrier)}\end{tabular}}}      & {21}                   & {1.17}               & {21}                    & {10.48}              & {18}                    & {131.85}     & {60} & {21.90}       \\
\hline
\end{tabular}
\caption{Solve time in seconds and SGM10 of different solvers on instances of with tolerance $10^{-6}$.}
\label{tab:synthetic-1e-6}
\end{table}

Table \ref{tab:synthetic-1e-3} and Table \ref{tab:synthetic-1e-6} present the comparison of PDQP with SCS and OSQP. In particular, Table \ref{tab:synthetic-1e-3} presents low accuracy results and Table \ref{tab:synthetic-1e-6} shows the high accuracy results. In case of low accuracy ($\epsilon=10^{-3}$), PDQP on GPU achieves at least 5x speedup on medium-scale instances and enjoys more significant speedup on large-scale instances. Overall, PDQP on GPU has a 5.8x speedup over the second best of all solvers, i.e., SCS on GPU. The speedup of PDQP (GPU) is even more remarkable when we seek solution with high accuracy. Moreover, significant speedup of PDQP (GPU) can also be achieved over Gurobi on large-scale instances.

\begin{figure}[ht!]
    \hspace{-1cm}
	\begin{tabular}{c c c c}
		& \includegraphics[width=0.5\textwidth]{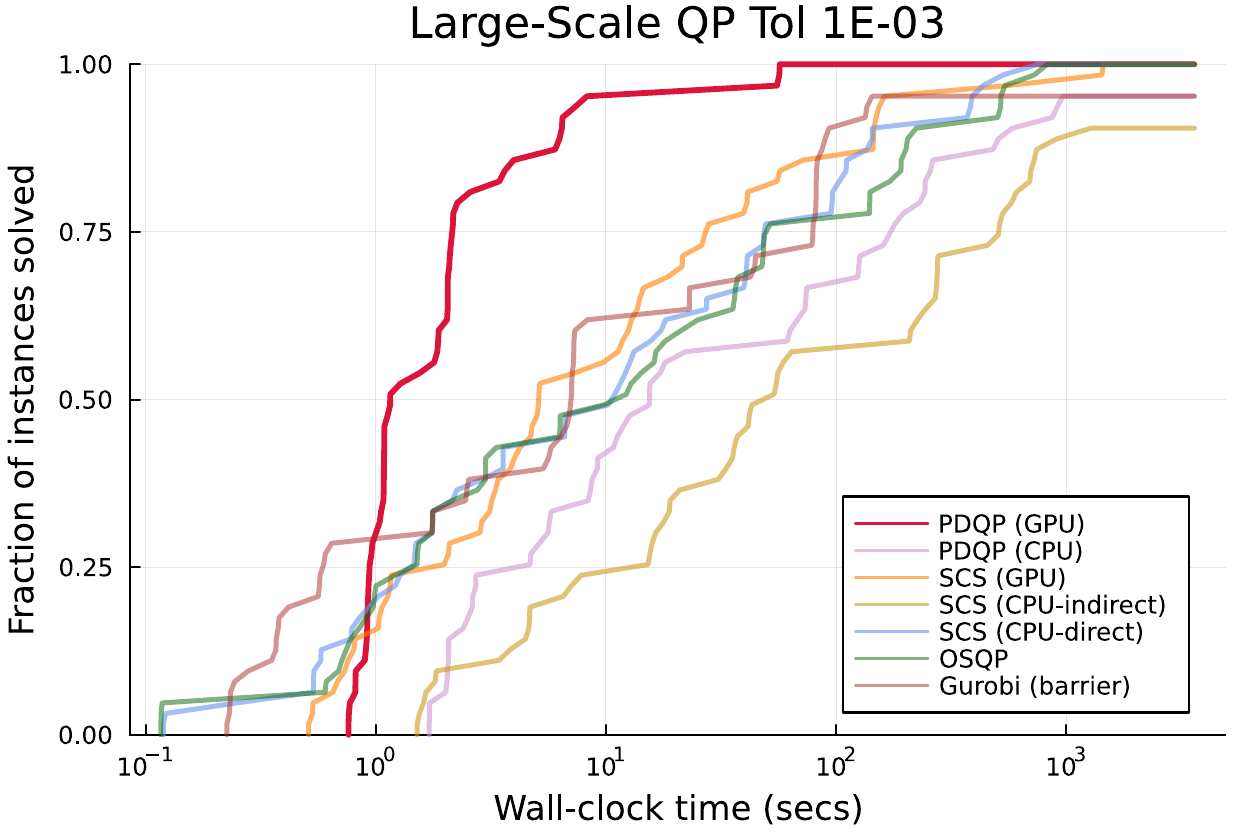}
        & \includegraphics[width=0.5\textwidth]{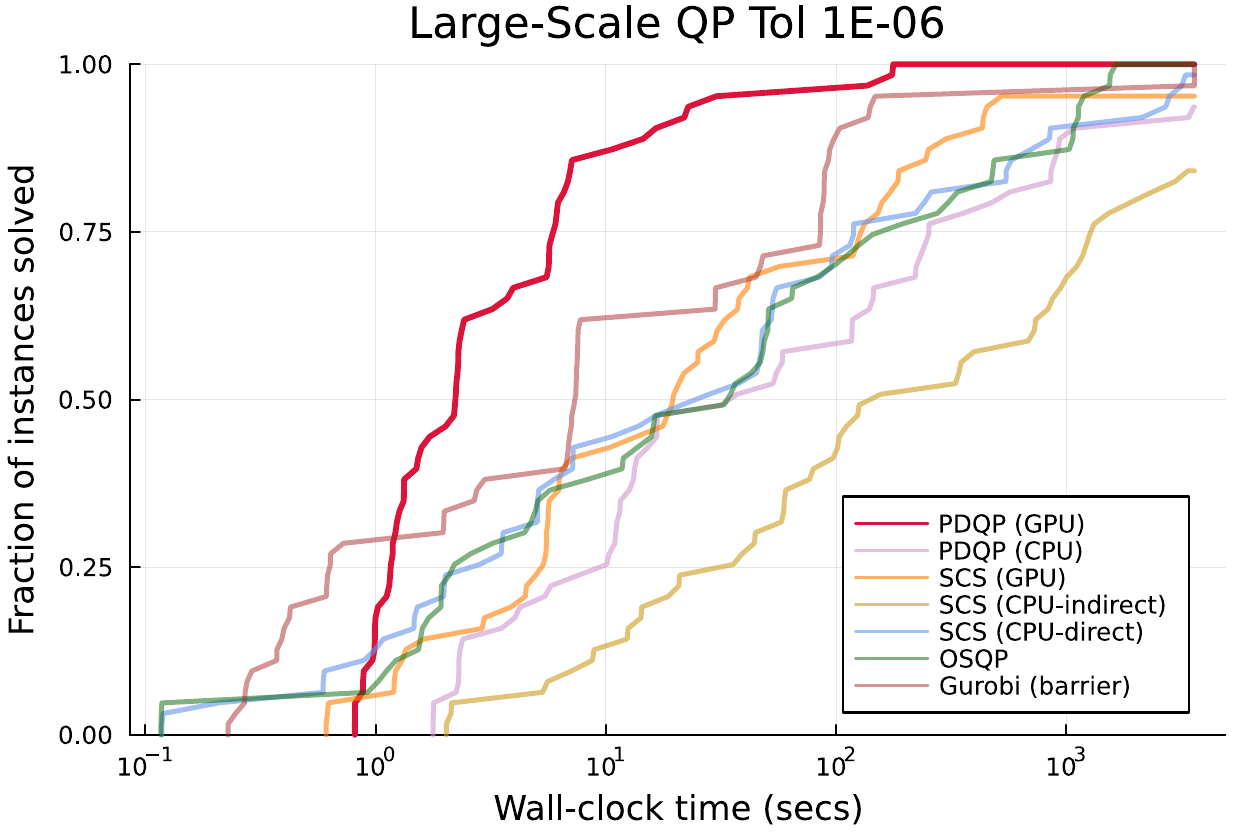}
	\end{tabular}
	\caption{Number of instances solved under low accuracy (left) and high accuracy (right).}
	\label{fig:performance}
\end{figure}

Figure \ref{fig:performance} shows the number of solved instances of PDQP, SCS and OSQP in a given time. The x-axes display the wall-clock time in seconds while the y-axes display the fraction of solved instances. As shown in both panels, PDQP on GPU solves more instances than all other solvers within one second and solves all 63 instances within around a hundred seconds. In contrast, there are several instances where both SCS and OSQP take over a thousand seconds to solve, and instances that takes several hundreds of seconds for Gurobi to solve.

In summary, the GPU-implemented PDQP consistently outperforms SCS, OSQP and Gurobi on these QP instances and PDQP (GPU) demonstrates even more pronounced advantages on instances of medium to large scale.
}

\section{Conclusion}
In this paper, we present rAPDHG, a practical and optimal first-order method for convex QP. We show that the algorithm achieves a linear convergence rate for convex QP, and present lower bound instances demonstrating no first-order methods can achieve a faster rate (up to a constant). We further develop PDQP.jl, an implementation of rAPDHG for QP in Julia with both CPU and GPU versions. Numerical experiments demonstrate its superior performance over SCS and OSQP on larger instances.

\section*{Acknowledgment}
This project is supported by AFOSR grant FA9550-24-1-0051. Haihao Lu is supported by AFOSR grant FA9550-24-1-0051 and ONR grant N000142412735. Jinwen Yang is supported by AFOSR grant FA9550-24-1-0051.

\bibliographystyle{amsplain}
\bibliography{ref-papers}

\providecommand{\bysame}{\leavevmode\hbox to3em{\hrulefill}\thinspace}
\providecommand{\MR}{\relax\ifhmode\unskip\space\fi MR }
\providecommand{\MRhref}[2]{%
  \href{http://www.ams.org/mathscinet-getitem?mr=#1}{#2}
}
\providecommand{\href}[2]{#2}
\begin{thebibliography}{10}

\bibitem{allen2017linear}
Zeyuan Allen-Zhu and Lorenzo Orecchia, \emph{Linear coupling: An ultimate unification of gradient and mirror descent}, 8th Innovations in Theoretical Computer Science Conference (ITCS 2017), vol.~67, Schloss Dagstuhl--Leibniz-Zentrum fuer Informatik, 2017, p.~3.

\bibitem{andersen2003implementing}
Erling~D Andersen, Cees Roos, and Tamas Terlaky, \emph{On implementing a primal-dual interior-point method for conic quadratic optimization}, Mathematical Programming \textbf{95} (2003), 249--277.

\bibitem{applegate2021practical}
David Applegate, Mateo D{\'\i}az, Oliver Hinder, Haihao Lu, Miles Lubin, Brendan O'Donoghue, and Warren Schudy, \emph{Practical large-scale linear programming using primal-dual hybrid gradient}, Advances in Neural Information Processing Systems \textbf{34} (2021).

\bibitem{applegate2021infeasibility}
David Applegate, Mateo D{\'\i}az, Haihao Lu, and Miles Lubin, \emph{Infeasibility detection with primal-dual hybrid gradient for large-scale linear programming}, arXiv preprint arXiv:2102.04592 (2021).

\bibitem{applegate2021faster}
David Applegate, Oliver Hinder, Haihao Lu, and Miles Lubin, \emph{Faster first-order primal-dual methods for linear programming using restarts and sharpness}, arXiv preprint arXiv:2105.12715 (2021).

\bibitem{applegate2023faster}
\bysame, \emph{Faster first-order primal-dual methods for linear programming using restarts and sharpness}, Mathematical Programming \textbf{201} (2023), no.~1-2, 133--184.

\bibitem{aps2023mosek}
MOSEK ApS, \emph{Mosek optimization suite}, 2023.

\bibitem{bartlett2002quadratic}
Roscoe~A Bartlett, Lorenz~T Biegler, Johan Backstrom, and Vipin Gopal, \emph{Quadratic programming algorithms for large-scale model predictive control}, Journal of Process Control \textbf{12} (2002), no.~7, 775--795.

\bibitem{basu2020eclipse}
Kinjal Basu, Amol Ghoting, Rahul Mazumder, and Yao Pan, \emph{Eclipse: An extreme-scale linear program solver for web-applications}, International Conference on Machine Learning, PMLR, 2020, pp.~704--714.

\bibitem{beck2009fast}
Amir Beck and Marc Teboulle, \emph{A fast iterative shrinkage-thresholding algorithm for linear inverse problems}, SIAM journal on imaging sciences \textbf{2} (2009), no.~1, 183--202.

\bibitem{besard2018effective}
Tim Besard, Christophe Foket, and Bjorn De~Sutter, \emph{Effective extensible programming: unleashing julia on gpus}, IEEE Transactions on Parallel and Distributed Systems \textbf{30} (2018), no.~4, 827--841.

\bibitem{bezanson2017julia}
Jeff Bezanson, Alan Edelman, Stefan Karpinski, and Viral~B Shah, \emph{Julia: A fresh approach to numerical computing}, SIAM review \textbf{59} (2017), no.~1, 65--98.

\bibitem{bubeck2015geometric}
S{\'e}bastien Bubeck, Yin~Tat Lee, and Mohit Singh, \emph{A geometric alternative to nesterov's accelerated gradient descent}, arXiv preprint arXiv:1506.08187 (2015).

\bibitem{chambolle2011first}
Antonin Chambolle and Thomas Pock, \emph{A first-order primal-dual algorithm for convex problems with applications to imaging}, Journal of mathematical imaging and vision \textbf{40} (2011), no.~1, 120--145.

\bibitem{chen2014optimal}
Yunmei Chen, Guanghui Lan, and Yuyuan Ouyang, \emph{Optimal primal-dual methods for a class of saddle point problems}, SIAM Journal on Optimization \textbf{24} (2014), no.~4, 1779--1814.

\bibitem{cortes1995support}
Corinna Cortes and Vladimir Vapnik, \emph{Support-vector networks}, Machine learning \textbf{20} (1995), no.~3, 273--297.

\bibitem{dantzig1998linear}
George~Bernard Dantzig, \emph{Linear programming and extensions}, vol.~48, Princeton university press, 1998.

\bibitem{deng2022new}
Qi~Deng, Qing Feng, Wenzhi Gao, Dongdong Ge, Bo~Jiang, Yuntian Jiang, Jingsong Liu, Tianhao Liu, Chenyu Xue, Yinyu Ye, et~al., \emph{New developments of admm-based interior point methods for linear programming and conic programming}, arXiv preprint arXiv:2209.01793 (2022).

\bibitem{fercoq2021quadratic}
Olivier Fercoq, \emph{Quadratic error bound of the smoothed gap and the restarted averaged primal-dual hybrid gradient},  (2021).

\bibitem{ferreau2014qpoases}
Hans~Joachim Ferreau, Christian Kirches, Andreas Potschka, Hans~Georg Bock, and Moritz Diehl, \emph{qpoases: A parametric active-set algorithm for quadratic programming}, Mathematical Programming Computation \textbf{6} (2014), 327--363.

\bibitem{freund2018new}
Robert~M Freund and Haihao Lu, \emph{New computational guarantees for solving convex optimization problems with first order methods, via a function growth condition measure}, Mathematical Programming \textbf{170} (2018), no.~2, 445--477.

\bibitem{friedlander2012primal}
Michael~P Friedlander and Dominique Orban, \emph{A primal--dual regularized interior-point method for convex quadratic programs}, Mathematical Programming Computation \textbf{4} (2012), 71--107.

\bibitem{furini2019qplib}
Fabio Furini, Emiliano Traversi, Pietro Belotti, Antonio Frangioni, Ambros Gleixner, Nick Gould, Leo Liberti, Andrea Lodi, Ruth Misener, Hans Mittelmann, et~al., \emph{Qplib: a library of quadratic programming instances}, Mathematical Programming Computation \textbf{11} (2019), 237--265.

\bibitem{goldstein2014fast}
Tom Goldstein, Brendan O'Donoghue, Simon Setzer, and Richard Baraniuk, \emph{Fast alternating direction optimization methods}, SIAM Journal on Imaging Sciences \textbf{7} (2014), no.~3, 1588--1623.

\bibitem{hermans2022qpalm}
Ben Hermans, Andreas Themelis, and Panagiotis Patrinos, \emph{Qpalm: A proximal augmented lagrangian method for nonconvex quadratic programs}, Mathematical Programming Computation \textbf{14} (2022), no.~3, 497--541.

\bibitem{johnson2013accelerating}
Rie Johnson and Tong Zhang, \emph{Accelerating stochastic gradient descent using predictive variance reduction}, Advances in neural information processing systems \textbf{26} (2013), 315--323.

\bibitem{kim2009ell_1}
Seung-Jean Kim, Kwangmoo Koh, Stephen Boyd, and Dimitry Gorinevsky, \emph{$\backslash$ell\_1 trend filtering}, SIAM review \textbf{51} (2009), no.~2, 339--360.

\bibitem{kingma2014adam}
Diederik~P Kingma and Jimmy Ba, \emph{Adam: A method for stochastic optimization}, arXiv preprint arXiv:1412.6980 (2014).

\bibitem{lin2015universal}
Hongzhou Lin, Julien Mairal, and Zaid Harchaoui, \emph{A universal catalyst for first-order optimization}, Advances in neural information processing systems, 2015, pp.~3384--3392.

\bibitem{lin2021admm}
Tianyi Lin, Shiqian Ma, Yinyu Ye, and Shuzhong Zhang, \emph{An admm-based interior-point method for large-scale linear programming}, Optimization Methods and Software \textbf{36} (2021), no.~2-3, 389--424.

\bibitem{liu2020improved}
Yanli Liu, Yuan Gao, and Wotao Yin, \emph{An improved analysis of stochastic gradient descent with momentum}, Advances in Neural Information Processing Systems \textbf{33} (2020), 18261--18271.

\bibitem{lu2021s}
Haihao Lu, \emph{An $o (s^ r)$-resolution ode framework for understanding discrete-time algorithms and applications to the linear convergence of minimax problems}, Mathematical Programming (2021), 1--52.

\bibitem{lu2023cupdlp}
Haihao Lu and Jinwen Yang, \emph{cupdlp. jl: A gpu implementation of restarted primal-dual hybrid gradient for linear programming in julia}, arXiv preprint arXiv:2311.12180 (2023).

\bibitem{lu2023cupdlpc}
Haihao Lu, Jinwen Yang, Haodong Hu, Qi~Huangfu, Jinsong Liu, Tianhao Liu, Yinyu Ye, Chuwen Zhang, and Dongdong Ge, \emph{cupdlp-c: A strengthened implementation of cupdlp for linear programming by c language}, arXiv preprint arXiv:2312.14832 (2023).

\bibitem{markowitz1950theories}
Harry~M Markowitz, \emph{Theories of uncertainty and financial behavior}, Econometrica \textbf{19} (1950), no.~7, 1.

\bibitem{markowitz1967portfolio}
Harry~M Markowitz and Harry~M Markowitz, \emph{Portfolio selection: efficient diversification of investments}, J. Wiley, 1967.

\bibitem{maros1999repository}
Istvan Maros and Csaba M{\'e}sz{\'a}ros, \emph{A repository of convex quadratic programming problems}, Optimization methods and software \textbf{11} (1999), no.~1-4, 671--681.

\bibitem{blog}
Vahab Mirrokni, \emph{Google research, 2022 \& beyond: Algorithmic advances}, \url{https://ai.googleblog.com/2023/02/google-research-2022-beyond-algorithmic.html}, 2023-02-10.

\bibitem{nemirovskii1985optimal}
Arkadi~S Nemirovski and Yurii~E Nesterov, \emph{Optimal methods of smooth convex minimization}, USSR Computational Mathematics and Mathematical Physics \textbf{25} (1985), no.~2, 21--30.

\bibitem{nemirovskij1983problem}
Arkadij~Semenovi{\v{c}} Nemirovskij and David~Borisovich Yudin, \emph{Problem complexity and method efficiency in optimization},  (1983).

\bibitem{nesterov2003introductory}
Yurii Nesterov, \emph{Introductory lectures on convex optimization: A basic course}, vol.~87, Springer Science \& Business Media, 2003.

\bibitem{nesterov1983method}
Yurii~E Nesterov, \emph{A method for solving the convex programming problem with convergence rate ${O} (1/k^2)$}, Soviet Mathematics Doklady, vol.~27, 1983, p.~372–376.

\bibitem{o2021operator}
Brendan O'Donoghue, \emph{Operator splitting for a homogeneous embedding of the linear complementarity problem}, SIAM Journal on Optimization \textbf{31} (2021), no.~3, 1999--2023.

\bibitem{o2015adaptive}
Brendan O'Donoghue and Emmanuel Candes, \emph{Adaptive restart for accelerated gradient schemes}, Foundations of computational mathematics \textbf{15} (2015), no.~3, 715--732.

\bibitem{o2016conic}
Brendan O'Donoghue, Eric Chu, Neal Parikh, and Stephen Boyd, \emph{Conic optimization via operator splitting and homogeneous self-dual embedding}, Journal of Optimization Theory and Applications \textbf{169} (2016), no.~3, 1042--1068.

\bibitem{optimization2023gurobi}
Gurobi Optimization et~al., \emph{Gurobi optimizer reference manual}, 2023.

\bibitem{ouyang2015accelerated}
Yuyuan Ouyang, Yunmei Chen, Guanghui Lan, and Eduardo Pasiliao~Jr, \emph{An accelerated linearized alternating direction method of multipliers}, SIAM Journal on Imaging Sciences \textbf{8} (2015), no.~1, 644--681.

\bibitem{pock2011diagonal}
Thomas Pock and Antonin Chambolle, \emph{Diagonal preconditioning for first order primal-dual algorithms in convex optimization}, 2011 International Conference on Computer Vision, IEEE, 2011, pp.~1762--1769.

\bibitem{pokutta2020restarting}
Sebastian Pokutta, \emph{Restarting algorithms: Sometimes there is free lunch}, International Conference on Integration of Constraint Programming, Artificial Intelligence, and Operations Research, Springer, 2020, pp.~22--38.

\bibitem{polyak1964some}
Boris~T Polyak, \emph{Some methods of speeding up the convergence of iteration methods}, Ussr computational mathematics and mathematical physics \textbf{4} (1964), no.~5, 1--17.

\bibitem{rockafellar1987linear}
R~Tyrell Rockafellar, \emph{Linear-quadratic programming and optimal control}, SIAM Journal on Control and Optimization \textbf{25} (1987), no.~3, 781--814.

\bibitem{roulet2020sharpness}
Vincent Roulet and Alexandre d'Aspremont, \emph{Sharpness, restart, and acceleration}, SIAM Journal on Optimization \textbf{30} (2020), no.~1, 262--289.

\bibitem{ruiz2001scaling}
Daniel Ruiz, \emph{A scaling algorithm to equilibrate both rows and columns norms in matrices}, Tech. report, CM-P00040415, 2001.

\bibitem{stellato2020osqp}
Bartolomeo Stellato, Goran Banjac, Paul Goulart, Alberto Bemporad, and Stephen Boyd, \emph{Osqp: An operator splitting solver for quadratic programs}, Mathematical Programming Computation \textbf{12} (2020), no.~4, 637--672.

\bibitem{su2014differential}
Weijie Su, Stephen Boyd, and Emmanuel Candes, \emph{A differential equation for modeling nesterov’s accelerated gradient method: Theory and insights}, Advances in Neural Information Processing Systems, 2014, pp.~2510--2518.

\bibitem{sutskever2013importance}
Ilya Sutskever, James Martens, George Dahl, and Geoffrey Hinton, \emph{On the importance of initialization and momentum in deep learning}, International conference on machine learning, PMLR, 2013, pp.~1139--1147.

\bibitem{tang2018rest}
Junqi Tang, Mohammad Golbabaee, Francis Bach, et~al., \emph{Rest-katyusha: exploiting the solution's structure via scheduled restart schemes}, Advances in Neural Information Processing Systems, 2018, pp.~429--440.

\bibitem{tibshirani1996regression}
Robert Tibshirani, \emph{Regression shrinkage and selection via the lasso}, Journal of the Royal Statistical Society: Series B (Methodological) \textbf{58} (1996), no.~1, 267--288.

\bibitem{van2018large}
Wim Van~Ackooij, Irene Danti~Lopez, Antonio Frangioni, Fabrizio Lacalandra, and Milad Tahanan, \emph{Large-scale unit commitment under uncertainty: an updated literature survey}, Annals of Operations Research \textbf{271} (2018), no.~1, 11--85.

\bibitem{vanderbei1999loqo}
Robert~J Vanderbei, \emph{Loqo: An interior point code for quadratic programming}, Optimization methods and software \textbf{11} (1999), no.~1-4, 451--484.

\bibitem{xu2017accelerated}
Yangyang Xu, \emph{Accelerated first-order primal-dual proximal methods for linearly constrained composite convex programming}, SIAM Journal on Optimization \textbf{27} (2017), no.~3, 1459--1484.

\bibitem{yang2018rsg}
Tianbao Yang and Qihang Lin, \emph{{RSG}: Beating subgradient method without smoothness and strong convexity}, The Journal of Machine Learning Research \textbf{19} (2018), no.~1, 236--268.

\bibitem{zhao2022accelerated}
Renbo Zhao, \emph{Accelerated stochastic algorithms for convex-concave saddle-point problems}, Mathematics of Operations Research \textbf{47} (2022), no.~2, 1443--1473.

\end{thebibliography}

\end{document}